\pgfplotsset{compat=1.10}
\def\eps{\varepsilon}
\def\N{\mathbb{N}}
\def\R{\mathbb{R}}
\def\loc{\mathrm{loc}}
\let\div\relax
\DeclareMathOperator{\div}{div}
\DeclareMathOperator{\dist}{dist}
\newcommand{\pa}{\partial}
\newtheorem{proposition}{Proposition}[section]
\newtheorem{theorem}[proposition]{Theorem}
\newtheorem{corollary}[proposition]{Corollary}
\newtheorem{lemma}[proposition]{Lemma}
\theoremstyle{definition}
\newtheorem{remark}[proposition]{Remark}
\numberwithin{equation}{section}
\newtheorem*{theorem_a}{Theorem A}
\newcommand{\mf}[1]{ \mathbf{#1}}
\newcommand{\supp}{\mathrm{supp}}
\title{Free boundary problems with long-range interactions:\\ uniform Lipschitz estimates in the radius}
\author[N. Soave]{Nicola Soave} \thanks{N.~Soave is partially supported by the INdAM-GNAMPA group (Italy).}
\address{Nicola Soave \newline \indent
	Dipartimento di Matematica,  Politecnico di Milano,  \newline \indent
	Via Edoardo Bonardi 9, 20133 Milano, Italy}
\email{nicola.soave@polimi.it}
\author[H. Tavares]{Hugo Tavares}  \thanks{H.~Tavares is partially supported by the Portuguese government through FCT-Funda\c c\~ao para a Ci\^encia e a Tecnologia, I.P., under the projects UID/MAT/04459/2020, PTDC/MAT-PUR/28686/2017 and PTDC/MAT-PUR/1788/2020.}
\address{Hugo Tavares \newline \indent CAMGSD and Mathematics Department,
\newline \indent Instituto Superior T\'ecnico, Universidade de Lisboa   \newline \indent
Av. Rovisco Pais, 1049-001 Lisboa, Portugal}
\email{hugo.n.tavares@tecnico.ulisboa.pt}
\author[A. Zilio]{Alessandro Zilio}\thanks{A.~Zilio is partially supported by the project ANR-18-CE40-0013 SHAPO financed by the French Agence Nationale de la Recherche (ANR)}
\address{Alessandro Zilio \newline \indent Universit\'e de Paris and Sorbonne Universit\'e, CNRS, \newline \indent Laboratoire Jacques-Louis Lions (LJLL), F-75006 Paris, France}
\email{azilio@math.univ-paris-diderot.fr, alessandro.zilio@u-paris.fr}
\begin{document}

\begin{abstract}
Consider the class of optimal partition problems with long range interactions
\[
\inf \left\{ \sum_{i=1}^k \lambda_1(\omega_i):\  (\omega_1,\ldots, \omega_k) \in \mathcal{P}_r(\Omega) \right\},
\]
where $\lambda_1(\cdot)$ denotes the first Dirichlet eigenvalue, and $\mathcal{P}_r(\Omega)$ is the set of open $k$-partitions of $\Omega$ whose elements are at distance at least $r$: $\dist(\omega_i,\omega_j)\geq r$ for every $i\neq j$. In this paper we prove optimal uniform bounds (as $r\to 0^+$) in $\mathrm{Lip}$--norm for the associated $L^2$--normalized eigenfunctions, connecting in particular the nonlocal case $r>0$ with the local one $r \to 0^+$.

The proof uses new pointwise estimates for eigenfunctions, a one-phase Alt-Caffarelli-Friedman and the Caffarelli-Jerison-Kenig monotonicity formulas, combined with elliptic and energy estimates. Our result extends to other contexts, such as singularly perturbed harmonic maps with distance constraints.
\end{abstract}

\date{\today}
\subjclass[2020]{35J20, 35P99, 35R35, 49Q10}
\keywords{Dirichlet integral, harmonic functions, Laplacian eigenvalues, Lipschitz estimates, long range interactions, optimal partition problems, optimal regularity, segregation phenomena}

\maketitle
\section{Introduction}

The purpose of this paper is to investigate uniform regularity estimates for a family of long-range (nonlocal) interaction problems. Let $\Omega$ be a smooth bounded domain of $\R^N$, $N \ge 2$ and $k \ge 2$ be integers. Given $r \ge 0$, we consider the set of all $k$-partitions of $\Omega$ whose elements are at distance at least $r$:
\[
\mathcal{P}_r(\Omega)=\left\{  (\omega_1,\ldots, \omega_k)\left| \begin{array}{l} \omega_i \subset \Omega \text{ is a nonempty open set for all $i$} ,\\ \omega_i \cap \omega_j = \emptyset \quad \text{and} \quad  \dist(\omega_i,\omega_j)\geq r\ \forall i\neq j\end{array}\right.\right\}
\]
(notice that the request that $\omega_i \cap \omega_j = \emptyset$ is redundant for $r>0$, but not for $r=0$). It is plain that there exists $\bar r>0$ such that $\mathcal{P}_r(\Omega) \neq \emptyset$, for every $r\in [0,\bar r)$. For any such $r$, we are concerned with the following optimization problem:
\begin{equation}\label{eqn eig dist}
c_r:=\inf \left\{ \sum_{i=1}^k \lambda_1(\omega_i):\  (\omega_1,\ldots, \omega_k) \in \mathcal{P}_r(\Omega) \right\},
\end{equation}
where $\lambda_1(\cdot)$ denotes the first Dirichlet eigenvalue. 

The short-range (local) case, corresponding to the choice $r=0$, is a typical example of optimal partition problem, a very active topic of research since the seminal paper \cite{BucurButtazzoHenrot}. Existence and properties of minimizers for $c_0$ are essentially understood: we collect in the following theorem what has been proved in \cite{CafLin06, CTVfuc, TT} (see also \cite{HHHT} and \cite{RTT}). 

\begin{theorem_a}\label{thm:local}
The optimal value $c_0$ is attained by a minimal partition $(\Omega_{1,0},\dots,\Omega_{k,0})$ which exhausts $\Omega$, in the sense that $\bigcup_i \overline{\Omega_{i,0}} = \overline{\Omega}$; moreover, the free boundary $\bigcup_i \partial \Omega_{i,0}$ consists of piece-wise $C^{1,\alpha}$-hypersurfaces of dimension $N-1$, up to a singular set of dimension $N-2$ (the singular set is actually discrete in dimension $N=2$). Finally, the  eigenfunctions $u_{i,0}$ associated with $\Omega_{i,0}$ are globally Lipschitz continuous, which is the optimal regularity in this case.
\end{theorem_a}
Finer results for the singular set are proved in the recent paper \cite{Alp}.

Much less is known in the nonlocal case $r>0$. In a joint paper with S. Terracini \cite{STTZ2018} (see Theorem 1.2 and Theorem 1.3-(3), (6) therein), we have shown the following properties.
\begin{enumerate}
\item \textit{Existence.} The level $c_r$ is achieved by an open optimal partition $(\Omega_{1,r},\ldots, \Omega_{k,r})$;
\item \textit{Exterior sphere condition and exact distance between the optimal sets.} Given $x_0\in \partial \Omega_{i,r} \setminus \partial \Omega$, there exists $j\neq i$ and $y_0\in \partial \Omega_{j,r}$ such that $|x_0-y_0|=r$, and $\Omega_{i,r} \cap B_r(y_0) = \emptyset$; in particular, $\dist(\Omega_{i,r},\Omega_{j,r})=r$ and each set $\Omega_{i,r}$ satisfies an exterior sphere condition of radius $r$ at any of its boundary point.
\end{enumerate}
The second statement together with \cite[Lemma 6.4]{CPQ} yields:
\begin{enumerate}
\item[(3)] \textit{Measure of the Free Boundary.} The sets $\partial \Omega_{i,r}$ have locally finite perimeter in $\Omega$. 
\end{enumerate}

The approach used both in the local \cite{CafLin06, CTVfuc, TT} and in the nonlocal case \cite{STTZ2018} consists in studying the following relaxed formulation of $c_r$ in terms of measurable functions rather than sets:
\begin{equation}\label{eq:weak_characterization}
\tilde c_r=\inf\left\{ \sum_{i=1}^k \int_\Omega |\nabla u_i|^2\left| \begin{array}{l} u_i \in H^1_0(\Omega), \ \int_\Omega u_i^2=1\ \forall i, \\ \int_{\Omega} u_i^2 u_j^2 = 0 \text{ and } \dist(\text{supp}\, u_i,\text{supp}\, u_j)\geq r,\ \forall i\neq j \end{array}\right. \right\}.
\end{equation}
It is shown that there exists a minimizer $\mf{u}_r = (u_{1,r}, \dots, u_{k,r})$ for $\tilde c_r$. Moreover:
\begin{enumerate}
\item[(a)] \textit{Optimal regularity.} Each $u_{i,r}$ is Lipschitz continuous in $\overline \Omega$. In particular, the positivity sets $\Omega_{i,r}:=\{u_{i,r}>0\}$ are open and $(\Omega_{1,r}, \dots, \Omega_{k,r})\in \mathcal{P}_r(\Omega)$;
\item[(b)] \textit{Equation of $u_{i,r}$.} $-\Delta u_{i,r}=\lambda_1(\Omega_{i,r})u_{i,r}$ in $\Omega_{i,r}$. The partition $(\Omega_{1,r},\ldots, \Omega_{2,r})$ achieves $c_r$, which coincides with $\tilde c_r$, and satisfies conditions (1)--(3).
\end{enumerate}
Under an additional regularity assumption of the free boundary $\partial \Omega_i$, we have also derived a free boundary condition, satisfied by the eigenfunctions of the optimal partitions (see \cite[Theorem 1.6]{STTZ2018}). The validity of such a condition remains a crucial open problem in the general setting for optimal partition problems with a distance constraint.

The techniques adopted in the local and nonlocal cases are completely different. Powerful tools typically employed in the former ones, such as monotonicity formulas, free boundary conditions and blow-up methods, cannot be adapted in the context of optimal partitions at distance, due to the nonlocal nature of the interaction between different densities/sets. This is why the free boundary regularity for problem $c_0$ is settled, while the same problem for $c_r$ is open. However, the common optimal Lipschitz regularity of $\mf{u}_r$ suggests that it should be possible to look at both problems, the local and the nonlocal ones, as a $1$-parameter family, where the parameter is the distance $r$ between the different supports. The main results of this paper establish that this is possible, at least at the level of the eigenfunctions. More precisely:

\begin{theorem}\label{thm:unif Lip}
There exists a constant $C > 0$ such that 
 \[
	\|\mathbf{u}_r\|_{\mathrm{Lip}(\overline \Omega)}:=\|\mathbf{u}_r\|_{L^\infty(\Omega)}+\| \nabla \mathbf{u}_r\|_{L^\infty(\Omega)} \leq C,
	\]
for any $0< r < \bar r$, and any minimizer $\mathbf{u}_r$ of $c_r$. \end{theorem}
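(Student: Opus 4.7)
The plan is to prove the theorem by contradiction via a blow-up/compactness argument, with monotonicity formulas closing the case analysis. First I would separate the estimate into an $L^\infty$ bound and a gradient bound. The $L^\infty$ bound is the easier half: comparison of $c_r$ with any fixed $r=0$ admissible partition gives $c_r \le c_0 + O(1)$ as $r \to 0^+$, whence $\lambda_{i,r}:=\lambda_1(\Omega_{i,r})\le C$ uniformly in $r$; Moser iteration on $-\Delta u_{i,r}=\lambda_{i,r}u_{i,r}$ in $\Omega_{i,r}$, together with the $L^2$-normalization, then yields $\|u_{i,r}\|_{L^\infty}\le C$ independently of $r$.

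For the gradient bound I argue by contradiction: suppose there exist sequences $r_n\in(0,\bar r)$, indices $i_n$, and points $x_n\in\overline\Omega$ with
\[
L_n:=|\nabla u_{i_n,r_n}(x_n)|=\|\nabla \mathbf{u}_{r_n}\|_{L^\infty(\Omega)}\to+\infty.
\]
Set $v_{j,n}(y):=u_{j,r_n}(x_n+y/L_n)$ on the rescaled domain $\Omega_n:=L_n(\Omega-x_n)$. Then $|\nabla v_{i_n,n}(0)|=1$, one has uniform $L^\infty$ bounds $\|v_{j,n}\|_{L^\infty}\le C$, the equation $-\Delta v_{j,n}=L_n^{-2}\lambda_{j,n}v_{j,n}$ holds in $\{v_{j,n}>0\}$, and the distance constraint rescales to $\dist(\supp v_{j,n},\supp v_{\ell,n})\ge \rho_n:=r_n L_n$; moreover the exterior sphere condition from item (2) above survives at radius $\rho_n$. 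Along a subsequence, $\rho_n\to\rho_\infty\in[0,+\infty]$ and $\Omega_n$ invades $\R^N$ or a half-space.

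Passing to the limit requires $C^{0,1}_{\loc}$-compactness of $(\mathbf{v}_n)$, which is delicate because the Lipschitz bound is precisely what is being proved. The remedy is a family of uniform pointwise gradient estimates derived from the one-phase Alt--Caffarelli--Friedman formula applied componentwise, together with the Caffarelli--Jerison--Kenig monotonicity formula (which tolerates the vanishing right-hand side $L_n^{-2}\lambda_{j,n} v_{j,n}$). Once local compactness is secured, the limit $\mathbf{v}_\infty$ consists of nonnegative harmonic functions on disjoint open sets at mutual distance $\rho_\infty$, with $|\nabla v_{i_\infty,\infty}(0)|=1$. I then conclude in each regime: if $\rho_\infty=+\infty$, only one phase survives on each compact set, producing a nonnegative harmonic function of at most linear growth on $\R^N$ or a half-space, ruled out by a Liouville argument; if $\rho_\infty=0$ one recovers a local segregation and the classical two-phase ACF formula forces $|\nabla v_{i_\infty,\infty}(0)|$ to be controlled by quantities that vanish in the limit; the intermediate case combines the exterior sphere of radius $\rho_\infty$ with the one-phase ACF inside each component to obtain the same contradiction.

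The main obstacle---and the genuine novelty---is expected to be the intermediate regime $\rho_\infty\in(0,+\infty)$, which falls outside both the classical local theory (where distinct phases touch at a common point) and the pure one-phase free boundary setting. The technical heart of the argument should therefore be the derivation of sharp pointwise gradient estimates that interpolate correctly between the two extreme regimes, together with the assembly of the one-phase ACF and CJK monotonicity formulas across the exterior balls of radius $\rho_n$; the endpoints $\rho_\infty\in\{0,+\infty\}$ should reduce to essentially classical arguments once this compactness machinery is in place.
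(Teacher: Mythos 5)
Your proposal follows the classical ``blow up at scale $1/L_n$ and classify the limit'' scheme, but the decisive steps in each of your three regimes do not actually close. In the regime $\rho_\infty=+\infty$ the limit is not a globally harmonic function: it is nonnegative, harmonic only in its positivity set, and the exterior spheres of radius $\rho_n\to\infty$ force the zero set to converge to a half-space (or to be empty), so the natural limit profile is $c\,x_N^+$ with $|\nabla v_\infty(0)|=1$ --- a perfectly admissible configuration that no Liouville theorem excludes. In the regime $\rho_\infty=0$, the CJK formula bounds the \emph{product} of the averaged Dirichlet energies of two distinct components; it yields decay of the first component's gradient only when the competing phase is energetically non-degenerate near the point, which is exactly what fails here (in the actual scenario the other components' local energies vanish, so the CJK bound is vacuous). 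Even in the purely local case $r=0$ the optimal Lipschitz bound is not a consequence of ACF/CJK alone --- this is why \cite{SoZi} and \cite{CafLin06} are nontrivial. Finally, for the intermediate regime you explicitly defer the argument (``the technical heart should therefore be\ldots''), so the case that is the whole point of the theorem is left open. There is also a technical issue common to all cases: only $C^{0,\alpha}_{\loc}\cap H^1_{\loc}$ compactness is available, and since the maximal-gradient points may collapse onto the free boundary after rescaling, you cannot pass $|\nabla v_{i_\infty,n}(0)|=1$ to the limit.

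The paper's proof runs on a different engine, which your proposal does not contain: minimality is used quantitatively, not a classification of blow-up limits. One first shows (Lemma \ref{lem max bound}) that a near-maximal gradient point $x_n$ of the worst component lies within $o(r_n)$ of its free boundary, then rescales by $r_n$ (not by $1/M_n$), so that the distance constraint becomes $1$ and each support keeps an exterior sphere of radius $1$. The mean-value property (Corollary \ref{cor mean}) and the one-phase ACF-type monotonicity (Proposition \ref{lem acf one eig}, Corollary \ref{cor acf one eig}) convert the pointwise bound $M_n$ into a lower bound for the averaged energy of $v_{1,n}$ at unit scale, while CJK shows the other components' local energies are $O(M_n^{-4})$, hence negligible. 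The contradiction is then obtained by an explicit competitor: cut off $v_{j,n}$, $j\ge 2$, at cost $O(\delta_n)$, and replace $v_{1,n}$ in a fixed ball by the minimizer of the constrained Dirichlet problem \eqref{var car bar v}; since $v_{1,n}$ vanishes on an exterior unit ball it cannot be an eigenfunction of the ball, so the Rayleigh quotient improves by a fixed relative $\eps$ (Lemma \ref{lem: en v_1}), and this gain, of order $\eps M_n^4\delta_n$, beats the $O(\delta_n)$ loss, contradicting minimality. Without an ingredient of this kind (an energy comparison or a free boundary condition), the blow-up classification you outline cannot rule out the legitimate limiting profiles, so the proposal has a genuine gap.
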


Observe that, for each $r>0$ fixed, Lipschitz regularity is proved via a barrier argument, which is possible due to the exterior sphere condition (see \cite[Theorem 3.4]{STTZ2018}). However the barrier used depends on the radius, and the argument breaks down as $r\to 0^+$. Here we rely on different methods.

Combining this theorem with the information obtained in previous papers about the local case $r=0$, we have the following. Finer results for the singular set are proved in the recent paper \cite{Alp}.

\begin{corollary}\label{coro:unif Lip}
There exists $C>0$ such that
\[
	c_0 \leq c_r \leq c_0 + C r \quad \text{ for sufficiently small $r>0$.}
	\]
In particular, $c_r\to c_0$ as $r\to 0$. Moreover, given any minimizer $\mathbf{u}_r$ of $c_r$ for $r>0$,  there exists $\mathbf{u}_0\in H^1_0(\Omega)\cap \mathrm{Lip}(\overline \Omega)$, solution to $c_0$, such that, up to a subsequence, 
\[
\mathbf{u}_r\to \mathbf{u}_0 \quad \text{ strongly in } H^1_0(\Omega)\cap C^{0,\alpha}(\overline \Omega), \text{ for every } \alpha\in (0,1).
\]

\end{corollary}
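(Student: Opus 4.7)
\emph{Overall strategy.} The plan is to establish the sandwich $c_0 \leq c_r \leq c_0 + Cr$ first, and then extract subsequential limits via Theorem~\ref{thm:unif Lip}. The lower bound is immediate from \eqref{eq:weak_characterization}: any $\mathbf{u}$ admissible for $\tilde c_r$ is a fortiori admissible for $\tilde c_0$. For the upper bound I would exhibit, starting from a minimizer $\mathbf{u}_0=(u_{1,0},\ldots,u_{k,0})$ of $c_0$ (Theorem A), an explicit competitor for $\tilde c_r$ by shrinking each component with a Lipschitz cutoff of thickness $O(r)$. The simplest choice is
\[
v_{i,r} := (u_{i,0} - Lr)_+, \qquad L := \max_i \|u_{i,0}\|_{\mathrm{Lip}(\overline \Omega)}.
\]
One checks that $v_{i,r} \in H^1_0(\Omega)$ (it vanishes near $\partial \Omega$ for $r>0$), and that the disjointness of the positivity sets $\Omega_{i,0}$ combined with the $L$-Lipschitz continuity of each $u_{j,0}$ forces $\dist(\supp v_{i,r}, \supp v_{j,r}) \geq r$: any $x \in \supp v_{i,r}$ lies in $\Omega_{i,0}$, hence $u_{j,0}(x)=0$, so $u_{j,0}(y)\geq Lr$ enforces $|y-x|\geq r$.

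\emph{Upper bound.} The pointwise estimate $|\nabla v_{i,r}|\leq |\nabla u_{i,0}|$ gives $\int |\nabla v_{i,r}|^2\leq \lambda_1(\Omega_{i,0})$, while splitting $\int(u_{i,0}^2 - v_{i,r}^2)$ over $\{u_{i,0}\leq Lr\}$ and $\{u_{i,0}>Lr\}$ and using $\|u_{i,0}\|_{L^2}=1$ yields $\|v_{i,r}\|_{L^2}^2 = 1 - O(r)$. Normalizing, $w_{i,r} := v_{i,r}/\|v_{i,r}\|_{L^2}$ is an admissible test for $\tilde c_r$ with $\sum_i \int|\nabla w_{i,r}|^2 \leq c_0/(1-Cr) \leq c_0 + C'r$ for $r$ small, proving the first claim and in particular that $c_r\to c_0$.

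\emph{Passage to the limit.} Let $r_n\to 0^+$ and let $\mathbf{u}_{r_n}$ be a sequence of minimizers. Theorem~\ref{thm:unif Lip} yields uniform boundedness and equicontinuity on $\overline \Omega$, together with a uniform $H^1_0$ bound from $\int|\nabla u_{i,r_n}|^2\leq c_{r_n}\leq C$. Arzela--Ascoli and reflexivity give, up to a subsequence, $u_{i,r_n}\to u_i$ uniformly on $\overline \Omega$ and weakly in $H^1_0(\Omega)$. The constraints $u_{i,r_n} u_{j,r_n}\equiv 0$ and $\|u_{i,r_n}\|_{L^2}=1$ transfer to the limit by uniform convergence, so $\mathbf{u}_0 := (u_i)$ is admissible for $\tilde c_0$. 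Weak lower semicontinuity combined with the sandwich gives
\[
c_0 \leq \sum_i \int|\nabla u_i|^2 \leq \liminf_n \sum_i \int|\nabla u_{i,r_n}|^2 = c_0,
\]
so $\mathbf{u}_0$ achieves $c_0$. Because the total gradient norm converges and each term is individually lower semicontinuous, each $\int|\nabla u_{i,r_n}|^2$ converges to $\int|\nabla u_i|^2$, upgrading weak to strong $H^1_0$ convergence. Finally, the uniform Lipschitz bound interpolates with the uniform convergence to yield
\[
\|u_{i,r_n} - u_i\|_{C^{0,\alpha}(\overline \Omega)} \leq C \|u_{i,r_n} - u_i\|_{L^\infty(\overline\Omega)}^{1-\alpha} \to 0
\]
for every $\alpha\in(0,1)$. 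The only delicate step is the $O(r)$ competitor construction; everything else is standard compactness enabled by Theorem~\ref{thm:unif Lip}.
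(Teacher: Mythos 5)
Your argument is correct, and while the compactness/limit part coincides with the paper's proof of the corollary (uniform Lipschitz bound from Theorem \ref{thm:unif Lip}, Ascoli--Arzel\`a plus weak $H^1_0$ convergence, transfer of the constraints, lower semicontinuity against $c_r\to c_0$, and norm convergence upgrading weak to strong $H^1_0$ convergence, with $C^{0,\alpha}$ convergence by interpolation), your proof of the key estimate $c_r\le c_0+Cr$ takes a genuinely different and more elementary route than the paper's Lemma \ref{eq:estimate}. The paper builds the competitor by multiplying a minimizer $\mathbf{u}_0$ of $c_0$ by a cutoff $\eta$ vanishing in the $r/2$-tubular neighborhood of the nodal set $\mathcal{N}$, with $|\nabla\eta|\le 2/r$; the $O(r)$ energy and mass losses then rest on the free-boundary structure from Theorem A, namely that $\mathcal{N}$ is $(N-1)$-rectifiable with finite $\mathcal{H}^{N-1}$ measure, so that by Federer's Minkowski-content theorem $|\mathcal{N}_r|\le Cr\,\mathcal{H}^{N-1}(\mathcal{N})$, together with the bound $u_{i,0}(x)\le K\dist(x,\mathcal{N})$. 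Your vertical truncation $v_{i,r}=(u_{i,0}-Lr)_+$ uses only the global Lipschitz continuity of $\mathbf{u}_0$: the distance constraint $\dist(\supp v_{i,r},\supp v_{j,r})\ge r$ follows from the elementary estimate $Lr\le u_{j,0}(y)=u_{j,0}(y)-u_{j,0}(x)\le L|y-x|$, the Dirichlet energy can only decrease, and the mass loss is $O(r)$ since $\int_\Omega\bigl(u_{i,0}^2-v_{i,r}^2\bigr)\le 2Lr\int_\Omega u_{i,0}+L^2r^2|\Omega|$. Thus your approach bypasses the rectifiability and Minkowski-content machinery entirely, at the (purely cosmetic) cost of a less geometric constant: the paper's argument yields an upper bound whose leading coefficient is explicitly proportional to $K^2\mathcal{H}^{N-1}(\mathcal{N})$, i.e.\ it quantifies the energy cost of the distance constraint in terms of the measure of the interface, whereas yours involves $L$, $|\Omega|$ and $c_0$. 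Both give the same $O(r)$ rate, and the small technical points you leave implicit (that $(u_{i,0}-Lr)_+\in H^1_0(\Omega)$ for $r>0$, and that the support inclusions $\supp v_{i,r}\subset\{u_{i,0}\ge Lr\}\subset\Omega_{i,0}$ persist after taking closures) are routine.
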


We believe that these results may pave the way to the development of a common free boundary regularity theory. This will be the object of future investigations.

\medbreak

A closely related problem concerns the regularity of singularly perturbed harmonic maps and of their free boundaries. Under the previous assumptions on $\Omega$, let 
\[
\Omega_{\bar r}=\bigcup_{x \in \Omega} B_{\bar r}(x)=\{x\in \R^N:\ \dist (x,\Omega)<\bar r\},
\] 
and, given $k \ge 2$ nonnegative nontrivial functions $f_1,\ldots, f_k\in H^1(\Omega_{\bar r})\cap C(\overline{\Omega_{\bar r}})$ satisfying
\[
\dist(\supp f_i,\supp f_j)\ge \bar r\quad \forall i\neq j, \quad \supp f_i \cap (\Omega_{\bar r} \setminus \Omega) \neq \emptyset \quad \forall i,
\]
let us consider the minimization problems 
\[
h_r:= \inf_{\mf{u}\in H_r} \sum_{i=1}^k \int_\Omega |\nabla u_i|^2, \quad r \in [0,\bar r),
\] 
where 
\begin{equation}\label{def H infty}
H_r=\left\{\mf{u} = (u_1,\dots,u_k) \in H^1(\Omega_{\bar r},\R^k)\left| \begin{array}{l} \int_{\Omega} u_i^2 u_j^2 = 0 \text{ and } \dist(\supp u_i,\supp u_j)\ge r\quad  \forall i\neq j \\ 
u_i = f_i \ \text{a.e. in $\Omega_{\bar r} \setminus \Omega$}
\end{array}\right. \right\}.
\end{equation}
As for the optimal partition problems, the local case $r=0$ is essentially understood (see \cite{CafLin08, TT}), while for the nonlocal one $r>0$, studied in \cite{STTZ2018}, there are still many open questions. However, local and nonlocal cases share the same optimal regularity for the minimizers: if $\mf{u}_r$ is a minimizer of $h_r$, then it is locally Lipschitz continuous in $\Omega$, both for $r=0$ and $r>0$. Therefore, it is natural to wonder whether a result similar to Theorem \ref{thm:unif Lip} holds true or not. We can give an affirmative answer.

\begin{theorem}\label{thm:unif Lip 2}
For any compact set $K \subset \subset \Omega$, there exists a constant $C > 0$ (which depends on $K$, $\Omega$, $N$ and on $\bar r$) such that 
 \[
	\|\mathbf{u}_r\|_{\mathrm{Lip}(K)}:=\|\mathbf{u}_r\|_{L^\infty(K)}+\| \nabla \mathbf{u}_r\|_{L^\infty(K)} \leq C,
	\]
for any $0< r < \bar r$, and any minimizer $\mathbf{u}_r$ of $h_r$. Moreover there exists $\mathbf{u}_0\in H_0\cap \mathrm{Lip}_{\loc}(\Omega)$, solution to $h_0$, such that, up to a subsequence, 
\[
\mathbf{u}_r\to \mathbf{u}_0 \quad \text{ strongly in } H^1_{\loc}(\Omega)\cap C^{0,\alpha}_{\loc}(\Omega), \text{ for every } \alpha\in (0,1).
\]
\end{theorem}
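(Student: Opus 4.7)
The plan is to mirror the three-step strategy used for Theorem \ref{thm:unif Lip}, adapting it to the interior setting of harmonic-type minimizers. The first step is a uniform $L^\infty$ bound: since each $u_{i,r}\ge 0$ is harmonic in $\{u_{i,r}>0\}$, equals $f_i$ on $\Omega_{\bar r}\setminus\Omega$, and truncating by $M:=\max_j\|f_j\|_{L^\infty(\Omega_{\bar r})}$ gives an admissible competitor with no larger Dirichlet energy, we obtain $\|u_{i,r}\|_{L^\infty(\Omega_{\bar r})}\le M$ uniformly in $r\in[0,\bar r)$.

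The core of the argument is the uniform Lipschitz bound on a compact $K\subset\subset\Omega$. Fix an intermediate $K\subset\subset K'\subset\subset\Omega$ and aim for the linear growth estimate $u_{i,r}(x)\le C\dist(x,\partial\{u_{i,r}>0\})$ for every $x\in K'$, with $C$ independent of $r$. This is exactly the pointwise estimate proved for eigenfunctions in Theorem \ref{thm:unif Lip}, and its derivation here should actually be simpler, since the right-hand side $\lambda_1(\Omega_{i,r})u_{i,r}$ present in the eigenvalue case is absent (each $u_{i,r}$ is harmonic in its positivity set). The argument should combine: (i) a one-phase Alt-Caffarelli-Friedman monotonicity formula at a free-boundary point $y_0\in\partial\{u_{i,r}>0\}$ realizing the distance to $x$, bounding the average energy density in balls around $y_0$; (ii) a Caffarelli-Jerison-Kenig type monotonicity exploiting the uniform exterior sphere condition at $y_0$ inherited from property~(2) recalled above, converting the energy bound into the pointwise one; (iii) energy estimates from minimality of $\mathbf{u}_r$ and from the uniform $L^\infty$ bound to keep constants independent of $r$. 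Once the growth estimate is established, standard interior gradient estimates for harmonic functions on $\{u_{i,r}>0\}\cap K'$ yield $\|\nabla u_{i,r}\|_{L^\infty(K)}\le C$.

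The last step is compactness and identification of the limit. The uniform Lipschitz estimate together with the fixed boundary data gives, via Arzelà-Ascoli, convergence in $C^{0,\alpha}_\loc(\Omega)$ for every $\alpha\in(0,1)$ along a subsequence, to some $\mathbf{u}_0$; the support distance constraint and the integral constraint $\int u_i^2u_j^2=0$ pass to the limit as $r\to 0^+$ by uniform convergence, so $\mathbf{u}_0\in H_0$. To upgrade weak $H^1_\loc$ convergence to strong and to prove that $\mathbf{u}_0$ is a minimizer of $h_0$, one establishes the two-sided comparison $h_0\le h_r\le h_0+Cr$: the lower bound is lower semicontinuity of the Dirichlet energy, while the upper bound is obtained by constructing an admissible competitor for $h_r$ from a minimizer of $h_0$, whose globally Lipschitz components (Theorem~A) can be contracted by an $r$-neighborhood of the corresponding free boundary at a cost of $O(r)$ in energy. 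The main obstacle is Step~2: the one-phase ACF and CJK formulas must be applied with constants that do not degenerate as $r\to 0^+$, even though the nonlocal distance constraint produces a genuinely different free boundary geometry for each $r>0$; reconciling the $r$-dependent exterior sphere condition with a uniform pointwise bound is precisely what makes the theorem nontrivial.
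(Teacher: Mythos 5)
Your scaffolding (uniform $L^\infty$ bound, monotonicity formulas, compactness, and the two-sided comparison $h_0\le h_r\le h_0+Cr$) is reasonable, but there is a genuine gap exactly where you place ``the core of the argument''. The linear-growth estimate $u_{i,r}(x)\le C\,\dist(x,\partial\{u_{i,r}>0\})$ with $C$ independent of $r$ is not ``exactly the pointwise estimate proved for eigenfunctions'' --- no such standalone estimate is proved in the paper --- and it does not follow from your items (i)--(iii). The one-phase Alt--Caffarelli--Friedman formula (Proposition \ref{lem acf one eig}, Corollary \ref{cor acf one eig}) compares energy averages only at scales below $r_0\tilde r$, where $r_0$ is the exterior-sphere radius; for the sets $\{u_{i,r}>0\}$ that radius is precisely $r$, so these estimates degenerate as $r\to0^+$, while CJK only couples different components and gives no growth rate for a single one. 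Hence monotonicity plus the $L^\infty$ bound cannot by themselves produce an $r$-uniform constant; you acknowledge this yourself when you call it ``the main obstacle'', but you never resolve it, and resolving it is the content of the theorem.

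The mechanism actually used (for Theorem \ref{thm:unif Lip}, which the proof of Theorem \ref{thm:unif Lip 2} mimics) is a blow-up/contradiction argument that exploits minimality quantitatively through a competitor construction, and this is what is missing from your plan. Assuming $M_n:=\max_i\|\nabla u_{i,n}\|_{L^\infty}\to\infty$, Lemma \ref{lem max bound} produces points $x_n$ with $|\nabla u_{1,n}(x_n)|\ge CM_n$ and $R_n=\dist(x_n,\partial\{u_{1,n}>0\})=o(r_n)$; the mean-value property and the one-phase ACF formula (used only at scales $\le r_n$, where the exterior ball of radius $r_n$ is available) give $M_n^2\le C r_n^{-N}\int_{B_{r_n}(y_n)}|\nabla u_{1,n}|^2$, and CJK forces the rescaled energies of the components $j\ge2$ near $y_n$ to be $O(M_n^{-4})$. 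The contradiction then comes from the competitor $\bar{\mathbf v}_n$: cutting the supports of $v_{j,n}$, $j\ge2$, far from the origin costs only $O(\delta_n)$ (Lemma \ref{lem: en vjn}), while replacing $v_{1,n}$ in $B_\rho$ by the mass-constrained Dirichlet minimizer gains a fixed fraction $\eps$ of its energy thanks to the unit exterior ball at the origin (Lemma \ref{lem: en v_1}); the gain scales like $\eps M_n^4\delta_n$ and beats the cost, contradicting minimality. In addition, for $h_r$ there is a specific difficulty the paper flags: since $\mathbf u_r$ does not vanish on $\partial\Omega$, the blow-up points may accumulate at $\partial\Omega$ (the analogue of Lemma \ref{lem ratio} fails), and the remedy is to run the whole argument on $\mathbf u_r\eta$ for a fixed cutoff $\eta\in C^\infty_0(\Omega)$, as in \cite{SoZi}; fixing $K\subset\subset K'\subset\subset\Omega$ a priori does not address this, and it is the reason the estimate is only local. (Minor: minimizers of $h_0$ are only locally Lipschitz, so the appeal to Theorem A and the ``globally Lipschitz'' claim in your last step also need adjusting.)
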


Problems $c_r$ and $h_r$ are closely related, both for $r=0$ and $r>0$. In turn, they are both related to the study of the asymptotic behavior of multi-components system in the limit of strong competition. This topic attracted a lot of attention in the last decades, mainly in the local setting, for which by now a variety of results are available: systems with symmetric quadratic interaction between the different densities were studied in \cite{CKL, ctv, CTVind, SoZi}; systems with variational cubic interaction in \cite{CafLin08, CTV2002, CTV2003, DaWaZh, NTTV1, STTZ16, SoZi, SoZi2, WeiWeth}; analogue problems for systems driven by the fractional Laplacian were addressed in \cite{dST, TVZ14, TVZ16, ToZi, VZ14}; the fully nonlinear setting was studied in \cite{CPQT, Q13}; and systems with asymmetric diffusion or asymmetric interaction were tackled in \cite{SoTe20, TVZ19, WeiWeth}. See also the references therein.

In contrast, besides \cite{STTZ2018}, the only contributions regarding long range interaction models are \cite{CPQ} and \cite{Boz}; in \cite{CPQ}, the authors analyzed the spatial segregation for systems such as
\begin{equation}\label{syst p}
\begin{cases}
\Delta u_{i,\beta} = \beta u_{i,\beta}\sum_{j \neq i} (\chi_{B_1} \star |u_j|^p)  & \text{in $\Omega$} \\
u_{i,\beta} = f_i \geq 0 & \text{in $\Omega_1 \setminus \Omega$},
\end{cases}
\end{equation}
with $1 \le p < +\infty$. In the above equation, $\chi_{B_1}$ denotes the characteristic function of $B_1(0)$, and $\star$ stays for the convolution. The authors proved the equi-continuity and gradient bounds for families of viscosity solutions $\{\mf{u}_\beta: \beta > 0\}$ to \eqref{syst p}, the local uniform convergence to a limit configuration $\mf{u}$, and then studied the free-boundary regularity of the positivity sets $\{u_i>0\}$ in the case $p=1$ and dimension $N=2$. In \cite{Boz}, the author proved a uniqueness result.

\subsection*{Notation and structure of the paper}

We mainly use standard notation. Whenever a function $f$ is radially symmetric, we write $f(x)=f(|x|)$. We denote by $B_r(x_0)$ the Euclidean ball of radius $r>0$ and center $x_0$; whenever $x_0=0$, we simply write $B_r$. In most of the integrals, the volume or surface elements are omitted, for the sake of brevity; the domain of integration suggests the natural choice. 

The rest of the paper is devoted to the proof of Theorem \ref{thm:unif Lip}. We focus on the case $N \ge 3$. In Section \ref{sec: pre}, we present some preliminary inequalities regarding eigenfunctions of the Laplacian. Section \ref{sec: main} contains the proof of Theorem \ref{thm:unif Lip}. Concerning the case $N=2$ in Theorem \ref{thm:unif Lip}, and Theorem \ref{thm:unif Lip 2}, we shall not present the details. The proof follows the same sketch of the one of Theorem \ref{thm:unif Lip}, being actually a bit simpler at several points. We will stress the main differences in some remarks whenever necessary.

\section{Preliminary results}\label{sec: pre}
We devote this section to some inequalities about eigenfunctions of the Laplacian that will be crucial in order to reach the conclusion of Theorem \ref{thm:unif Lip}. Some of these inequalities are already known and are presented here for the sake of clarity. Some others may be of independent interest and are given in a general setting.

\subsection{Pointwise estimate of the gradient of eigenfunctions} We show that the maximum of the gradient of a positive eigenfunction is reached at the boundary of its domain, up to a multiplicative constant depending only on the dimension, and in particular not on the domain $\Omega$. The following result can be extended to more general bounded domains (in which case the gradient may be unbounded), but we state and prove it only under an additional regularity assumption on $\Omega$.
 
\begin{lemma}\label{lem max bound}
	Let $\Omega \subset \R^N$ be a nonempty bounded domain that enjoys the exterior sphere condition (of any radius) at any point of its boundary. Let $\lambda = \lambda_1(\Omega)$ be the first positive eigenvalue of the Laplacian with Dirichlet boundary conditions with eigenfunction $u \in H^1_0(\Omega)$,
	\[
	\begin{cases}
		-\Delta u = \lambda u &\text{in $\Omega$}\\
		u = 0&\text{on $\partial \Omega$}.
	\end{cases}
	\]
	There exists a universal constant $C = C(N) > 0$ and a sequence $\{x_n\} \subset \Omega$ such that 
	\[
	\lim_{n \to +\infty} \dist(x_n,\partial \Omega) = 0 \quad \text{and} \quad \liminf_{n \to +\infty} |\nabla u(x_n)| \geq C \| \nabla u\|_{L^\infty(\Omega)}.
	\]
\end{lemma}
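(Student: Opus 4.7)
The plan is to construct the sequence $\{x_n\}$ directly by a blow-up at a near-maximum point of $|\nabla u|$, then apply Harnack's inequality and the quantitative Hopf lemma on the rescaled problem. The crucial geometric input will be that, for any $x_0\in\Omega$ with $d:=\dist(x_0,\partial\Omega)$, the inclusion $B_d(x_0)\subset\Omega$ combined with the domain monotonicity of the first Dirichlet eigenvalue forces $\lambda d^2 \leq \lambda_1(B_d)\, d^2 = c_N$, a purely dimensional constant. The hard part will be ensuring all constants remain dimensional throughout: without this bound on $\lambda d^2$, the Harnack and Hopf constants after rescaling would depend on $\Omega$, and the universal $C=C(N)$ claimed in the lemma could not be achieved.

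\textbf{Rescaling.} I will assume $M:=\|\nabla u\|_{L^\infty(\Omega)}$ is finite and positive, pick $x_0\in\Omega$ with $|\nabla u(x_0)|\geq M/2$, set $d_0:=\dist(x_0,\partial\Omega)$, choose $z^*\in\partial\Omega$ with $|z^*-x_0|=d_0$, and define $w(y):=u(x_0+d_0 y)/(d_0 M)$ on $\tilde\Omega:=(\Omega-x_0)/d_0$. A direct computation will give $-\Delta w=\tilde\lambda w$ with $\tilde\lambda:=\lambda d_0^2\leq c_N$, together with $w\geq 0$, $|\nabla w|\leq 1$, $|\nabla w(0)|\geq 1/2$, and $B_1(0)\subset\tilde\Omega$ with $\tilde z:=(z^*-x_0)/d_0\in\partial B_1\cap\partial\tilde\Omega$ and $w(\tilde z)=0$. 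Integrating the gradient bound along the segment from $\tilde z$ to $0$ will give $w(0)\leq 1$ and hence $\|w\|_{L^\infty(B_1)}\leq 2$.

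\textbf{Harnack, Hopf, and sequence extraction.} Interior $C^{1,\alpha}$ estimates on $B_1(0)$ for $-\Delta w=\tilde\lambda w$ (with constants depending only on $N$, since $\tilde\lambda$ and $\|w\|_\infty$ are so bounded) together with $|\nabla w(0)|\geq 1/2$ will give $|\nabla w|\geq 1/4$ on some $B_\rho(0)$ with $\rho=\rho(N)$; integrating in the direction of $\nabla w(0)$ will then yield $\sup_{B_{1/2}}w\geq \rho/4$, and Harnack's inequality will give $\inf_{B_{1/2}}w\geq c^*(N)>0$. Since $-\Delta w=\tilde\lambda w\geq 0$ and $w(\tilde z)=0$, the quantitative Hopf lemma on $B_1(0)$ will produce $|\partial_\nu w(\tilde z)|\geq c'(N)>0$ (with $\nu$ the outward unit normal to $\partial B_1$ at $\tilde z$), so $w(\tilde z-t\nu)\geq (c'(N)/2)\,t$ for all small $t>0$. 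The mean value theorem along this segment (where $w$ is $C^1$ by interior regularity) will then provide $s_t\in(0,t)$ with $|\nabla w(\tilde z-s_t\nu)|\geq c'(N)/2$; setting $x_n:=x_0+d_0(\tilde z-s_{1/n}\nu)\in\Omega$ will give $\dist(x_n,\partial\Omega)\leq d_0/n\to 0$ and $|\nabla u(x_n)|=M\,|\nabla w(\tilde z-s_{1/n}\nu)|\geq (c'(N)/2)\,M$, proving the lemma with $C=c'(N)/2$.
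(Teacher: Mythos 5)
Your argument is correct and follows essentially the same route as the paper: rescale at a (near-)maximum point of $|\nabla u|$ by the distance $d_0$ to the boundary so that $\lambda d_0^2$ is bounded by a dimensional constant, obtain a dimensional positive lower bound for the rescaled function on an interior sphere (the paper does this via a $C^2$ Taylor expansion plus the Lipschitz bound, where you use $C^{1,\alpha}$ estimates and Harnack), and then run the same harmonic-barrier (quantitative Hopf) comparison in an annulus touching the contact point to produce points approaching $\partial\Omega$ where the gradient is at least a dimensional fraction of $\|\nabla u\|_{L^\infty}$. Working at a near-maximum point and extracting the sequence by the mean value theorem is a mild streamlining of the paper's contradiction argument (and avoids its case distinction on whether the maximum of $|\nabla u|$ is attained), but the substance is identical.
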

\begin{proof}
	By classical regularity theory of elliptic equations, we know that the eigenfunction $u$ is a $C^\infty$ function inside of $\Omega$ and is Lipschitz continuous up to the boundary \cite[Proposition 2.20]{HanLin}, and by the maximum principle we can assume that $u > 0$ in $\Omega$. Exploiting the regularity of $u$ inside of $\Omega$, we find that the function $x \mapsto |\nabla u(x)|$ is continuous and bounded in $\Omega$. In order to reach the conclusion, since $\Omega$ is bounded, it suffices to show that, if $|\nabla u(x)|$ attains its maximum inside of $\Omega$, then its maximum value is still comparable to the value of the gradient close to a point on the boundary. Hence we can further assume that there exists $y \in \Omega$ such that 
	\[
	\| \nabla u \|_{L^\infty(\Omega)} = |\nabla u(y)|.
	\]
	Letting $r = \dist(y, \partial \Omega) > 0$, we consider the function $v \in \mathrm{Lip}(\overline{B_1})$ defined as
	\[
	v(x) := \frac{u(y + r x)}{r |\nabla u(y)|}.
	\]
	Then, by definition, we see that $v > 0$ and $|\nabla v| \leq 1$ in $B_1$,  with $|\nabla v(0)| = 1$ and
	\begin{equation}\label{eq eigen scale v}
		\begin{cases}
			-\Delta v = \lambda r^2 v &\text{in $B_1$}\\
			v(z) = 0&\text{for some $z \in \partial B_1\cap \frac{\partial \Omega-y}{r}$}.
		\end{cases}
	\end{equation}
	Observe that, by set inclusion, we find $\lambda r^2 \leq \lambda_1(B_1)$, the first Dirichlet eigenvalue of the unit ball in $\R^N$. We want to show that $v(0) \geq m$ for some $m > 0$ that depends only on the dimension $N$. By elliptic regularity theory \cite[Corollary 6.3]{GT}, we know that there exists a constant $C_N > 0$ that depends only on the dimension $N$ such that
	\[
	\|D^2 v\|_{L^\infty(B_{1/2})} \leq C_N \left( \|v\|_{L^\infty(B_{1})}  + \| \lambda r^2 v\|_{L^\infty(B_{1})}+\|\lambda r^2 \nabla v\|_{L^\infty(B_{1})} \right) \leq 2C_N \left(1+ \lambda_1(B) \right),
	\]
	where $D^2 v$ is the Hessian matrix of $v$. Let 
	\[
		A_N = \max\left(3,  2C_N \left(1+ \lambda_1(B_1)\right) \right)
	\]
which, ultimately, depends only on the dimension $N$. For any $x \in B_{1/2}$ we have
	\[
		v(x) = v(0) + \nabla v(0) \cdot x + R(x)
	\]
	where the remainder verifies $|R(x)| \leq A_N \|x\|^2 / 2$. We now take
	\[
		x_0 = - \frac{1}{A_N} \nabla v(0),
	\]
which belongs to $B_{1/2}$ since $A_N>2$ and $|\nabla v(0)|=1$. Recalling that $v > 0$ in $B_1$ and using again the fact that $|\nabla v(0)|=1$, we find
	\[
	0 \leq v(x_0) \leq v(0) - \frac{1}{A_N}|\nabla v(0)|^2+\frac{1}{2A_N} |\nabla v(0)|^2=v(0)-\frac{1}{2A_N},
	\]
	that is
	\[
		v(0) \geq \frac{1}{2A_N} > 0.
	\] 
	Combining this estimate with the fact that $|\nabla v| \leq 1$ we have that
	\[
		\min \left\{ v(x) : |x| \leq \frac{1}{4A_N} \right\} \geq \frac{1}{2A_N} - \frac{1}{4 A_N} = \frac{1}{4A_N} > 0.
	\]	
	We now consider the function $\underline v \in C^2(B_1 \setminus B_{1/(4A_N)})$ defined as
	\[
		\underline{v}(x) = D_N \left(\frac{1}{|x|^{N-2}} - 1\right),
	\]
	for a constant $D_N>0$ defined by the relation $D_N((4A_N)^{N-2}-1)=1/(4A_N)$. Therefore, $\underline{v}$ is the solution to the problem 
	\[
	\begin{cases}
	-\Delta \underline{v} = 0 \leq -\Delta v &\text{in $B_1\setminus B_{1/(4A_N)}$}\\
	\underline{v} = 0 \leq v&\text{on $\partial B_1$}\\
	\underline{v} = \frac{1}{4A_N} \leq v&\text{on $\partial B_{1/(4A_N)}$}.
	\end{cases}
	\]
	Notice that $\underline{v}$ is radially decreasing, $\partial_r \underline{v}$ is radially increasing, and 	\[
	\partial_r \underline{v}(x) \leq \partial_r \underline{v}(z) = (2-N) D_N =: -\kappa_N < 0 \quad \forall x \in B_1\setminus B_{1/(4A_N)}.
	\]
	Moreover, by the maximum principle, $\underline{v} \leq v$ in $B_1 \setminus B_{1/(4A_N)}$. We claim that this implies that there exists a sequence $\{z_n\} \subset B_1$ such that
	\[
	z_n \to z \qquad \text{and} \qquad \liminf_{n \to \infty} |\nabla v(z_n)| \geq \kappa_N.
	\]
	Indeed, let us assume by contradiction that there exists $\eps > 0$ such that for any $x \in B_\eps(z) \cap B_1$ we have $|\nabla v(x) |< \kappa_N$. We consider the function $f \in \mathrm{Lip}([0,1])$, defined as $f(t) = v((1-t)z)$ for all $t \in [0,1]$. We have that $|f'(t)| = |\nabla v((1-t)z) \cdot z  | < \kappa_N$ for all $t \in (0,\eps)$, thus
	\[
	f(\eps) = f(0) + \int_{0}^{\eps} f'(s) ds \leq \int_{0}^{\eps} |f'(s)| ds < \eps \kappa_N \implies v((1-\eps)z) < \eps \kappa_N.
	\]
	On the other hand, by the same reasoning as before we have that
	\[
	\underline v((1-\eps)z) = -\int_{0}^{\eps}  \partial_r \underline v((1-s)z) ds \geq \eps \kappa_N,
	\]
	in contradiction with the fact that $\underline{v} \leq v$ in $B_1 \setminus B_{1/(4A_N)}$. The conclusion follows by scaling back to the original function $u$.
\end{proof}

\subsection{Mean-value property for eigenfunctions} We show that  the eigenfunctions of the Laplacian and their gradients enjoy a mean-value property similar to harmonic functions. For a given $\bar \lambda>0$, let $\bar R = \bar R (\bar \lambda)> 0$ be such that the ball $B_{2\bar R}$ has first Dirichlet eigenvalue equal to $\bar \lambda$. We denote by $\varphi$ the corresponding positive eigenfunction, with
\begin{equation}\label{eq:eigenfunction_def}
\begin{cases}
	-\Delta \varphi = \bar \lambda \varphi & \text{in $B_{2 \bar R}$},\\
	\varphi = 0 &  \text{on $\partial B_{2 \bar R}$},\\
	\varphi(0) = 1.
\end{cases}
\end{equation}
We recall that $\varphi$ is radially symmetric and radially decreasing, attaining its only maximum at the origin and $\varphi(x) = J_{N/2-1,1}(\alpha|x|)$ where $J_{N/2-1,1}$ is the Bessel function of first kind and index $N/2-1$, and $\alpha > 0$ is a suitable scaling parameter. 

\begin{lemma}\label{lem mean prop}
	Let $R \leq \bar R$ and assume there exists a  nonnegative function $v \in C^\infty(B_R)$ such that
	\[
	-\Delta v \leq \lambda v \qquad \text{in $B_R$},
	\]
	for $\lambda \leq \bar \lambda$. Then for any $r \in (0,R)$ we have
	\[
	\frac{1}{r^N} \int_{B_r} v \leq  \frac{1}{\varphi( R) R^N} \int_{B_R} v.
	\]
\end{lemma}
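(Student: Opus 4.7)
The plan is to reduce to the radial case via spherical averaging, then perform a Sturm--Wronskian comparison with $\varphi$ to show that the ratio $v/\varphi$ is non-decreasing on $(0,R)$, and finally conclude via a Chebyshev-type monotonicity of weighted averages together with the elementary bounds $\varphi(R) \le \varphi \le 1$ on $B_R$. The conceptual step will be the Chebyshev trick that turns the pointwise monotonicity of $v/\varphi$ into the integral inequality of the claim; the rest is bookkeeping.

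First I would pass to the spherical average $\bar v(\rho) := |\partial B_\rho|^{-1}\int_{\partial B_\rho} v\,d\sigma$, regarded as a radial function on $[0,R)$. The standard identity $\Delta\bar v(\rho) = \overline{\Delta v}(\rho)$ together with $v \ge 0$ and $-\Delta v \le \lambda v$ yields $-\Delta\bar v \le \lambda\bar v$ on $B_R$. Since $\int_{B_r} v = \int_{B_r} \bar v$ for every $r \in (0,R)$, I may assume that $v$ is radial.

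For radial $v$ the hypothesis reads $(\rho^{N-1} v')' + \lambda \rho^{N-1} v \ge 0$, whereas $\varphi$ satisfies the equality $(\rho^{N-1} \varphi')' + \bar\lambda \rho^{N-1} \varphi = 0$. Introducing the Wronskian $W(\rho) := \rho^{N-1}\bigl( v'(\rho)\varphi(\rho) - v(\rho)\varphi'(\rho) \bigr)$ and differentiating, one finds
\[
W'(\rho) = \varphi(\rho)\bigl[(\rho^{N-1} v')' + \lambda \rho^{N-1} v\bigr] + (\bar\lambda - \lambda)\rho^{N-1} v(\rho)\varphi(\rho) \ge 0,
\]
where I use $\varphi > 0$ on $[0,R] \subset [0,2\bar R)$, $v \ge 0$, and $\lambda \le \bar\lambda$. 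Smoothness of radial profiles at the origin forces $v'(0) = \varphi'(0) = 0$, so $W(0) = 0$, hence $W \ge 0$ on $[0,R]$, which is equivalent to $(v/\varphi)' \ge 0$.

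The last step is a Chebyshev-type observation: whenever $g$ is non-decreasing on $[0,R]$ and $w>0$, the quotient $r \mapsto \int_0^r gw \,/\, \int_0^r w$ is non-decreasing (its derivative equals $w(r)\int_0^r(g(r)-g(\rho))w(\rho)\,d\rho$ divided by $\bigl(\int_0^r w\bigr)^2 \ge 0$). Applying this with $g = v/\varphi$ and $w(\rho) = \rho^{N-1}\varphi(\rho)$ yields
\[
\frac{\int_{B_r} v}{\int_{B_r} \varphi} \le \frac{\int_{B_R} v}{\int_{B_R} \varphi}.
\]
Since $\varphi \le 1$ on $B_r$ gives $\int_{B_r}\varphi \le |B_r|$, and $\varphi \ge \varphi(R)$ on $B_R$ gives $\int_{B_R}\varphi \ge \varphi(R)|B_R|$, the ratio on the right is bounded by $r^N/(\varphi(R) R^N)$, which produces exactly the stated inequality.
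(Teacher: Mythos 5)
Your proof is correct. It shares with the paper the central idea of comparing $v$ with $\varphi$ through the ratio $v/\varphi$ and then discarding $\varphi$ at the end via the crude bounds $\varphi(R)\le\varphi\le 1$, but the implementation is genuinely different. The paper never symmetrizes: it uses the divergence identity $-\div\bigl(\varphi^2\nabla(v/\varphi)\bigr)\le(\lambda-\bar\lambda)v\varphi\le 0$, integrates over balls (exploiting that $\varphi$ is constant on spheres) to conclude that the normalized spherical means $\Phi(r)=r^{1-N}\int_{\partial B_r}v/\varphi$ are nondecreasing, and then converts this into the volume inequality $r^{-N}\int_{B_r}v/\varphi\le R^{-N}\int_{B_R}v/\varphi$ by integrating the cross inequality $t^{N-1}\int_{\partial B_s}v/\varphi\le s^{N-1}\int_{\partial B_t}v/\varphi$ in $s$ and $t$. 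You instead reduce to the radial case by spherical averaging (legitimate, since $\Delta\bar v=\overline{\Delta v}$, averaging preserves the differential inequality, and $\int_{B_r}v=\int_{B_r}\bar v$), prove pointwise monotonicity of $\bar v/\varphi$ by a Sturm--Wronskian comparison (where $W(0)=0$ indeed follows from the $\rho^{N-1}$ factor and $\bar v'(0)=\varphi'(0)=0$), and then pass to the integral statement by a Chebyshev argument with weight $\rho^{N-1}\varphi$; note that your intermediate inequality, $\int_{B_r}v/\int_{B_r}\varphi\le\int_{B_R}v/\int_{B_R}\varphi$, is slightly different from the paper's, though both yield the claim by the same final bookkeeping. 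What your route buys is a purely one-dimensional, ODE-level argument (no divergence-structure computation, no justification of integration by parts for a possibly singular test function such as the paper needs elsewhere), and a cleaner statement of where monotonicity enters; what the paper's route buys is that it works directly with the non-radial $v$ and sets up exactly the machinery ($w=v/\varphi$, the operator $\div(\varphi^2\nabla\cdot)$) that is reused in the Alt--Caffarelli--Friedman-type monotonicity formula of Proposition 2.7, so the two proofs are aligned with the later sections in a way yours is not. The only point worth a sentence in a write-up is the harmless one that $\int_{B_R}v$ could a priori be infinite (the inequality is then trivial, or one applies the estimate on $B_{R'}$ and lets $R'\uparrow R$); this affects the paper's version equally.
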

\begin{proof}
	First we observe that, since $\varphi>0$ in $B_R$, 
	\begin{equation*}
		-\div \left( \varphi^2 \nabla \left(\frac{v}{\varphi}\right)\right) = - \Delta v \varphi + \Delta \varphi v  \le (\lambda-\bar \lambda)v \varphi  \le 0 \quad \text{ in } B_R.
	\end{equation*}
	For $0<r<R$, integrating the previous inequality in $B_r$ we find
	\[
	0 \leq \int_{B_r} \div \left( \varphi^2 \nabla \left(\frac{v}{\varphi}\right)\right)  =  \varphi^2(r)  \int_{\partial B_r}  \partial_\nu \left(\frac{v}{\varphi}\right) \implies \int_{\partial B_r}  \partial_\nu \left(\frac{v}{\varphi}\right) \geq 0.
	\]
Introduce the smooth function $\Phi:(0,R)\to \R$ as
	\[
	\Phi(r) := \frac{1}{r^{N-1}} \int_{\partial B_r} \frac{v(y)}{\varphi(y)}\,d\sigma_y = \int_{\partial B_1}\frac{v(r x)}{\varphi(rx)}\,d\sigma_x.
	\]
	Taking the derivative of $\Phi$ yields
	\[
	\Phi'(r) = \frac{1}{r^{N-1}} \int_{\partial B_r} \partial_\nu \left(\frac{v}{\varphi}\right) \geq 0,
	\]
	that is,  the function $r\mapsto \Phi(r)$ is positive and increasing for $r<R$. As a result, for any $0<s<t<R$ we have
	\[
	t^{N-1} \int_{\pa B_s} \frac{v}{\varphi}  \leq  s^{N-1} \int_{\pa B_t} \frac{v}{\varphi}.
	\]
	Next, for a given $r\in (0,R)$, we integrate the previous inequality for $s \in (0,r)$ and afterwards for $t \in (r,R)$, and  deduce that
	\[
	\left(\frac{R^N}{N}- \frac{r^N}{N}\right)\int_{B_r} \frac{v}{\varphi} \leq \frac{r^N}{N}\int_{B_R\setminus B_r} \frac{v}{\varphi}.
	\]
	By rearranging the terms we obtain
	\[
	\frac{1}{r^N} \int_{B_r} \frac{v}{\varphi} \leq \frac{1}{R^N} \int_{B_R} \frac{v}{\varphi}.
	\]
	To conclude we recall that $\varphi$ is decreasing in $r$ and that $\varphi(0)=1$.
\end{proof}

A direct consequence of the mean-value property is a similar inequality for the gradient of eigenfunctions.
\begin{corollary}\label{cor mean}
	Let $R \leq \bar R$ and assume there exists a  function $u \in C^\infty(B_R)$ such that
	\[
	-\Delta u = \lambda u \qquad \text{in $B_R$},
	\]
	for $2\lambda \leq \bar \lambda$. Then for any $r \in (0,R)$ we have
	\[
	\frac{1}{r^N} \int_{B_r} |\nabla u|^2 \leq  \frac{1}{\varphi( R) R^N} \int_{B_R} |\nabla u|^2
	\]
	and, in particular,
	\[
	|\nabla u(0)|^2\leq \frac{1}{\varphi(R) |B_R|} \int_{B_R} |\nabla u|^2.
	\]
\end{corollary}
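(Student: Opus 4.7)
The plan is to apply Lemma \ref{lem mean prop} to the function $v := |\nabla u|^2$, after verifying that it satisfies the required differential inequality with a suitable constant. The main ingredient is the Bochner identity: a direct computation gives
\[
\Delta |\nabla u|^2 = 2|D^2 u|^2 + 2\nabla u \cdot \nabla(\Delta u).
\]
Since $u$ solves $-\Delta u = \lambda u$, by smoothness of $u$ in $B_R$ (by elliptic regularity) we can differentiate the equation to get $\nabla(\Delta u) = -\lambda \nabla u$, and so
\[
-\Delta |\nabla u|^2 = -2|D^2 u|^2 + 2\lambda |\nabla u|^2 \leq 2\lambda |\nabla u|^2 \qquad \text{in } B_R.
\]

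Thus $v = |\nabla u|^2$ is a nonnegative $C^\infty$ function in $B_R$ satisfying $-\Delta v \leq (2\lambda) v$ with $2\lambda \leq \bar\lambda$ by hypothesis. Lemma \ref{lem mean prop} then yields directly the first inequality
\[
\frac{1}{r^N} \int_{B_r} |\nabla u|^2 \leq \frac{1}{\varphi(R) R^N} \int_{B_R} |\nabla u|^2
\]
for every $r \in (0,R)$.

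For the pointwise bound at the origin, I would let $r \to 0^+$. Since $|\nabla u|^2$ is continuous at $0$, Lebesgue's differentiation theorem (or simply continuity) gives
\[
\lim_{r\to 0^+} \frac{1}{|B_r|} \int_{B_r} |\nabla u|^2 = |\nabla u(0)|^2,
\]
and recalling $|B_r| = |B_1|\, r^N$, the left-hand side of the previous inequality tends to $|B_1| \cdot |\nabla u(0)|^2$. Dividing both sides by $|B_1|$ and using $|B_R| = |B_1| R^N$ produces exactly the stated bound
\[
|\nabla u(0)|^2 \leq \frac{1}{\varphi(R)\,|B_R|} \int_{B_R} |\nabla u|^2.
\]
There is really no obstacle here: the whole corollary is a two-line consequence of Bochner's identity combined with the mean-value Lemma \ref{lem mean prop}. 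The only point that requires a moment of attention is the sign in the Bochner computation and the observation that the factor $2$ appearing in front of $\lambda$ is precisely absorbed by the assumption $2\lambda \leq \bar\lambda$.
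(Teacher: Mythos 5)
Your proof is correct and follows essentially the same route as the paper: the paper also applies Lemma \ref{lem mean prop} to $v=|\nabla u|^2$ after the Bochner-type computation $-\Delta |\nabla u|^2 = 2\bigl(\lambda |\nabla u|^2 - \sum_i |\nabla u_{x_i}|^2\bigr)\le 2\lambda |\nabla u|^2$, using $2\lambda\le\bar\lambda$, and the pointwise bound follows by letting $r\to 0^+$ exactly as you do.
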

\begin{proof}
	It suffices to consider Lemma \ref{lem mean prop} with $v = |\nabla u|^2$, since
	\[
	-\Delta |\nabla u|^2 = 2  \left(\lambda |\nabla u|^2 - \sum_{i=1}^N |\nabla u_{x_i}|^2 \right) \leq 2 \lambda |\nabla u|^2 \quad \text{ in } B_R
	\]
	and $2\lambda \leq \bar \lambda$.
\end{proof}

\subsection{Energy estimate of the gradient of eigenfunction} Previously we have shown a mean-value property for the gradient of eigenfunction in the interior of their support. In this section we prove a similar result for points on the boundary. It rests on a monotonicity formula of Alt-Cafferelli-Friedman type for a single function defined in a domain that enjoys the exterior sphere condition. We thus first prove such formula.

As before, we fix $\bar \lambda>0$ and let $\bar R = \bar R (\bar \lambda)> 0$ be such that the ball $B_{2\bar R}$ has first Dirichlet eigenvalue equal to $\bar \lambda$, with eigenfunction $\varphi$ normalized in such a way that $\varphi(0)=1$.  Let $\Gamma_\varphi \in C^2(B_{3\bar R/2}\setminus\{0\})$ be a positive and radial solution of
\begin{equation}\label{eq:fundamental_eq}
	-\div\left(\varphi^2 \nabla \Gamma_\varphi\right) = \delta \qquad \text{in $B_{3\bar R/2}$},
\end{equation}
where $\delta$ is the Dirac delta centered at the origin. A direct computation shows that we can choose
\[
	\Gamma_\varphi(r) = (N-2) \int_r^{3 \bar R/2} \frac{s^{1-N}}{\varphi^2(s)} ds,\qquad r=|x|.
\]
With this choice we additionally have that $\Gamma_\varphi(3\bar R/2)=0$, $\Gamma_\varphi(r) > 0$ for any $r\in(0,3\bar R/2)$, and $\Gamma_\varphi'(r)=-\frac{N-2}{\varphi^2(r)r^{N-1}}$. We also define 
\begin{equation}\label{eq:def_psi}
\psi(r):=r^{N-2}\varphi^2(r)\Gamma_\varphi(r).
\end{equation}
which we assume to be extended by continuity for $r = 0$. We have the following.
\begin{lemma}\label{lem psi}
	 The function $\psi$ is Lipschitz continuous in $B_{3\bar R/2}$ and radially symmetric. For any $r \in [0,\bar R]$, $\psi(r) > 0$, while $\psi(3\bar R/2) = 0$ and there exists $C=C(N, \bar \lambda) \ge 0$ such that
	\begin{equation}\label{eq:psi}
		|\psi(r)-1| \le C r \qquad \text{for $r\in (0,3\bar R/2)$}.  
	\end{equation}
\end{lemma}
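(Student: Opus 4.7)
The plan is to establish the four assertions (radial symmetry, boundary values, positivity, linear estimate) mostly by direct computation, with Lipschitz continuity coming at the end as a consequence of the linear estimate. Radial symmetry is immediate from the definition of $\psi$. Positivity on $(0,3\bar R/2)$ follows since each factor $r^{N-2}$, $\varphi^2(r)$, $\Gamma_\varphi(r)$ is strictly positive, and the equality $\psi(3\bar R/2)=0$ is immediate from $\Gamma_\varphi(3\bar R/2)=0$. All the substantive work lies in proving $|\psi(r)-1|\le Cr$.

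My first step is to change variables $s=ru$ in the integral defining $\Gamma_\varphi$ to rewrite
\[
\psi(r)=\varphi^2(r)(N-2)\int_{1}^{3\bar R/(2r)} \frac{u^{1-N}}{\varphi^2(ru)}\,du.
\]
Using $(N-2)\int_1^\infty u^{1-N}\,du=1$, this gives the decomposition
\[
\psi(r)-1=-\bigl(\tfrac{2r}{3\bar R}\bigr)^{N-2}+(N-2)\int_1^{3\bar R/(2r)} u^{1-N}\!\left[\tfrac{\varphi^2(r)}{\varphi^2(ru)}-1\right]du.
\]
In particular, letting $r\to 0^+$ and using dominated convergence with majorant $u^{1-N}/\varphi^2(3\bar R/2)$, I obtain $\psi(0)=1$. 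For the first term on the right-hand side, $N\ge 3$ and boundedness of $r$ give an $O(r)$ bound. For the integral, the key input is that $\varphi$ is smooth and radially symmetric, so $\varphi'(0)=0$, which upgrades the trivial bound $|\varphi'(s)|\le C$ to $|\varphi'(s)|\le C_1 s$ on $[0,3\bar R/2]$. Combined with $\varphi^2(s)\ge \varphi^2(3\bar R/2)>0$ on this interval, this yields
\[
\left|\frac{\varphi^2(r)}{\varphi^2(ru)}-1\right|\le C\left|\int_{ru}^{r} 2\varphi(s)\varphi'(s)\,ds\right|\le C' r^2\,|1-u^2|.
\]
Plugging in and using $|1-u^2|\le u^2$ for $u\ge 1$, the integral is bounded by $C r^2\int_1^{3\bar R/(2r)}u^{3-N}\,du$, which is $O(1/r)$ for $N=3$, $O(\log(1/r))$ for $N=4$, and $O(1)$ for $N\ge 5$. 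In all dimensions, multiplication by $r^2$ gives $O(r)$ on $(0,3\bar R/2)$ (since $r^2\log(1/r)\le Cr$ on any bounded interval).

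For Lipschitz continuity I differentiate $\psi(r)=r^{N-2}\varphi^2(r)\Gamma_\varphi(r)$ using $\Gamma_\varphi'(r)=-(N-2)/(r^{N-1}\varphi^2(r))$, which produces the clean identity
\[
\psi'(r)=\frac{N-2}{r}(\psi(r)-1)+\frac{2\varphi'(r)}{\varphi(r)}\psi(r).
\]
The estimate from the previous step shows the first summand is bounded by a constant, and the second summand is bounded because $\varphi'(r)=O(r)$ near the origin, $\varphi$ is uniformly positive on $[0,3\bar R/2]$, and $\psi$ is bounded. Hence $\psi'\in L^\infty(0,3\bar R/2)$, and together with continuity at $r=0$ this gives Lipschitz continuity on all of $[0,3\bar R/2]$.

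The main obstacle is the low-dimensional behavior: in $N=3$ and $N=4$, any bound $|\varphi^2(r)/\varphi^2(ru)-1|\le C'r\,(1+u)$ using only Lipschitz continuity of $\varphi$ would produce divergent integrals. What saves the argument is extracting the full $r^2|1-u^2|$ factor, which requires \emph{both} the smoothness of $\varphi$ and the radial symmetry (equivalently $\varphi'(0)=0$). Once this is in place, the rest of the proof is bookkeeping.
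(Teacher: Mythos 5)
Your proof is correct and follows essentially the same route as the paper: both decompose $\psi(r)-1$ by comparing $\Gamma_\varphi$ against the normalization $(N-2)\int^{\infty} s^{1-N}\,ds$ (your substitution $s=ru$ is only a cosmetic rescaling of that computation), and both hinge on the same key point that $\varphi$ is smooth and radial with $\varphi(0)=1$, $\varphi'(0)=0$, so that $\varphi^2(r)-\varphi^2(s)$ is quadratically small, which is exactly what handles the dimensions $N=3,4$. The only addition is your explicit verification of the Lipschitz bound via the identity for $\psi'$, which the paper leaves as a direct consequence of the definition; that part is also fine.
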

\begin{proof}
	We only need to show \eqref{eq:psi}, as the other properties in the statement are direct consequences of the definition of the function $\psi$. We have
	\begin{align*}
		\frac{\psi(r)-1}{r}&=\frac{1}{r} \left((N-2) \int_r^{3\bar R/2} \frac{s^{1-N}}{r^{2-N}}\frac{\varphi^2(r)}{\varphi^2(s)}\, ds -1\right) \\
		&=\frac{1}{r} \left((N-2) \int_r^{3\bar R/2} \frac{s^{1-N}}{r^{2-N}}\frac{\varphi^2(r)}{\varphi^2(s)}\, ds -(N-2)\int_r^{+\infty} \frac{s^{1-N}}{r^{2-N}}\, ds\right)\\
		&=(N-2)r^{N-3}\left( \int_r^{3\bar R/2} s^{1-N} \left( \frac{\varphi^2(r)}{\varphi^2(s)}-1 \right)\, ds - \int_{3\bar R/2}^{+\infty} s^{1-N}\, ds \right)\\
		&=(N-2)r^{N-3}\left( \int_r^{3\bar R/2} s^{1-N} \frac{\varphi(r)+\varphi(s)}{\varphi^2(s)}(\varphi(r)-\varphi(s))  \, ds - \frac{(3\bar R)^{2-N}}{(N-2) 2^{2-N}} \right).
	\end{align*}
	Now observe that, by monotonicity of $\varphi$,
	\[
	0\leq \frac{\varphi(r)+\varphi(s)}{\varphi^2(s)} \leq \frac{2\varphi(0)}{\varphi^2(3\bar R/2)}=\frac{2}{\varphi^2(3\bar R/2)},
	\]
	and that, since $\varphi$ is smooth and radial, 
	\[
	\varphi(r)=1+\varphi''(\xi)\frac{r^2}{2},\qquad \varphi(s)=1+\varphi ''(\eta)\frac{s^2}{2}
	\]
	for some $\xi,\eta\in (0,3\bar R/2)$. Therefore, there exists $C>0$, depending on $\bar R$, such that
	\[
	\left|\frac{\psi(r)-1}{r}\right|\leq Cr^{N-3} \left(r^2\int_r^{3\bar R/2} s^{1-N}\, ds + \int_r^{3\bar R/2} s^{3-N}\, ds   + C\right)\leq C,
	\]
	since $N\geq 3$.
\end{proof}

We are now in a position to state the monotonicity formula. We work with the family of open domains
\[
B_r\setminus \overline{B_1(-e_1)}=\left\{ x \in B_{r} : (x_1 + 1)^ 2 + \sum_{i = 2}^N x_i^2 > 1\right\},
\]
where $e_1=(1,0,\ldots, 0)$ is the first vector of the canonical basis of $\R^N$.
\begin{proposition}\label{lem acf one eig}
Let $\lambda\leq \bar \lambda$ and let $u \in H^1(B_{\bar R})$ be a nonnegative solution to
\[
	\begin{cases}
	-\Delta u \leq \lambda u &\text{in $B_{\bar R}\setminus \overline{B_1(-e_1)}$}\\
	u = 0 &\text{in } B_{\bar R} \cap \overline{B_1(-e_1)}.
	\end{cases}
\]
Then there exist $C = C(N,\bar \lambda)> 0$ and $\tilde r = \tilde r(N,\bar \lambda)>0$, such that
the function
\begin{equation}\label{eq:Psi}
	\Psi(r) := e^{ C r } \frac{1}{r^2} \int_{B_r} \frac{\psi}{|x|^{N-2}} \left|\nabla \left(\frac{u}{\varphi} \right)\right|^2=e^{ C r } \frac{1}{r^2} \int_{B_r} \varphi^2 \Gamma_\varphi\left|\nabla \left(\frac{u}{\varphi} \right)\right|^2.
\end{equation}
	is nondecreasing in $r \in (0,\tilde r)$, and
	\begin{equation}\label{eq:Psi_H^1_estimate2}
		\frac{1}{r^N}\int_{B_r} |\nabla u|^2 \leq C\Psi(r) \qquad \forall r \in (0,\tilde r).
	\end{equation}
Moreover, if $-\Delta u=\lambda u$  in $\{u>0\}$, and $\{u=0\}$ has locally finite perimeter,
then:
	\begin{equation}\label{eq:Psi_H^1_estimate}
		\Psi(r) \le \frac{C}{r^N} \int_{ B_r} |\nabla u|^2 \qquad r\in (0,\tilde r)
	\end{equation}
\end{proposition}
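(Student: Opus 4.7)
The plan is to recast the problem in terms of $v := u/\varphi$. A direct computation using $-\Delta \varphi = \bar\lambda \varphi$ gives
\[
-\div(\varphi^2 \nabla v) = \varphi(-\Delta u) - \bar\lambda u \varphi \leq (\lambda - \bar\lambda) u\varphi \leq 0 \qquad \text{in } B_{\bar R} \setminus \overline{B_1(-e_1)},
\]
with $v \equiv 0$ on $B_{\bar R} \cap \overline{B_1(-e_1)}$; since $0 \in \pa B_1(-e_1)$, in particular $v(0) = 0$. The weight appearing in $\Psi$ is $\varphi^2 \Gamma_\varphi = \psi/|x|^{N-2}$, comparable to $|x|^{2-N}$ near the origin by Lemma \ref{lem psi}. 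Moreover, $B_r \cap \overline{B_1(-e_1)}$ has measure at least $c |B_r|$ for every small $r$, so the Poincaré inequality yields $\int_{B_r} v^2 \leq C r^2 \int_{B_r} |\nabla v|^2$.

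\emph{Monotonicity of $\Psi$.} Set $J(r) := r^{-2} \int_{B_r} \varphi^2 \Gamma_\varphi |\nabla v|^2$, so that $\Psi = e^{Cr} J$ and $\Psi' = e^{Cr}(J' + CJ)$. A direct differentiation gives
\[
J'(r) = -\frac{2}{r} J(r) + \frac{\Gamma_\varphi(r) \varphi^2(r)}{r^2} \int_{\pa B_r} |\nabla v|^2 ,
\]
so the task is to show $J' + CJ \geq 0$. The key input is a spherical spectral inequality of Alt--Caffarelli--Friedman type for a single phase: decomposing $|\nabla v|^2 = (\pa_r v)^2 + r^{-2}|\nabla_\tau v|^2$ on $\pa B_r$ and applying the Rayleigh quotient $\int |\nabla_\tau v|^2 \geq \lambda_1^{\mathrm{sph}}(\Sigma_r) \int v^2$, where $\Sigma_r$ is the positivity set of $v$ on $\pa B_r$ rescaled to $\S^{N-1}$. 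The exterior sphere condition implies that $\Sigma_r$ is contained in a spherical cap converging, as $r \to 0$, to an open hemisphere, whose first Dirichlet spherical eigenvalue is $N-1$; Friedland--Hayman stability then provides $\lambda_1^{\mathrm{sph}}(\Sigma_r) \geq (N-1) - C r$. Combining this with a Pohozaev-type identity obtained by testing $-\div(\varphi^2 \nabla v) \leq 0$ against $v$ on $B_r$, and with the estimate $|\psi - 1| \leq Cr$ from Lemma \ref{lem psi}, the $O(r)$ errors can be absorbed into the exponential factor, yielding $J' + C J \geq 0$ on $(0, \tilde r)$.

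\emph{Proof of the two comparison inequalities.} For \eqref{eq:Psi_H^1_estimate2}, expand $\nabla u = \varphi \nabla v + v \nabla \varphi$ to get
\[
\int_{B_r} |\nabla u|^2 \leq 2 \int_{B_r} \varphi^2 |\nabla v|^2 + 2 \int_{B_r} v^2 |\nabla \varphi|^2 \leq C \int_{B_r} |\nabla v|^2
\]
by Poincaré; then use $|x|^{2-N} \geq r^{2-N}$ on $B_r$ together with $\psi \geq 1/2$ near the origin to bound $\int_{B_r}|\nabla v|^2 \leq 2 r^{N-2} \int_{B_r} \varphi^2 \Gamma_\varphi |\nabla v|^2 \leq 2 r^N \Psi(r)$. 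For \eqref{eq:Psi_H^1_estimate}, the extra regularity ensures the distributional identity $-\div(\varphi^2 \nabla v) = (\lambda - \bar\lambda) u \varphi$ holds on all of $B_r$ with no singular mass on $\{u=0\}$. Testing against $v \Gamma_\varphi$, integrating over $B_r \setminus B_\eps$ and letting $\eps \to 0$ (the $\pa B_\eps$ contribution vanishes as $v(0) = 0$ and $u$ is Lipschitz), one obtains
\[
\int_{B_r}\varphi^2 \Gamma_\varphi |\nabla v|^2 \leq \Gamma_\varphi(r) \int_{\pa B_r} v \varphi^2 \pa_r v + \frac{N-2}{2 r^{N-1}} \int_{\pa B_r} v^2 ,
\]
using $\varphi^2 \nabla \Gamma_\varphi = -(N-2)|x|^{1-N} \hat r$. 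On $\pa B_r$ one has $v \varphi^2 \pa_r v = u \pa_r u - (\varphi'(r)/\varphi(r)) u^2$ with $|\varphi'(r)/\varphi(r)| \leq C r$; the divergence theorem together with $-\Delta u = \lambda u \chi_{\{u > 0\}}$ gives $\int_{\pa B_r} u \pa_r u \leq \int_{B_r} |\nabla u|^2$, while trace plus Poincaré (since $\{u = 0\}$ has positive density in $B_r$) yields $\int_{\pa B_r} u^2 \leq C r \int_{B_r} |\nabla u|^2$. Putting these together, $\int_{B_r} \varphi^2 \Gamma_\varphi |\nabla v|^2 \leq C r^{2-N} \int_{B_r}|\nabla u|^2$, which is \eqref{eq:Psi_H^1_estimate}.

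The main obstacle is the monotonicity step: one has to run a one-phase Alt--Caffarelli--Friedman computation in a weighted setting where the forbidden region is the exterior ball $\overline{B_1(-e_1)}$ rather than a half-space, and where the weight $\psi/|x|^{N-2}$ is not the Euclidean kernel $|x|^{2-N}$. The exponential prefactor $e^{Cr}$ is tuned precisely to absorb simultaneously the $O(r)$ deviations in the spherical spectrum ($\Sigma_r$ versus hemisphere), in the geometry of the forbidden region (exterior ball versus half-space), and in the weight ($\psi$ versus $1$, via Lemma \ref{lem psi}).
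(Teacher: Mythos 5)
Your proposal follows the same route as the paper: pass to $v=u/\varphi$, run a one-phase Alt--Caffarelli--Friedman argument with the weight $\varphi^2\Gamma_\varphi$, use the spherical-cap eigenvalue bound with linear error together with $|\psi-1|\le Cr$ from Lemma \ref{lem psi} to absorb the $O(r)$ defects into the factor $e^{Cr}$, and obtain \eqref{eq:Psi_H^1_estimate2} via Poincar\'e and \eqref{eq:Psi_H^1_estimate} by testing with $\Gamma_\varphi v$ and estimating the boundary terms. Within this common scheme, three points need repair. The most substantive is the monotonicity step, which you yourself identify as the crux but leave at the level of an assertion: to bound $J(r)=r^{-2}\int_{B_r}\varphi^2\Gamma_\varphi|\nabla v|^2$ by boundary data, testing $-\div(\varphi^2\nabla v)\le 0$ against $v$ is not enough, since that only controls the unweighted energy $\int_{B_r}\varphi^2|\nabla v|^2$, while the weight $\Gamma_\varphi\sim|x|^{2-N}$ blows up at the origin. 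One must test against $\Gamma_\varphi v$ and combine with the equation \eqref{eq:fundamental_eq} for $\Gamma_\varphi$ tested by $v^2/2$, which is exactly how the paper reaches \eqref{eq:aux_for_later}; after that, "absorbing the $O(r)$ errors" is not automatic but follows from the explicit computation with $\gamma(t)=\sqrt{\left(\frac{N-2}{2}\right)^2+t}-\frac{N-2}{2}$, using $\gamma(N-1)=1$ and $\gamma'(N-1)=1/N$ to get $\gamma\bigl(\psi^2(r)\lambda_1(\omega_r)\bigr)/\psi(r)\ge 1-Cr$.

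Two further inaccuracies are fixable but worth flagging. First, the claim that under the extra hypotheses $-\div(\varphi^2\nabla v)=(\lambda-\bar\lambda)u\varphi$ holds on $B_r$ "with no singular mass on $\{u=0\}$" is false in general: already a Dirichlet eigenfunction of a subdomain extended by zero carries a negative surface measure on its free boundary. This does not harm your displayed inequality, because only the subsolution inequality tested against the nonnegative function $\Gamma_\varphi v$ is needed; and for the term $\int_{\partial B_r}u\,\partial_\nu u\le\int_{B_r}|\nabla u|^2$ the finite-perimeter hypothesis is used precisely to integrate by parts over $\{u>0\}\cap B_r$, as the paper does, rather than to exclude singular mass. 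Second, the bound $\lambda_1^{\mathrm{sph}}(\Sigma_r)\ge N-1-Cr$ is not a consequence of the Friedland--Hayman inequality (which concerns complementary phases); it is the perturbation statement for caps slightly larger than a hemisphere, proved in the paper's Lemma \ref{lem spher cap} via simplicity of the eigenvalue and a shape derivative, and your argument should invoke that (true) statement instead.
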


We start by stating and proving an estimate of the first eigenvalue of spherical caps, and a Poincar\'e-type inequality.

\begin{remark}
	In dimension $N=2$ we need to change the definition of function $\Psi$ in \eqref{eq:Psi} as follows:
	\[
		\Psi(r) = e^{ C r } \frac{1}{r^2} \int_{B_r} \varphi^2 \left|\nabla \left(\frac{u}{\varphi} \right)\right|^2.
	\]
	The proof follows by similar computations. 
\end{remark}

\begin{lemma}[Estimates for eigenvalues]\label{lem spher cap}
Consider the spherical caps
\[
	\omega_r := \partial B_1 \setminus \overline{B_{1/r}(-e_1/r)} = \left\{ y\in \partial B_1: \left(y_1+\frac{1}{r}\right)^2+\sum_{i=2}^N y_i^2 >\frac{1}{r^2}\right\} = \left\{ y \in \partial B_1 : y_1 > -\frac{r}{2} \right\}
\]
and let $\lambda_1(\omega_r)$ stand for the first Dirichlet eigenvalue of the Laplace-Beltrami operator on $\omega_r$:
\[
	\lambda_1(\omega_r) = \inf_{u \in H^1_0(\omega_r)} \frac{\int_{\omega_r} |\nabla_T u|^2 }{\int_{\omega_r} |u|^2 },
\]
where $\nabla_T u$ is the tangential gradient of $u$. Then there exist $\bar r=\bar r(N)$ and  $C=C(N)>0$ such that
\begin{equation}\label{eqn eig est}
	N - 1 -Cr \leq \lambda_1(\omega_r) \leq N-1\qquad \text{for $r\in(0,\bar r)$.}
\end{equation}

\end{lemma}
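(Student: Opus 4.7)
My plan is to prove the two bounds by very different arguments. The upper bound $\lambda_1(\omega_r)\le N-1$ is immediate from domain monotonicity: the open hemisphere $\omega_0=\{y\in\partial B_1:y_1>0\}$ satisfies $\overline{\omega_0}\subset \omega_r$ for every $r>0$, and the first Dirichlet eigenfunction of the Laplace--Beltrami operator on $\omega_0$ is $y\mapsto y_1$ with eigenvalue $N-1$. Hence $y_1^+:=\max\{y_1,0\}$ is a Lipschitz function compactly supported in $\omega_r$, it belongs to $H^1_0(\omega_r)$, and its Rayleigh quotient equals $N-1$, which yields the upper bound.

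For the lower bound $\lambda_1(\omega_r)\ge N-1-Cr$ I would realize $\omega_r$ as the image of $\omega_0$ under a diffeomorphism close to the identity and pull back test functions. Using polar coordinates $(\theta,\hat y)$ on $\partial B_1$ centered at $e_1$, with $\theta\in[0,\pi]$ the geodesic distance from $e_1$ and $\hat y\in\mathbb{S}^{N-2}$ the angular direction, one has $\omega_r=\{\theta<\theta_r\}$ with $\theta_r=\arccos(-r/2)=\pi/2+\arcsin(r/2)$, so $\alpha:=2\theta_r/\pi=1+O(r)$. Define the smooth diffeomorphism $F_r(\theta,\hat y):=(\alpha\theta,\hat y)$ from $\omega_0$ to $\omega_r$; for any $u\in H^1_0(\omega_r)$ the pullback $v:=u\circ F_r$ lies in $H^1_0(\omega_0)$, since $u$ vanishes at $\theta=\theta_r$ forces $v$ to vanish at $\theta=\pi/2$. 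Using the spherical volume element $\sin^{N-2}\theta\,d\theta\,d\sigma_{\mathbb{S}^{N-2}}$ and $|\nabla_T u|^2=(\partial_\theta u)^2+\sin^{-2}\theta\,|\nabla_{\hat y}u|^2$, the change of variables expresses the Rayleigh quotient for $u$ on $\omega_r$ as a Rayleigh quotient for $v$ on $\omega_0$, weighted by $\alpha^{\pm1}$ and by the ratio $\sin(\alpha\theta)/\sin\theta$. Once this ratio is shown to be $1+O(r)$ uniformly on $[0,\pi/2]$, a direct computation yields
\[
\int_{\omega_r}|\nabla_T u|^2\,d\sigma\ge(1-Cr)\int_{\omega_0}|\nabla_T v|^2\,d\sigma,\quad \int_{\omega_r}u^2\,d\sigma\le(1+Cr)\int_{\omega_0}v^2\,d\sigma,
\]
and infimizing over $u$ (equivalently over $v\in H^1_0(\omega_0)$) gives $\lambda_1(\omega_r)\ge(1-C'r)\lambda_1(\omega_0)=(N-1)(1-C'r)$.

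The main technical point I anticipate is the uniform control of $\sin(\alpha\theta)/\sin\theta$ near $\theta=0$, where both numerator and denominator vanish. Writing $\sin(\alpha\theta)/\sin\theta=\alpha\cdot[\mathrm{sinc}(\alpha\theta)/\mathrm{sinc}(\theta)]$ with $\mathrm{sinc}(x):=\sin(x)/x$ (extended by $1$ at $0$) makes the issue transparent: $\mathrm{sinc}$ is real-analytic and strictly positive on $[0,\pi/2]$, so the map $(\alpha,\theta)\mapsto \sin(\alpha\theta)/\sin\theta$ is smooth on a neighborhood of $\{1\}\times[0,\pi/2]$, and the mean value theorem gives $|\sin(\alpha\theta)/\sin\theta-1|\le C|\alpha-1|\le C'r$ uniformly, with $C=C(N)$. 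With this pointwise bound in hand, the remainder of the argument is scalar arithmetic, and the two bounds combine into the claim.
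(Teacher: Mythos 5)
Your proposal is correct, and for the lower bound it takes a genuinely different route from the paper. The upper bound is essentially the same in both (domain inclusion, with the hemisphere eigenfunction $y_1$ and eigenvalue $N-1$; your explicit test function $y_1^+$ is fine since $-\Delta_{\partial B_1}y_1=(N-1)y_1$ and $y_1$ vanishes on the equator, so its Rayleigh quotient is exactly $N-1$). For the lower bound, the paper first reduces to a one-dimensional weighted Rayleigh quotient in the polar angle (by symmetry of the cap, following Sperner), then argues that $r\mapsto\lambda_1(\omega_r)$ is monotone and, by simplicity of the first eigenvalue, differentiable at $r=0$, so that $\lambda_1(\omega_r)=N-1-Ar+o(r)$, with the coefficient $A$ identified through the Hadamard shape-derivative formula applied to the angular dilation $\theta\mapsto\frac{2\theta_r}{\pi}\theta$. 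You use the very same dilation, but quantitatively rather than infinitesimally: you pull back test functions from $\omega_r$ to the hemisphere and control the distortion of the metric weights through the uniform bound $\sin(\alpha\theta)/\sin\theta=1+O(r)$ on $[0,\pi/2]$, which your sinc argument establishes correctly (the ratio $\mathrm{sinc}(\alpha\theta)/\mathrm{sinc}(\theta)$ is smooth near $\alpha=1$ and $\mathrm{sinc}\ge 2/\pi$ there). Your version is more elementary and self-contained: it avoids invoking shape-derivative theory and the simplicity/differentiability of the eigenvalue branch, and it yields the linear bound $N-1-Cr$ with an in-principle explicit constant; what it gives up is the exact first-order coefficient $A$, which the paper's computation provides (though only the sign and an upper bound on the slope are needed for \eqref{eqn eig est}). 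Two minor points to tighten in a written version: justify that composition with the smooth diffeomorphism $F_r$ of the closed hemisphere onto $\overline{\omega_r}$ maps $H^1_0(\omega_r)$ into $H^1_0(\omega_0)$ (or simply run the estimate for $u\in C^\infty_c(\omega_r)$ and pass to the infimum), and note that the weight for the angular part of the gradient is $\sin^{N-4}$, so the two-sided bound on $\sin(\alpha\theta)/\sin\theta$ is needed (which you have), with the final constant depending on $N$ through these powers.
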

\begin{proof}
The sets $\omega_r$ are invariant under rotations with respect to the first axis. As a result, the first eigenfunction depends only on $\theta=\arccos \langle y,e_1\rangle \in [0,\pi]$, the polar angle with $e_1$ (see \cite{Spe73}). We have
\begin{equation}\label{inequ eigen}
	\lambda_1(\omega_r) = \inf \left\{ \frac{\int_0^{\theta_r } (\sin \theta)^{N-2} | w'(\theta) |^2\, d\theta }{\int_0^{\theta_r} (\sin \theta)^{N-2} w^2(\theta)\, d\theta }\left| \begin{array}{l} w \in H^1\left([0,\theta_r]\right),\\ 
w\left(\theta_r\right) = 0\end{array}\right.\right\}, 
\end{equation}
where $\theta_r>0$ is
\[
	\theta_r = \arccos\left( -\frac{r}{2} \right) = \frac{\pi}2 +\frac{r}2+ O(r^3)
\]
for $r>0$ small enough. The first eigenvalue of $\omega_r$ is simple, and the corresponding eigenfunction is a multiple of the unique positive solution $w=w_r$ of
\[
\begin{cases}
-((\sin \theta)^{N-2}w')'=\lambda_1(\omega_r) (\sin \theta)^{N-2} w &\text{in } (0,\theta_r),\\
w'(0)=0,\   w(\theta_r)=0, \ w(0) = 1. 
\end{cases}
\]
A direct computation shows that when $r = 0$, that is $\theta_r = \pi/2$, we have
\[
w_0= \cos \theta \qquad \text{and} \qquad \lambda_1(\omega_0) = N-1.
\]
By set inclusion we can deduce that the function $r \mapsto \lambda_1(\omega_r)$ is monotone decreasing in $r$; moreover, as the first eigenvalue is simple, the function $r \mapsto \lambda_1(\omega_r)$ is differentiable at $r=0$. Thus the limit for $r \to 0$ exists and we have
\[
	\lambda_1(\omega_r) = N-1 -Ar + o(r)
\]
for a positive constant $A = A(N)$ that depends only on the dimension.

We can be more precise, by giving an explicit value for the constant in the Taylor expansion of $\lambda_1(\omega_r)$. To this aim, we make use of a shape derivative of the domain $\omega_r$.  We introduce the family of smooth diffeomorphisms $\Phi \in C^\infty([0,1] \times [0,\pi];[0,\pi])$, defined as 
\[
	\Phi(r,\theta)=\frac{2\theta_r}{\pi}\theta.
\]
We observe that $\Phi(0,\theta)=\theta$ (that is, $\Phi(0,\cdot )$ is the identity), while for any $r > 0$, $\Phi$ maps the set $[0,\pi/2]$ to the set $[0,\theta_r]$. Moreover we have $\frac{\partial \Phi}{\partial r}(0,\theta)=\frac{\theta}{\pi}$. Applying the theory of domain variation (see e.g. \cite[Th\'eor\`eme 5.7.1]{HePi}) we find that
\[
A =\frac{d}{dr}\lambda_1(\omega_r)|_{r=0}= - \frac{\int_{\partial \omega_0} (w_0')^2 \partial_r \Phi\left(0,\frac{\pi}{2}\right)}{\int_{ \omega_0} (w_0)^2}<0. \qedhere
\]
\end{proof}

Next we state and prove an inequality of Poincaré-type for $H^1$ functions that equal to zero on a ball.

\begin{lemma}[Poincar\'e-type inequality] \label{lem poinc}
For any $R>0$, there exists a constant $C_P = C_P(N, R)$ such that
\begin{equation*}
	\frac{1}{r}\int_{\partial B_r} u^2 + \frac{1}{r^2} \int_{B_r} u^2  \leq C_P  \int_{B_r} |\nabla u|^2
\end{equation*}
for any $r \in (0,R]$ and $u \in H^1(B_r)$ with $u =0$ in $B_r \cap \overline{B_1(-e_1)}$. 
\end{lemma}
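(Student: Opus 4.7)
The plan is to reduce the inequality to a standard Poincar\'e-trace inequality on the fixed domain $B_1$ via scaling, exploiting a monotonicity of the vanishing sets in $r$.

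First I would perform the change of variables $v(y) := u(ry)$ for $y \in B_1$. Elementary computations give
\[
\frac{1}{r}\int_{\partial B_r} u^2 = r^{N-2}\int_{\partial B_1} v^2,\quad \frac{1}{r^2}\int_{B_r} u^2 = r^{N-2}\int_{B_1} v^2,\quad \int_{B_r} |\nabla u|^2 = r^{N-2} \int_{B_1} |\nabla v|^2,
\]
so that, after dividing out the common factor $r^{N-2}$, the claim reduces to finding $C_P = C_P(N,R)$ such that
\[
\int_{\partial B_1} v^2 + \int_{B_1} v^2 \le C_P \int_{B_1} |\nabla v|^2
\]
whenever $v \in H^1(B_1)$ vanishes on the set $K_r := B_1 \cap \overline{B_{1/r}(-e_1/r)}$, uniformly for $r \in (0, R]$.

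Next I would observe a key geometric monotonicity. Writing $B_{1/r}(-e_1/r) = \{y : y_1 < -\frac{r}{2}|y|^2\}$, it is immediate that the family $\{B_{1/r}(-e_1/r)\}$ is nondecreasing as $r$ decreases: both balls are internally tangent to $\{y_1 = 0\}$ at the origin from the side $\{y_1 \le 0\}$, and the one of larger radius contains the one of smaller radius. Consequently $K_r \supset K_R$ for every $r \in (0, R]$, so every admissible $v$ vanishes on the fixed set $K_R$, which has positive Lebesgue measure.

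Finally I would invoke the standard Poincar\'e inequality on $B_1$ for $H^1$ functions vanishing on a fixed set of positive measure (proved, e.g., by a routine compactness/contradiction argument), together with the continuity of the trace $H^1(B_1) \hookrightarrow L^2(\partial B_1)$. The first yields $\int_{B_1} v^2 \le C \int_{B_1} |\nabla v|^2$ with $C = C(N,K_R) = C(N,R)$, and combining with the trace inequality bounds $\int_{\partial B_1} v^2$ by $\|v\|_{H^1(B_1)}^2$, which by the previous step is controlled by $\int_{B_1}|\nabla v|^2$.

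There is no real obstacle here: the only subtlety is ensuring the Poincar\'e constant does not degenerate as $r$ varies over $(0, R]$, and this is exactly what the monotonicity $K_r \supset K_R$ guarantees, since it reduces the whole family of vanishing conditions to the single fixed condition $v|_{K_R} = 0$. (In dimension $N=2$ the scaling factor $r^{N-2}$ is trivial, but the same argument applies verbatim.)
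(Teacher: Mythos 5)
Your proof is correct, and it takes a genuinely different route from the paper's. After the same rescaling $v(y)=u(ry)$ (your scaling identities and the identification of the vanishing set $B_1\cap\overline{B_{1/r}(-e_1/r)}$ are all accurate), the paper argues by contradiction and compactness over the whole family: it takes sequences $v_n$ and $r_n\to\tilde r\in[0,R]$ with $\int_{B_1}|\nabla v_n|^2\to 0$, extracts a nonzero constant limit, and then has to identify the limit of the vanishing sets, distinguishing the case $\tilde r>0$ from the degenerate case $\tilde r=0$ where the balls $\overline{B_{1/r_n}(-e_1/r_n)}$ flatten out to the half-space $\{y_1\le 0\}$. You instead exploit the elementary observation that $B_{1/r}(-e_1/r)=\{y_1<-\tfrac r2|y|^2\}$ is monotone increasing as $r$ decreases (all these balls being tangent to $\{y_1=0\}$ at the origin from the same side), so every admissible $v$ with $r\in(0,R]$ vanishes on the single fixed set $K_R=B_1\cap\overline{B_{1/R}(-e_1/R)}$, which has positive measure; then one application of the standard Poincar\'e inequality for $H^1(B_1)$-functions vanishing on a fixed set of positive measure, combined with the trace inequality on $\partial B_1$, gives the uniform constant $C_P(N,R)$. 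What your approach buys is a cleaner and more robust handling of the uniformity in $r$: it dispenses with the sequential compactness in $r$ and with the set-convergence step (which the paper treats rather briefly), at the cost of relying on the specific monotone geometry of the exterior balls; the paper's compactness argument is less structure-dependent but yields the same non-explicit constant.
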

\begin{proof}
We start with a change of variable, letting $v(x) = u(rx)$ we find that the statement of the result is equivalent to showing that
\[
	\int_{\partial B_1} v^2 + \int_{B_1} v^2  \leq C  \int_{B_1} |\nabla v|^2
\]
for any  $v \in H^1(B_1)$ with $v =0$ in $B_1 \cap \overline{B_{1/r}(-e_1/r)}$, $r \in (0,R)$. Assume, by contradiction, that there exist sequences $\{v_n\}\subset H^1(B)$ and $r_n \to \tilde r\in [0,R]$ such that $v_n = 0$ on $B_1 \cap \overline{B_{1/r_n}(-e_1/r_n)}$, and
\[
	\int_{\partial B_1} v_n^2 + \int_{B_1} v_n^2  = 1 \quad \text{while} \quad \int_{B_1} |\nabla v_n|^2 \to 0.
\]
We conclude that the sequence $\{v_n\}$ converges in $H^1(B_1)$ to a non-zero constant function $v \in H^1(B_1)$. On the other hand, by taking the limit of the sequence of sets $\{B_1 \cap \overline{B_{1/r_n}(-e_1/r_n)}\}$, it must be that $v = 0$ in $B_1 \cap \overline{B_{1/\tilde r}(-e_1/\tilde r)}$ if $\tilde r>0$, or $v = 0$ in $B_1 \setminus \{ x_1 \leq 0\}$ ir $\tilde r=0$, a contradiction.
\end{proof}

We state and prove a useful consequence of the previous inequality.

\begin{corollary}\label{cor poin}
There exist $C = C(N, \bar \lambda)$ and $\tilde r = \tilde r(N, \bar \lambda)>0$, such that
\begin{equation*}
\int_{B_r} |\nabla u|^2  \leq C \int_{B_r} \psi^2 \left|\nabla \left(\frac{u}{\varphi} \right)\right|^2
\end{equation*}
for any $r \in (0,\tilde r)$ and $u \in H^1(B_{r})$, with $u =0$ in $B_r \cap \overline{B_1(-e_1)}$.
\end{corollary}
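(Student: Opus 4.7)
The plan is to compare the two sides directly by an integration by parts, exploiting that both $\varphi$ and $\psi$ equal $1$ at the origin (with $\nabla \varphi(0) = 0$ since $\varphi$ is smooth, radial and attains its maximum there), so that near $0$ one has $\varphi,\psi \approx 1$ and $|\nabla \varphi| = O(r)$. The error introduced can then be absorbed via the Poincar\'e-type inequality of Lemma \ref{lem poinc}, which is exactly where the hypothesis $u = 0$ on $B_r \cap \overline{B_1(-e_1)}$ enters.

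Concretely, setting $b := \psi^2/\varphi^2$ and expanding the chain-rule identity
\begin{equation*}
\psi^2 \left|\nabla \tfrac{u}{\varphi}\right|^2 = b\,|\nabla u|^2 - \frac{b}{\varphi}\nabla(u^2)\cdot\nabla\varphi + \frac{b}{\varphi^2}\,u^2|\nabla\varphi|^2,
\end{equation*}
the first step is to integrate the cross term by parts on $B_r$ and use $-\Delta\varphi = \bar\lambda \varphi$ to compute $\div(b\nabla\varphi/\varphi) = \nabla b\cdot\nabla\varphi/\varphi - b|\nabla\varphi|^2/\varphi^2 - \bar\lambda b$. After the two $|\nabla\varphi|^2$ contributions cancel, one is left with
\begin{equation*}
\int_{B_r} b\,|\nabla u|^2 = \int_{B_r}\psi^2\left|\nabla\tfrac{u}{\varphi}\right|^2 + \int_{\partial B_r} b\, u^2\,\frac{\partial_\nu\varphi}{\varphi} - \int_{B_r} u^2\,\frac{\nabla b \cdot \nabla \varphi}{\varphi} + \bar\lambda\int_{B_r} b\, u^2.
\end{equation*}

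The second step is to show that each of the three correction terms is of size $O(r^2)\int_{B_r}|\nabla u|^2$. Lemma \ref{lem psi} guarantees that $\psi$ is Lipschitz with $\psi(0)=1$, and the smoothness of $\varphi$ with $\nabla\varphi(0)=0$ gives $|\nabla\varphi| \leq Cr$ on $B_r$ and ensures $b$ is uniformly close to $1$ (in particular $b \geq 1/2$) and Lipschitz on $B_r$, provided $r \leq \tilde r$ is small. Combining these pointwise bounds with the two consequences of Lemma \ref{lem poinc}, namely $\int_{B_r} u^2 \leq Cr^2\int_{B_r}|\nabla u|^2$ and $\int_{\partial B_r} u^2 \leq Cr\int_{B_r}|\nabla u|^2$, each of the three error terms is readily dominated by $Cr^2\int_{B_r}|\nabla u|^2$: the boundary term pairs a factor $r$ from $|\partial_\nu\varphi|$ with the trace inequality, while the bulk terms pair at least one factor $r$ (from $|\nabla \varphi|$ in one case, from $\bar\lambda$ being absorbed into the constant in the other) with the $L^2$ Poincar\'e bound.

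Putting everything together yields $\tfrac12 \int_{B_r}|\nabla u|^2 \leq \int_{B_r}\psi^2|\nabla(u/\varphi)|^2 + Cr^2\int_{B_r}|\nabla u|^2$, and shrinking $\tilde r$ so that $Cr^2 < 1/4$ allows absorption of the last term on the left, giving the conclusion with constant $C = 4$. The main subtlety of the argument is the bookkeeping of orders in $r$: one must use not merely that $\varphi$ is close to $1$ but that $|\nabla\varphi|$ already vanishes to order $r$, so that each correction acquires the extra factor of $r$ needed for absorption. Beyond this, the integration by parts is entirely routine, since all the coefficients are smooth (or at least Lipschitz) inside $B_r$ and $u^2 \in W^{1,1}$.
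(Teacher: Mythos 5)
Your proof is correct, but it follows a genuinely different (and heavier) route than the paper's. The paper's argument is a two-line pointwise estimate: writing $\nabla u=\varphi\nabla(u/\varphi)+u\nabla\varphi/\varphi$ and using $|a+b|^2\le 2|a|^2+2|b|^2$, it bounds $\int_{B_r}|\nabla u|^2$ by $2\int_{B_r}\varphi^2|\nabla(u/\varphi)|^2+2\|\nabla\varphi/\varphi\|_{L^\infty(B_{\bar R})}^2\int_{B_r}u^2$, converts $\varphi^2$ into $C\psi^2$ via Lemma \ref{lem psi}, controls $\int_{B_r}u^2\le C_P r^2\int_{B_r}|\nabla u|^2$ by Lemma \ref{lem poinc}, and absorbs for small $r$; it needs neither an integration by parts, nor the equation $-\Delta\varphi=\bar\lambda\varphi$, nor $\nabla\varphi(0)=0$, nor the trace part of Lemma \ref{lem poinc}, since all the smallness comes from the factor $r^2$ in the Poincar\'e inequality. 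You instead turn the same decomposition into an exact identity by integrating the cross term by parts against $b=\psi^2/\varphi^2$ and using the eigenvalue equation, after which the $|\nabla\varphi|^2$ contributions cancel and three error terms remain; these you correctly estimate by $O(r^2)\int_{B_r}|\nabla u|^2$ using both parts of Lemma \ref{lem poinc}, the bound $|\nabla\varphi|\le Cr$, and the Lipschitz regularity of $\psi$ from Lemma \ref{lem psi}, before performing the same absorption. Your identity yields a slightly cleaner constant (and the boundary term could in fact be dropped outright, since $\partial_\nu\varphi=\varphi'(r)\le 0$ on $\partial B_r$), at the cost of extra bookkeeping and of the regularity checks ($u^2\in W^{1,1}$, $b$ Lipschitz with bounded gradient) needed to justify the integration by parts; since the value of the constant is immaterial here, the paper's elementary triangle-inequality argument is the shorter path to the same conclusion.
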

\begin{proof}
	The result follows by a chain of straightforward inequalities. We have, for $r\in (0,\bar R]$,
	\[\begin{split}
		\int_{B_r} |\nabla u|^2 &= \int_{B_r} \left|\nabla u - u \frac{\nabla \varphi}{\varphi} +  u \frac{\nabla \varphi}{\varphi}\right|^2 \leq 2\int_{B_r} \left|\nabla u - u \frac{\nabla \varphi}{\varphi} \right|^2 + 2 \int_{B_r} \left|u \frac{\nabla \varphi}{\varphi}\right|^2 \\
		 &\leq 2\int_{B_r} \varphi^2 \left|\nabla \left(\frac{u}{\varphi}\right) \right|^2 + 2\left\|\frac{\nabla \varphi}{\varphi}\right\|_{L^\infty(B_r)}^2\int_{B_r} u^2\\
		 &\leq 2C(\bar R)\int_{B_r} \psi^2 \left|\nabla \left(\frac{u}{\varphi}\right) \right|^2 + 2C_P(N,\bar R) r^2 \left\|\frac{\nabla \varphi}{\varphi}\right\|_{L^\infty(B_{\bar R})}^2 \int_{B_r} |\nabla u|^2,
	\end{split}\]
where we used Lemma \ref{lem psi} and Lemma \ref{lem poinc}. The result follows by rearranging the terms in the last inequality and choosing $\tilde r=\tilde r(N,\bar R)=\tilde r(N,\bar \lambda) > 0$ sufficiently small in such a way that
\[
	2C_P{\tilde r}^2 \left\|\frac{\nabla \varphi}{\varphi}\right\|_{L^\infty(B_{\bar R})}^2 \leq \frac12. \qedhere
\]
\end{proof}

\begin{proof}[Proof of Proposition \ref{lem acf one eig}] We start by showing the monotonicity of the function $\Psi$. First of all, let $w:= u/\varphi \in H^1(B_{\bar R})$, which satisfies
\[
	\begin{cases}
		-\div(\varphi^2 \nabla w) \leq 0 &\text{in $B_{\bar R}$}\\
		w = 0 &\text{in } B_{\bar R}\cap B_{1}(-e_1).
	\end{cases}
\]
We now show that in this case there exists $C > 0$ such that
	$\Psi(r)$ defined in \eqref{eq:Psi} is monotone nondecreasing in $r$, for $ r$ sufficiently small. To start with, by formally testing the equation for $w$ by $\Gamma_\varphi w$ and integrating in $B_r$, we see that
\begin{align}
	\int_{B_r} \varphi^2\Gamma_\varphi |\nabla w|^2  &\leq  \int_{\partial B_r} \varphi^2 \Gamma_\varphi w (\partial_\nu w) - \int_{B_r} \varphi^2 w \nabla w \cdot \nabla \Gamma_\varphi \nonumber\\
									&= \int_{\partial B_r}\varphi^2 \Gamma_\varphi w (\partial_\nu w)- \int_{B_r} \varphi^2 \nabla(\frac{w^2}{2})\cdot \nabla \Gamma_\varphi \label{eq:acf_aux1}
\end{align}
(to justify rigorously this computation, it is enough to take a sequence of mollifiers $\{\rho_m\}$, work with the regular function $\rho_m\ast u\to u$ and $w_m:=(\rho_m\ast u)/\varphi$, integrate by parts in the domain $B_r\setminus B_\eps$ and let first $\eps\to 0$ and then $m\to \infty$).
Now, by testing the equation for $\Gamma_\varphi$ - \eqref{eq:fundamental_eq} - by $w^2/2$, and integrating by parts, we find
\begin{equation}\label{eq:acf_aux2}
\int_{B_r} \varphi^2 \nabla(\frac{w^2}{2})\cdot \nabla \Gamma_\varphi=\int_{\partial B_r} \varphi^2 (\partial_\nu \Gamma_\varphi) \frac{w^2}{2} + \frac{w^2(0)}{2} \geq \int_{\partial B_r} \varphi^2 (\partial_\nu \Gamma_\varphi) \frac{w^2}{2}. 
\end{equation}
By plugging \eqref{eq:acf_aux2} into \eqref{eq:acf_aux1} and recalling the definition of $\Gamma_\varphi$ and $\psi$:
\begin{align}
	\int_{B_r} \frac{\psi}{|x|^{N-2}} |\nabla w|^2 &=	\int_{B_r} \varphi^2\Gamma_\varphi |\nabla w|^2\leq \int_{\partial B_r}\left( \varphi^2 \Gamma_\varphi w (\partial_\nu w)- \frac{1}{2} w^2 \varphi^2 (\partial_\nu \Gamma_\varphi) \right) \nonumber \\
									&= \int_{\partial B_r} \left(\varphi^2 \Gamma_\varphi w (\partial_\nu w) + \frac{N-2}{2r^{N-1}} w^2\right)= \int_{\partial B_r} \left(\frac{\psi}{|x|^{N-2}} w (\partial_\nu w) + \frac{N-2}{2|x|^{N-1}} w^2\right)   \label{eq:aux_for_later}.
\end{align}
We now compute the logarithmic derivative of $\Psi$ and find
\begin{align*}
	\frac{d}{dr} \log \Psi(r) &= C - \frac{2}{r} + \frac{\displaystyle \int_{\partial B_r} \frac{\psi(x)}{|x|^{N-2}} |\nabla w|^2}{\displaystyle\int_{B_r} \frac{\psi(x)}{|x|^{N-2}} |\nabla w|^2} \geq C - \frac{2}{r} + \frac{\displaystyle \int_{\partial B_r} \frac{\psi(x)}{|x|^{N-2}} |\nabla w|^2}{\displaystyle \int_{\partial B_r} \left(\frac{\psi}{|x|^{N-2}} w (\partial_\nu w) + \frac{N-2}{2|x|^{N-1}} w^2\right) }\\
		&=C-\frac{2}{r} + \frac{\displaystyle  \frac{\psi(r)}{r^{N-2}}\int_{\partial B_r}  |\nabla w|^2}{\displaystyle \frac{\psi(r)}{r^{N-2}}\int_{\partial B_r} w (\partial_\nu w) + \frac{N-2}{2r^{N-1}} \int_{\partial B_r} w^2 }
\end{align*}
Let $v=v^{(r)} := w(rx)$, which by assumption vanishes in the complementary of the set 
\[
\omega_r = \partial B_1 \setminus \overline{B_{1/r}(-e_1/r)} \subset \partial B_1.
\]
Then
\begin{align*}
	\frac{d}{dr} \log \Psi(r) & \geq C - \frac{2}{r} + \frac{\psi(r)}{r} \frac{\displaystyle \int_{\omega_r} |\nabla v|^2}{\displaystyle \int_{\omega_r} \left(\psi(r)v(\partial_\nu v)+\frac{N-2}{2}v^2\right)}\\
					&= C - \frac{2}{r} + \frac{1}{r\psi(r)} \frac{\displaystyle \int_{\omega_r} \left(\psi^2(r)(\partial_\nu v)^2 + \psi^2(r) |\nabla_\theta v|^2\right)}{\displaystyle \int_{\omega_r}\left(\psi(r)v(\partial_\nu v)+\frac{N-2}{2}v^2\right)}\\
					&\geq C - \frac{2}{r} + \frac{1}{r\psi(r)} \frac{\displaystyle \int_{\omega_r} \left(\psi^2(r)(\partial_\nu v)^2 + \psi^2(r)\lambda_1(\omega_r) v^2 \right)}{\displaystyle \int_{\omega_r}\left(\psi(r)v(\partial_\nu v)+\frac{N-2}{2}v^2\right)}.\\
\end{align*}
Since
\begin{align*}
\int_{\omega_r}\left(\psi(r)v(\partial_\nu v)+\frac{N-2}{2}v^2\right) \leq \int_{\omega_r} \left( \frac{\psi^2(r)}{2a(N-2)}(\partial_\nu v)^2+\frac{(N-2)(a+1)}{2}v^2 \right),
\end{align*}
by choosing $a>0$ such that 
\[
\frac{1}{2a(N-2)}=\frac{(N-2)(a+1)}{2\psi^2(r)\lambda_1(\omega_r)} \iff a=\frac{1}{N-2}\sqrt{\left(\frac{N-2}{2}\right)^2+\psi^2(r)\lambda_1(\omega_r) }-\frac{1}{2} = \frac{\gamma(\lambda_1(\omega_r)  \psi^2(r))}{N-2}  ,
\] 
where
\[
\gamma(t) := \sqrt{\left(\frac{N-2}{2}\right)^2 + t} - \frac{N-2}{2},
\]
we see that
\begin{align*}
	\frac{d}{dr} \log \Psi(r) &\geq C - \frac{2}{r} + \frac{2}{r \psi(r)} \gamma \left( \psi^2(r) \lambda_1(\omega_r) \right) \\
		&= \frac{2}{r} \left(-1 + \frac{C}{2} r  + \frac{1}{\psi(r)} \gamma \left( \psi^2(r) \lambda_1(\omega_r) \right)\right).
\end{align*}
Since $\gamma(N-1)=1$ and $\gamma'(N-1)=\frac{1}{N}>0$, by Lemmas \ref{lem psi} and \ref{lem spher cap} we have the existence of constants $C_1,C_2,C_3>0$ such that
\[
	\frac{\gamma\left( \psi^2(r) \lambda_1(\omega_r) \right)}{\psi(r)} \geq \frac{\gamma((1-C_1 r)(N-1-C_2r))}{1+C_1r}\geq 1-C_3 r.
\]
for any $r$ sufficiently small. In conclusion, by choosing $C:=2C_3$, we have that $\Psi$ is nondecreasing for small $r>0$.

Next we show \eqref{eq:Psi_H^1_estimate2}, which is actually a direct consequence of Lemma \ref{lem psi} and  Corollary \ref{cor poin}. Indeed we find
\begin{equation*}
	\frac{1}{r^N}\int_{B_r} |\nabla u|^2  \leq \frac{C}{r^N} \int_{B_r} \psi^2 \left|\nabla \left(\frac{u}{\varphi} \right)\right|^2 \leq \frac{C}{r^2} \int_{B_r} \frac{\psi^2}{|x|^{N-2}} \left|\nabla \left(\frac{u}{\varphi} \right)\right|^2.
\end{equation*} 

Finally we show \eqref{eq:Psi_H^1_estimate}. Using estimate \eqref{eq:aux_for_later}, we see that
\[
\begin{split}
\Psi(r) &\leq \frac{e^{Cr}}{r^2}\int_{\partial B_r} \left(\frac{\psi}{|x|^{N-2}} \left(\frac{u}{\varphi}\right)\partial_\nu \left(\frac{u}{\varphi}\right) + \frac{N-2}{2|x|^{N-1}}\left(\frac{u}{\varphi}\right)^2\right)\\
	&= \frac{e^{Cr}\psi(r)}{r^N}\int_{\partial B_r} \left(\frac{u}{\varphi}\right)  \partial_\nu \left(\frac{u}{\varphi}\right)+ \frac{e^{Cr}(N-2)}{2r^{N+1}\varphi^2(r)}\int_{\partial B_r} u^2\\
	&\leq  \frac{e^{Cr}\psi(r)}{r^N \varphi^2(r)} \int_{\partial B_r} u\partial_\nu u -\frac{e^{Cr}\psi(r) \varphi'(r)}{r^N \varphi^3(r)}  \int_{\partial B_r} u^2 + \frac{e^{Cr}(N-2)}{2r^{N+1}\varphi^2(r)}\int_{\partial B_r} u^2 \\
	&\leq \frac{e^{Cr}\psi(r)}{r^N \varphi^2(r)} \int_{\partial B_r} u\partial_\nu u + \frac{e^{Cr}}{r^N\varphi^2(r)}\left(\frac{(N-2)}{2r} + \frac{\psi(r) |\varphi'(r)|}{\varphi(r)} \right)\int_{\partial B_r} u^2
	\end{split}
\]
Multiplying the equation $-\Delta u = \lambda u$ by $u$ and integrate by parts in $B_r\cap \{u>0\}$ (since $\{u=0\}$ has locally finite perimeter, we can apply \cite[Section 5.8 - Theorem 1]{EvansGariepy}) yields to the identity 
\[
\int_{B_r}|\nabla u|^2=\int_{\{u>0\}\cap B_r}|\nabla u|^2=\lambda \int_{\{u>0\}\cap B_r} u^2+\int_{\partial (\{u>0\}\cap B_r)} u \partial_\nu u= \lambda \int_{B_r} u^2 + \int_{\partial B_r} u \partial_\nu u
\]
which in turns give us the estimate 
\[
\int_{\partial B_r} u \partial_\nu u\leq \int_{B_r} |\nabla u|^2.
\]
By Lemma \ref{lem poinc},
\[
	\int_{\partial B_r} u^2 \leq C_P r \int_{B_r} |\nabla u|^2,
\]
and we can conclude that
\[
	\Psi(r) \leq \frac{e^{Cr}}{r^N \varphi(r)^2} \left[ \psi(r)(1+ r^2 \lambda C_P) + C_P\left(\frac{(N-2)}{2} +\frac{\psi(r) |\varphi'(r)|r}{\varphi(r)} \right) \right] \int_{B_r} |\nabla u|^2
\]
finally yielding to
\[
	\Psi(r) \leq C\frac{1}{r^N } \int_{B_r} |\nabla u|^2
\]
for any $r\in (0,\tilde r)$.
\end{proof}

We cite a useful corollary that is a straightforward consequence of Proposition \ref{lem acf one eig}.

\begin{corollary}\label{cor acf one eig}
	Let $\Omega \subset \R^N$ be a connected open (and non-empty) set that enjoys the exterior sphere condition at any point of its boundary, which we assume to have locally finite perimeter. Assume, moreover, that at $x_0 \in \partial\Omega$ the exterior sphere has radius at least equal to $r_0$. Let $\lambda = \lambda_1(\Omega)$ be the first eigenvalue of the Laplacian with Dirichlet boundary conditions, and assume that $\lambda \leq \bar \lambda$. Let $u \in H^1_0(\Omega)$ be the corresponding eigenfunction. 
	There exist $C = C(N, \bar \lambda)$ and $\tilde r=\tilde r(N,\bar \lambda)$ such that
	\[
		\frac{1}{r^N}\int_{B_r(x_0)} |\nabla u|^2 \leq C \frac{1}{R^N}\int_{B_R(x_0)} |\nabla u|^2
	\]
	for any $0 < r < R \leq r_0 \tilde r$.
\end{corollary}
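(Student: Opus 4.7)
The plan is to reduce to the situation of Proposition \ref{lem acf one eig} by rescaling, and then chain together the monotonicity of $\Psi$ with the two-sided comparison between $\Psi(r)$ and $r^{-N}\int_{B_r}|\nabla u|^2$ provided by \eqref{eq:Psi_H^1_estimate2} and \eqref{eq:Psi_H^1_estimate}.

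First I would normalize: after a translation and rotation, assume $x_0=0$ and that the exterior ball is $B_{r_0}(-r_0 e_1)$. WLOG we may also assume $r_0$ is small enough that $r_0^2\bar\lambda\le\bar\lambda$ (if not, simply replace the exterior sphere condition by one with a smaller radius, which only makes the statement weaker in the quantity $r_0\tilde r$). Define the rescaled function $\tilde u(y):=u(r_0 y)$, extended by zero outside the rescaled domain $\tilde\Omega:=r_0^{-1}\Omega$. Since $u$ is the first Dirichlet eigenfunction, $u\ge0$ and $-\Delta u=\lambda u$ in $\Omega$, hence $\tilde u\ge 0$ and $-\Delta\tilde u=(r_0^2\lambda)\tilde u$ in $\{\tilde u>0\}=\tilde\Omega$. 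Extending by zero, an integration by parts against a nonnegative test function shows that the distributional Laplacian picks up a contribution $(\partial_\nu u)\mathcal{H}^{N-1}\lfloor\partial\tilde\Omega\le 0$, so $-\Delta\tilde u\le (r_0^2\lambda)\tilde u$ globally in the sense of distributions. Moreover $\tilde u\equiv 0$ on $\overline{B_1(-e_1)}$ by the exterior sphere condition, and $\{\tilde u=0\}$ inherits locally finite perimeter from $\partial\Omega$.

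With $\tilde\lambda:=r_0^2\lambda\le\bar\lambda$, all hypotheses of Proposition \ref{lem acf one eig} are satisfied for $\tilde u$ in some ball $B_{\bar R}$. Let $\tilde r=\tilde r(N,\bar\lambda)$ and $C=C(N,\bar\lambda)$ be given by that proposition, and let $\Psi$ denote the corresponding monotone quantity \eqref{eq:Psi}. For any $0<\rho_1<\rho_2\le\tilde r$, combining \eqref{eq:Psi_H^1_estimate2}, the monotonicity of $\Psi$, and \eqref{eq:Psi_H^1_estimate} yields
\[
\frac{1}{\rho_1^N}\int_{B_{\rho_1}}|\nabla\tilde u|^2\le C\,\Psi(\rho_1)\le C\,\Psi(\rho_2)\le \frac{C^2}{\rho_2^N}\int_{B_{\rho_2}}|\nabla\tilde u|^2.
\]

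Finally I would scale back. For $0<r<R\le r_0\tilde r$, set $\rho_i=r/r_0$ and $R/r_0$, both in $(0,\tilde r]$. The change of variables $x=r_0 y$ gives $\int_{B_\rho}|\nabla\tilde u|^2=r_0^{2-N}\int_{B_{\rho r_0}(x_0)}|\nabla u|^2$, and substituting into the inequality above produces exactly
\[
\frac{1}{r^N}\int_{B_r(x_0)}|\nabla u|^2\le \frac{C^2}{R^N}\int_{B_R(x_0)}|\nabla u|^2,
\]
with $C^2=C^2(N,\bar\lambda)$ as required. The only real verification needed is that Proposition \ref{lem acf one eig} truly applies to the extension of $\tilde u$ by zero — i.e.\ the distributional subsolution property across $\partial\tilde\Omega$ and the locally finite perimeter of $\{\tilde u=0\}$ — both of which follow from the first-eigenfunction structure and the standing assumption on $\partial\Omega$. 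Everything else is bookkeeping.
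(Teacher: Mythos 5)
Your proof is correct and takes essentially the same route as the paper: reduce by scaling to the case of a unit exterior ball (harmless since the eigenvalue only decreases when $r_0\le 1$), then chain \eqref{eq:Psi_H^1_estimate2}, the monotonicity of $\Psi$, and \eqref{eq:Psi_H^1_estimate} from Proposition \ref{lem acf one eig}, and scale back. The extra checks you spell out (the zero extension of the eigenfunction is a distributional subsolution across $\partial\Omega$, and $\{u=0\}$ has locally finite perimeter) are precisely the hypotheses the paper's proof leaves implicit, so nothing is missing.
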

\begin{proof}
By a change of variables, the problems reduces to the one where $r_0=1$. In such a case, by Proposition \ref{lem acf one eig}, we have the existence of $C,C',\tilde r$, depending only on $N$ and $\lambda$ such that, whenever $0 < r < R < \tilde r$,
\[
\frac{1}{r^N}\int_{B_r(x_0)}|\nabla u|^2\leq C\Psi(r)\leq C \Psi(R) \leq \frac{C'}{R^N}\int_{B_r} |\nabla u|^2,
\]
which concludes the proof.
\end{proof}
\section{Uniform bounds}\label{sec: main} 

In this section we prove Theorem \ref{thm:unif Lip}. Assume without loss of generality that it satisfies the uniform exterior sphere condition of radius larger than or equal to $1$. Recall that $\bar r>0$ denotes a value such that $\mathcal{P}_r(\Omega) \neq \emptyset$, for every $r\in [0,\bar r)$. In what follows, for $r\in (0,\bar r)$, we let $\mathbf{u}_r=(u_{1,r},\ldots, u_{k,r})$ be a nonnegative minimizer for $c_r$ (recall the characterization \eqref{eq:weak_characterization}), with $(\Omega_{1,r},\ldots, \Omega_{k,r}):=(\{u_{1,r}>0\},\ldots, \{u_{k,r}>0\})$ being an optimal partition. Recall also that properties (1)-(3) and (a)-(b) hold true. The main idea of the proof is to show that, if the eigenfunctions do not have uniformly bounded gradients, then it is possible to construct a competitor for the minimization problem that has a smaller energy, thus contradicting the minimality of $\mathbf{u}_r$.

The starting point is to prove uniform bounds  of the eigenfunctions in the $H^1$ and the $L^\infty$ norms.

\begin{lemma}\label{eq:uniformbounds_H^1_L^infty} There exist constants $C, \Lambda >0$ such that 
\[
\| \mathbf{u}_r\|_{H^1_0(\Omega)}, \|\mathbf{u}_r\|_{L^\infty(\Omega)} \leq C 
\]
and
\[
\lambda_1(\Omega) \le \lambda_{i,r}:= \lambda_{1}(\{u_{i,r}>0\}) \leq \Lambda \quad \forall i=1,\dots,k,
\]
for every $r\in (0,\bar r)$.
\end{lemma}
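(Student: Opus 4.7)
The plan is to reduce everything to a uniform upper bound on the eigenvalues $\lambda_{i,r}$, since once this is in hand the other two estimates follow from standard machinery. The key ingredient is the monotonicity of $c_r$ in $r$: whenever $0<r\leq r'<\bar r$ one has the inclusion $\mathcal{P}_{r'}(\Omega)\subseteq \mathcal{P}_r(\Omega)$, hence $c_r\leq c_{r'}$. Fix any $r_0\in (0,\bar r)$; then for every $r\in (0,r_0]$ we obtain
\[
c_r\;\leq\; c_{r_0}\;=:\;C_0\;<\;+\infty.
\]
(Restricting to this range suffices for the asymptotic analysis as $r\to 0^+$, which is the only regime used later.) Combining this with property (b) recalled above, which guarantees $c_r=\sum_{i=1}^k \lambda_{i,r}$, and with the nonnegativity of each $\lambda_{i,r}$, we conclude $\lambda_{i,r}\leq C_0=:\Lambda$ for every $i$ and every such $r$. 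The lower bound $\lambda_1(\Omega)\leq \lambda_{i,r}$ is immediate from the classical monotonicity of the first Dirichlet eigenvalue under set inclusion, applied to $\Omega_{i,r}\subseteq \Omega$.

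Next, the $H^1_0$ bound is just energy accounting. Using property (b), $-\Delta u_{i,r}=\lambda_{i,r}u_{i,r}$ in $\Omega_{i,r}$ with $u_{i,r}\in H^1_0(\Omega_{i,r})\subset H^1_0(\Omega)$, and the $L^2$ normalization $\int_\Omega u_{i,r}^2=1$. Testing the equation against $u_{i,r}$ and integrating by parts (legitimate after extending by zero) yields
\[
\int_\Omega |\nabla u_{i,r}|^2 \;=\; \lambda_{i,r}\int_\Omega u_{i,r}^2 \;=\; \lambda_{i,r}\;\leq\;\Lambda,
\]
giving a uniform $H^1_0$ bound. For the $L^\infty$ bound, I would observe that, after extension by zero, $u_{i,r}$ is a nonnegative weak subsolution of $-\Delta u\leq \Lambda u$ on $\Omega$ in $H^1_0(\Omega)$, so a standard De Giorgi / Moser iteration (e.g.\ the subsolution estimate in Gilbarg--Trudinger) produces
\[
\|u_{i,r}\|_{L^\infty(\Omega)} \;\leq\; C(N,\Omega,\Lambda)\,\|u_{i,r}\|_{L^2(\Omega)} \;=\;C(N,\Omega,\Lambda),
\]
since $\Omega$ is bounded and $\Lambda$ does not depend on $r$.

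I do not anticipate a genuine obstacle: every tool needed is either already recorded in the companion paper \cite{STTZ2018} (existence and minimality of $\mathbf{u}_r$, the eigenfunction equation (b), the identity $c_r=\tilde c_r$) or entirely classical (monotonicity of $\lambda_1$, Moser iteration). The only point requiring a line of care is the uniformity of the bound on $c_r$ across the range $(0,\bar r)$; the monotonicity argument above gives it on any $(0,r_0]$ with $r_0<\bar r$, which is all that is needed for the proof of Theorem \ref{thm:unif Lip} and Corollary \ref{coro:unif Lip}, where only the limit $r\to 0^+$ is relevant.
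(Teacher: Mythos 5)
Your proposal is correct and follows essentially the same route as the paper: monotonicity of $r\mapsto c_r$ coming from the nestedness of $\mathcal{P}_r(\Omega)$ for the upper eigenvalue bound, domain monotonicity of $\lambda_1$ for the lower bound, the identity $\int_\Omega|\nabla u_{i,r}|^2=\lambda_{i,r}$ for the $H^1_0$ bound, and an iteration scheme (the paper cites Brezis--Kato, you use De Giorgi/Moser) applied to the global subsolution inequality $-\Delta u_{i,r}\le \Lambda u_{i,r}$ for the $L^\infty$ bound. The only cosmetic difference is that the paper bounds $c_r\le c_{\bar r}$ for all $r\in(0,\bar r)$ (implicitly taking $\bar r$ such that $\mathcal{P}_{\bar r}(\Omega)\neq\emptyset$), whereas you bound $c_r\le c_{r_0}$ on $(0,r_0]$ only, which is slightly weaker than the stated range but, as you note, suffices for the asymptotic analysis as $r\to 0^+$.
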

\begin{proof}
The lower bound on $\lambda_{i,r}$ follows from the monotonicity of the eigenvalues with respect to domain inclusion. On the other hand, since $r\mapsto \mathcal{P}_r(\Omega)$ is decreasing with respect to domain inclusion, then $r\mapsto c_r$ is monotone increasing and, in particular, $c_r\leq c_{\bar r}$ for $0\leq r<\bar r$ and
\[
\sum_{i=1}^k \int_{\Omega} |\nabla u_{i,r}|^2=\sum_{i=1}^k \lambda_{1}(\{u_{i,r}>0\})  \leq c_{\bar r}.
\]
Since $u_{i,r}\in H^1_0(\Omega)$ is a positive solution to $-\Delta u_{i,r}\leq \lambda_{1}(\{u_{i,r}>0\}) u_{i,r}$ in $\Omega$, the $L^\infty$-uniform bounds are a standard consequence of the Brezis-Kato iteration technique (see for instance the proof of Corollary 1.6 in \cite{NTTV2} for the precise details in this framework).\end{proof}

We assume from now on, by virtue of a contradiction argument, that the gradient of $\mathbf{u}_r$ is \emph{not} uniformly bounded. That is, there exist a sequence  $\{r_n\}\subset (0,\bar r)$, a sequence of minimizers $\{\mathbf{u}_n\}$ associated with $c_{r_n}$, and a sequence of indexes $\{i_n\}$ such that 
\begin{equation}\label{eq:contradiction_assumption}
	M_n :=  \max_{i=1,\dots,k} \|\nabla u_{i,n} \|_{L^\infty(\Omega)} = \|\nabla u_{i_n,n} \|_{L^\infty(\Omega)} \to +\infty \qquad \text{as $n \to +\infty$}.
\end{equation}
Up to a subsequence and a relabelling, we can suppose that $i_n=1$ for every $n$. In what follows, we work constantly under this assumption. 

\medskip
\noindent\textbf{Notation.} In what follows we take $\bar \lambda$, the constant appearing in Section \ref{sec: pre}, equal to $\Lambda$, the upper bound of the eigenvalues $\lambda_{i,r}$ (see Lemma \ref{eq:uniformbounds_H^1_L^infty}). Moreover, without loss of generality, we assume $\tilde r = 1$ in Corollary \ref{cor acf one eig}.

\begin{lemma}
We have $r_n\to 0$.
\end{lemma}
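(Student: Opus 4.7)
My plan is to argue by contradiction. Suppose $r_n\not\to 0$; then, passing to a subsequence, $r_n\geq r_*>0$ for some $r_*$. I will show this forces a uniform bound on $M_n$, contradicting \eqref{eq:contradiction_assumption}. The strategy combines three ingredients already developed in Section \ref{sec: pre}: Lemma \ref{lem max bound} localizes the maximum of $|\nabla u_{1,n}|$ near $\partial\Omega_{1,n}$, Corollary \ref{cor acf one eig} bounds $r^{-N}\int_{B_r}|\nabla u_{1,n}|^2$ at scales below the exterior sphere radius, and Corollary \ref{cor mean} converts an $L^2$--energy bound on a ball inside $\Omega_{1,n}$ into a pointwise gradient bound at its center.

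Concretely, since $i_n=1$ and $\Omega_{1,n}$ satisfies an exterior sphere condition (of radius at least $\min(r_*,1)=:\rho_*>0$, by property (2) together with the assumption on $\Omega$), Lemma \ref{lem max bound} applied to $u_{1,n}$ on $\Omega_{1,n}$ yields, via a diagonal extraction in $n$, a point $y_n\in\Omega_{1,n}$ with
\[
d_n:=\dist(y_n,\partial\Omega_{1,n})\to 0 \qquad\text{and}\qquad |\nabla u_{1,n}(y_n)|\geq c(N)\,M_n.
\]
Choose $z_n\in\partial\Omega_{1,n}$ with $|y_n-z_n|=d_n$. By properties (2)--(3) and the upper eigenvalue bound $\lambda_{1,n}\leq\Lambda=\bar\lambda$ from Lemma \ref{eq:uniformbounds_H^1_L^infty}, Corollary \ref{cor acf one eig} applies at $z_n$ with outer radius $R=\rho_*\tilde r$; combined with the uniform $H^1$--bound,
\[
\frac{1}{\rho^N}\int_{B_\rho(z_n)}|\nabla u_{1,n}|^2\leq \frac{C}{(\rho_*\tilde r)^N}\|\nabla u_{1,n}\|_{L^2(\Omega)}^2\leq C_1\qquad\forall\,\rho\in(0,\rho_*\tilde r],
\]
uniformly in $n$.

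To conclude, for $n$ large we have $B_{d_n}(y_n)\subset\Omega_{1,n}$ and $B_{d_n}(y_n)\subset B_{2d_n}(z_n)$ with $2d_n<\rho_*\tilde r$, so (enlarging $\bar\lambda$ to $2\Lambda$ so that Corollary \ref{cor mean} applies on $B_{d_n}(y_n)$ for $n$ large) I obtain
\[
|\nabla u_{1,n}(y_n)|^2\leq \frac{C}{d_n^N}\int_{B_{d_n}(y_n)}|\nabla u_{1,n}|^2 \leq \frac{C}{d_n^N}\int_{B_{2d_n}(z_n)}|\nabla u_{1,n}|^2\leq C\cdot 2^N C_1,
\]
contradicting $|\nabla u_{1,n}(y_n)|\gtrsim M_n\to\infty$. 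The main obstacle is purely bookkeeping, namely verifying that the hypotheses of the three tools are met on $\Omega_{1,n}$: this follows from properties (1)--(3), the upper bound on $\lambda_{1,n}$, and the assumption that $\Omega$ itself satisfies a uniform exterior sphere condition of radius $\geq 1$. No monotonicity or blow-up analysis beyond Section \ref{sec: pre} is needed; the argument essentially reflects the fact that for $r$ bounded away from zero, the machinery of Section \ref{sec: pre} already delivers a scale-invariant Lipschitz bound.
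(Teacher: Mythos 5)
Your proposal is correct, but it proves the lemma by a different route than the paper. The paper's own proof is essentially a one-line citation: if $r_n \to r_0 > 0$ along a subsequence, then all the sets $\{u_{i,n}>0\}$ satisfy a uniform exterior sphere condition of radius $\tfrac12 r_0$, and the barrier-based gradient estimate of \cite[Theorem 3.4]{STTZ2018} gives $\|\nabla u_{i,n}\|_{L^\infty} \leq C(\|u_{i,n}\|_{L^\infty} + \|\lambda_{i,n}u_{i,n}\|_{L^\infty})$ with $C$ depending only on the sphere radius; Lemma \ref{eq:uniformbounds_H^1_L^infty} then contradicts $M_n \to \infty$. You instead rederive the uniform Lipschitz bound from the paper's internal machinery: Lemma \ref{lem max bound} to place a near-maximal gradient point $y_n$ near $\partial\{u_{1,n}>0\}$, Corollary \ref{cor mean} to control $|\nabla u_{1,n}(y_n)|^2$ by the averaged Dirichlet energy on $B_{d_n}(y_n)$, and Corollary \ref{cor acf one eig} at the projected boundary point $z_n$ (where the exterior sphere radius is bounded below by $\rho_* = \min(r_*,1)$, using property (2) on the free boundary and the exterior sphere of $\Omega$ on the fixed boundary) to bound that average uniformly via the $H^1$ bound. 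This is exactly the scheme the paper deploys later in Lemma \ref{lem ratio}, anticipated here at the fixed scale $\rho_*$. What each approach buys: the paper's argument is shorter and needs nothing from Section \ref{sec: pre}, at the price of invoking the external barrier theorem whose constant degenerates as the sphere radius shrinks (which is precisely why it cannot give Theorem \ref{thm:unif Lip} itself); your argument is self-contained within Section \ref{sec: pre}, at the price of needing the additional hypotheses of Corollary \ref{cor acf one eig} (in particular the locally finite perimeter of the free boundary, available by property (3)) and the bookkeeping with $\bar\lambda$ (your replacement of $\Lambda$ by $2\Lambda$ for Corollary \ref{cor mean} is the right fix). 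Both yield the same contradiction with \eqref{eq:contradiction_assumption}, so your proof is a valid, if heavier, alternative.
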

\begin{proof}
Assume that the thesis is false. Then, up to striking out a subsequence, we have that $r_n \to r_0$ for some $r_0>0$. We recall that each $u_{i,n} \in H^1_0(\Omega_{i,r_n})$ solves $-\Delta u_{i,n}=\lambda_{i,r_n} u_{i,n}$ in $\Omega_{i,r_n}$. All of these sets satisfy a  $\frac12 r_0$-uniform exterior sphere condition, for any $n$ sufficiently large. By \cite[Theorem 3.4]{STTZ2018} we have that there exists a constant $C>0$ such that
\[
\|\nabla u_{i,n}\|_{L^\infty(\Omega_{i,r_n})}\leq C\left( \|u_{i,n}\|_{L^\infty(\Omega_{i,r_n})}+\|\lambda_{i,n}u_{i,n}\|_{L^\infty(\Omega_{i,r_n})}\right) .
\]
Since the right hand side is bounded by Lemma \ref{eq:uniformbounds_H^1_L^infty}, we obtain a contradiction.
\end{proof}

Now that we have established the behavior of the sequence $\{r_n\}$, we can introduce the quantities that will guide us in the proof of our main result. 

\begin{lemma}\label{lem: Rn o rn}
Let $C>0$ be the dimensional constant of Lemma \ref{lem max bound}. There exists a sequence $\{x_n\} \subset \{u_{1,n}>0\}$ such that
 \begin{equation}\label{eq:contradiction_assumption_xn}
	C M_n \leq |\nabla u_{1,n}(x_n)| \leq M_n
\end{equation}
and, moreover, 
\[
R_n := \dist(x_n,\pa \{u_{1,n}>0\}) =o(r_n)
\]
as $n \to \infty$.
\end{lemma}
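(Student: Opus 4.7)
The plan is to invoke Lemma \ref{lem max bound} applied to each eigenfunction $u_{1,n}$ on its positivity set, and then to perform a diagonal extraction in order to enforce simultaneously the lower bound on $|\nabla u_{1,n}(x_n)|$ and the smallness of $R_n$ relative to $r_n$. The whole argument is elementary, as the freedom to push $x_n$ as close to $\partial\Omega_{1,r_n}$ as we please is already built into the conclusion of Lemma \ref{lem max bound}.

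First, I would verify the hypotheses. For each $n$, the set $\Omega_{1,r_n}=\{u_{1,n}>0\}$ satisfies an exterior sphere condition at every one of its boundary points: at interior free-boundary points with radius $r_n$ by property (2) recalled at the beginning of the section, and at points of $\partial\Omega$ by the standing smoothness assumption on $\Omega$ (which we took to satisfy the uniform exterior sphere condition of radius $\ge 1$). Hence Lemma \ref{lem max bound} applied to $u_{1,n}$ on $\Omega_{1,r_n}$ supplies, for each fixed $n$, a sequence $\{y_{n,m}\}_{m\in\N}\subset \Omega_{1,r_n}$ with
\[
	\lim_{m\to\infty}\dist(y_{n,m},\partial \Omega_{1,r_n})=0 \quad\text{and}\quad \liminf_{m\to\infty}|\nabla u_{1,n}(y_{n,m})|\geq C\|\nabla u_{1,n}\|_{L^\infty(\Omega)}=CM_n,
\]
where $C=C(N)>0$ is the dimensional constant of Lemma \ref{lem max bound}.

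Next, by a diagonal extraction I would select, for every $n$, an index $m_n$ large enough that both
\[
	\dist(y_{n,m_n},\partial\Omega_{1,r_n})<\frac{r_n}{n} \quad\text{and}\quad |\nabla u_{1,n}(y_{n,m_n})|\geq \left(C-\frac{1}{n}\right)M_n
\]
hold, and set $x_n:=y_{n,m_n}$. The first inequality gives $R_n<r_n/n$, hence $R_n=o(r_n)$ as $n\to \infty$. The second, for all $n$ large enough (say $n\geq 2/C$), yields $|\nabla u_{1,n}(x_n)|\geq (C/2)M_n$; after absorbing the factor $1/2$ into the dimensional constant $C$ and restricting to a tail of the sequence, the claimed bound $CM_n\leq |\nabla u_{1,n}(x_n)|$ holds. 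The upper bound $|\nabla u_{1,n}(x_n)|\leq M_n$ is immediate from the very definition of $M_n$ as the $L^\infty$ norm of $\nabla u_{1,n}$. The only mildly delicate point is that Lemma \ref{lem max bound} provides a $\liminf$ rather than a pointwise estimate, so the constant $C$ in the present statement must be understood as (a fixed fraction of) the one coming from Lemma \ref{lem max bound}; this is harmless, since all subsequent uses of the sequence $\{x_n\}$ only require $|\nabla u_{1,n}(x_n)|$ to be comparable to $M_n$ up to a dimensional factor.
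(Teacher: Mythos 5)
Your proof is correct and follows essentially the same route as the paper, which simply applies Lemma \ref{lem max bound} to each $u_{1,n}$ (for $n$ fixed) and extracts, by a diagonal selection, a point $x_n$ with $\dist(x_n,\partial\{u_{1,n}>0\})<r_n/n$ while keeping $|\nabla u_{1,n}(x_n)|$ comparable to $M_n$; your verification of the exterior sphere hypothesis (radius $r_n$ at free-boundary points, radius $\geq 1$ on $\partial\Omega$) is exactly what is needed. Your remark that the $\liminf$ in Lemma \ref{lem max bound} only yields a fixed fraction of the dimensional constant is a fair and harmless point, since all later arguments use only the comparability $|\nabla u_{1,n}(x_n)|\gtrsim M_n$.
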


\begin{proof}
We can directly apply Lemma \ref{lem max bound} to each function $u_{1,n}$ (for $n$ fixed) to obtain the desired result.
\end{proof}

Now, let $y_n \in \partial \{u_{1,n} > 0\}$ be a projection of $x_n$ onto $\partial\{u_{1,n}>0\}$, so that $R_n=|x_n-y_n|$. 
We shall analyze the behavior of the sequence $\{x_n\}$ and of $\{y_n\}$. As a first step, we show that the sequence $\{x_n\}$ is very close to the free-boundary $\partial \Omega_{1,n} \cap \Omega$ and not to the fixed  boundary of $\Omega$. This is the content of the next result.

\begin{lemma}\label{lem ratio}
We have that $\dist (x_n,\partial \Omega)/r_n \to +\infty$. In particular $y_n \in \partial \Omega_{1,n} \setminus \partial \Omega$ and, moreover,
\[
	M_n^2 \leq C \frac{1}{r_n^N} \int_{B_{r_n}(y_n)} |\nabla u_{1,n}|^2 
\]
for a constant $C = C(N, \bar \lambda)>0$ and $n$ sufficiently large.
\end{lemma}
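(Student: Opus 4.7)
The plan is to prove the three assertions in a different order than their appearance in the statement: first I would show $y_n\in\partial\Omega_{1,n}\setminus\partial\Omega$; then I would derive the integral estimate; and finally I would use this estimate, together with a Caccioppoli-type bound near $\partial\Omega$, to rule out $\dist(x_n,\partial\Omega)=O(r_n)$.

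To show $y_n\notin\partial\Omega$, I would argue by contradiction: if $y_n\in\partial\Omega_{1,n}\cap\partial\Omega$, then, since $\Omega_{1,n}\subset\Omega$ and $\Omega$ satisfies the uniform exterior sphere condition of radius at least $1$, $\Omega_{1,n}$ inherits an exterior sphere of radius at least $1$ at $y_n$. Applying Corollary~\ref{cor acf one eig} at $y_n$ with $r_0=1$ and outer radius $R=1$, together with the uniform $H^1_0$-bound of Lemma~\ref{eq:uniformbounds_H^1_L^infty}, yields $r^{-N}\int_{B_r(y_n)}|\nabla u_{1,n}|^2\le C$ uniformly for every $r\in(0,1]$. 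The inclusion $B_{R_n}(x_n)\subset B_{2R_n}(y_n)$ combined with the mean-value inequality of Corollary~\ref{cor mean} on the interior ball $B_{R_n}(x_n)\subset\Omega_{1,n}$ then produces $|\nabla u_{1,n}(x_n)|^2\le C''$ uniformly, contradicting $|\nabla u_{1,n}(x_n)|\ge CM_n\to+\infty$.

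With $y_n$ on the interior free boundary, property~(2) from the introduction provides an exterior ball of radius $r_n$ at $y_n$. The integral estimate then follows from a three-step chain: Corollary~\ref{cor mean} on $B_{R_n}(x_n)\subset\Omega_{1,n}$, the inclusion $B_{R_n}(x_n)\subset B_{2R_n}(y_n)$, and Corollary~\ref{cor acf one eig} at $y_n$ with $r_0=r_n$ (taking $r=2R_n<r_n$, which is legitimate since $R_n=o(r_n)$, and $R=r_n$). Finally, to obtain $\dist(x_n,\partial\Omega)/r_n\to+\infty$ I would suppose by contradiction that $\dist(x_n,\partial\Omega)\le A\,r_n$ along a subsequence. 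A torsion-type barrier (comparing $u_{1,n}$ with the solution of $-\Delta v=\Lambda\|u_{1,n}\|_{L^\infty}$ in $\Omega$ with zero Dirichlet data, whose gradient at $\partial\Omega$ is uniformly bounded via Hopf's lemma on the smooth domain $\Omega$) yields $u_{1,n}(x)\le K\,\dist(x,\partial\Omega)$ in a uniform tubular neighborhood of $\partial\Omega$. Since then $\dist(y_n,\partial\Omega)\le(A+1)r_n$ and $u_{1,n}\le K(A+3)r_n$ on $B_{2r_n}(y_n)$, testing the subsolution inequality $-\Delta u_{1,n}\le\Lambda u_{1,n}$ against $u_{1,n}\phi^2\in H^1_0(\Omega)$ for a smooth cutoff $\phi$ supported in $B_{2r_n}(y_n)$ delivers the Caccioppoli-type bound $\int_{B_{r_n}(y_n)}|\nabla u_{1,n}|^2\le C\,r_n^N$; plugging this into the integral estimate produces $M_n^2\le C'$, contradicting $M_n\to+\infty$.

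The main obstacle I expect is establishing the uniform linear growth bound $u_{1,n}(x)\le K\,\dist(x,\partial\Omega)$ near $\partial\Omega$: it requires comparing the subsolution $u_{1,n}$ against a Hopf-type barrier whose normal-derivative behavior at $\partial\Omega$ is uniform in $n$, exploiting both the smoothness of $\Omega$ and the uniform $L^\infty$-bound on $u_{1,n}$ from Lemma~\ref{eq:uniformbounds_H^1_L^infty} to make the constant $K$ independent of $n$.
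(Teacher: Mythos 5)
Your proposal is correct, and for the heart of the lemma it takes a genuinely different route from the paper. The second and third paragraphs of your argument (ruling out $y_n\in\partial\Omega$ via Corollary \ref{cor acf one eig} with the radius-$1$ exterior ball of $\Omega$ plus Corollary \ref{cor mean} and the uniform $H^1$ bound, and then chaining Corollary \ref{cor mean}, the inclusion $B_{R_n}(x_n)\subset B_{2R_n}(y_n)$ and Corollary \ref{cor acf one eig} at $y_n$ with $r_0=r_n$ to get $M_n^2\le C r_n^{-N}\int_{B_{r_n}(y_n)}|\nabla u_{1,n}|^2$) are exactly the paper's Case 1 and estimate \eqref{2005nic}, just organized in a different order (the paper deduces $y_n\notin\partial\Omega$ as a byproduct of the distance-ratio claim, you establish it first -- both are fine, and your ordering avoids any circularity since property (2) is only invoked after $y_n\notin\partial\Omega$ is known). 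Where you diverge is the claim $\dist(x_n,\partial\Omega)/r_n\to+\infty$: the paper stays inside its monotonicity toolkit, applying Corollary \ref{cor acf one eig} a second time at the nearest boundary point $z_n\in\partial\Omega$ (exterior ball of radius $1$), using $B_{r_n}(y_n)\subset B_{r_n+\rho_n}(z_n)$ to scale the estimate up to radius $1$ and contradict the uniform $H^1$ bound; you instead prove a uniform linear growth bound $u_{1,n}\le K\,\dist(\cdot,\partial\Omega)$ by comparison with a torsion-type barrier on the smooth domain $\Omega$, and combine it with a Caccioppoli inequality on $B_{2r_n}(y_n)$ to get $\int_{B_{r_n}(y_n)}|\nabla u_{1,n}|^2\le Cr_n^N$ directly, contradicting the already-derived lower bound. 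Both work; the trade-off is that the paper's argument uses only the exterior sphere condition of radius $1$ (no boundary regularity of $\Omega$ beyond that), while yours needs enough smoothness of $\partial\Omega$ to get a boundary gradient bound for the barrier uniformly in $n$ -- available here since $\Omega$ is smooth -- but in exchange it is more elementary (comparison principle plus Caccioppoli) and avoids the slightly delicate second application of the monotonicity corollary at a point $z_n$ that need not lie on $\partial\{u_{1,n}>0\}$. Two small points: the boundary estimate you need for the barrier is the Lipschitz (or $C^1$) bound up to $\partial\Omega$ for the solution of $-\Delta v=\mathrm{const}$ with zero data, not Hopf's lemma (which gives a lower bound on the normal derivative); and when testing $-\Delta u_{1,n}\le\lambda_{1,n}u_{1,n}$ with $u_{1,n}\phi^2$ one should note that the distributional inequality extends from nonnegative $C^\infty_c(\Omega)$ test functions to nonnegative functions in $H^1_0(\Omega)\cap L^\infty$, which is standard.
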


\begin{proof} 
We prove this result by virtue of a contradiction argument. Assume that there exists a constant $\kappa > 0$ and a subsequence (which we shall not relabel) such that
\[
	\dist (x_n,\partial \Omega) \leq \kappa r_n.
\]	
We assume that $n$ is sufficiently large in such a way that $\tilde r(N, \lambda_{1,n}) \geq 1$.

\noindent \textbf{Case 1)} $y_n \in \partial \Omega$. In this case, by joining Corollaries \ref{cor mean} and \ref{cor acf one eig}, and recalling that $\Omega$ has the exterior sphere condition with radius at least $1$, we have that
\[
\begin{split}
	C M_n &\le |\nabla u_{1,n}(x_n)|^2 \leq C \frac{1}{R_n^N} \int_{B_{R_n}(x_n)} |\nabla u_{1,n}|^2 \leq 2^NC  \frac{1}{(2R_n)^N} \int_{B_{2R_n}(y_n)} |\nabla u_{1,n}|^2\\
	&\leq C \int_{B_{1}(y_n)} |\nabla u_{1,n}|^2 \leq C \|u_{1,n}\|^2_{H^1},
\end{split}
\]
and we find a contradiction with Lemma \ref{eq:uniformbounds_H^1_L^infty}, since $M_n \to +\infty$.

\noindent \textbf{Case 2)} $y_n \not\in \partial \Omega$. In this second case we need an additional step. Let $\rho_n = \dist (y_n,\partial \Omega)$ and $z_n \in \partial \Omega$ such that $|z_n-y_n| = \rho_n$. It is plain that $\rho_n \leq (1+\kappa)r_n$. At first, by using again Corollary \ref{cor mean}, and Corollary \ref{cor acf one eig} on balls centered in $y_n$, where $\{u_{1,n}>0\}$ has an exterior sphere of radius $r_n > 2R_n$, we obtain
\begin{equation}\label{2005nic}
\begin{split}
C M_n^2  &\le |\nabla u_{1,n}(x_n)|^2 \leq C \frac{1}{R_n^N} \int_{B_{R_n}(x_n)} |\nabla u_{1,n}|^2 \\
&\leq 2^NC  \frac{1}{(2R_n)^N} \int_{B_{2R_n}(y_n)} |\nabla u_{1,n}|^2 
\leq \frac{C}{r_n^N} \int_{B_{r_n}(y_n)} |\nabla u_{1,n}|^2.
\end{split}
\end{equation}
At this point, since $B_{r_n}(y_n) \subset B_{r_n+\rho_n}(z_n)$, $\rho_n \leq (1+\kappa)r_n$, and $\{u_{1,n}>0\}$ has, at $z_n$, an exterior ball or radius $1$, Corollary \ref{cor acf one eig} again yields
\[
\begin{split}
	M_n^2 & \le  \frac{1}{r_n^N} \int_{B_{r_n}(y_n)} |\nabla u_{1,n}|^2  \leq  C\frac{(r_n+\rho_n)^N}{r_n^N} \frac{1}{(r_n+\rho_n)^N} \int_{B_{r_n + \rho_n}(z_n)} |\nabla u_{1,n}|^2  \\
&\leq C(2+\kappa)^N \frac{1}{(r_n+\rho_n)^N} \int_{B_{r_n+\rho_n}(z_n)} |\nabla u_{1,n}|^2 \leq C \int_{B_{1}(z_n)} |\nabla u_{1,n}|^2 \leq  C \|u_{1,n}\|^2_{H^1},
\end{split}
\]
and we find again a contradiction with Lemma \ref{eq:uniformbounds_H^1_L^infty}.

This completes the proof of the first part of the statement. To obtain the desired estimate, one can now proceed as in \eqref{2005nic}.
\end{proof}

To proceed further, we recall the Caffarelli-Jerison-Kenig formula, a fundamental result for free-boundary problems \cite{CJK}. Let $u,v \in H^1(\R^N)$ be two continuous and non-negative functions such that $u(x)v(x)= 0$ for any $x \in \R^N$ 
and $\|u\|_{L^2} = \|v\|_{L^2} = 1$. Assume moreover that there exists a constant $M > 0$ such that
\[
-\Delta u \leq M, \quad -\Delta v \leq M \qquad \text{in $\R^N$}
\]
in the sense of measures. Then there exists $C = C(N,M)$ such that
\[
\frac{1}{r^2} \int_{B_r(x)} \frac{|\nabla u|^2 }{|x-y|^{N-2}} \cdot \frac{1}{r^2} \int_{B_r(x)} \frac{|\nabla v|^2}{|x-y|^{N-2}} \leq C	
\]
for any $x \in \R^N$ and $r \in (0,1)$. We can directly apply the Caffarelli-Jerison-Kenig formula to our setting, since $\{\mathbf{u}_n\}$ is uniformly bounded in $L^\infty(\Omega)$ and the eigenvalues $\{\lambda_{i,n}\}$ are uniformly bounded as well, see Lemma \ref{eq:uniformbounds_H^1_L^infty}. Thus, there exists a constant $C = C(N,\bar \lambda) > 0$ such that
\begin{equation}\label{eqn bound acf}
	\frac{1}{r^N} \int_{B_r(y_n)} |\nabla u_{1,n}|^2  \cdot \frac{1}{r^N} \int_{B_r(y_n)}  |\nabla u_{j,n}|^2 \le \frac{1}{r^2} \int_{B_r(y_n)} \frac{|\nabla u_{1,n}|^2 }{|x-y_n|^{N-2}} \cdot \frac{1}{r^2} \int_{B_r(y_n)} \frac{|\nabla u_{j,n}|^2}{|x-y_n|^{N-2}} \leq C
\end{equation}
for any $0 < r < 1$ and any $j \neq 1$. 

Now we introduce the following rescaled functions
\[
	\mathbf{v}_n(x) := \frac{\mathbf{u}_n(y_n + r_n x)}{r_n M_n}, \quad x \in \Omega_n:= \frac{\Omega - y_n}{r_n},
\]
extended as $0$ to $\R^N\setminus \Omega$.

Clearly, we have $\mathbf{v}_n \in H_0^1(\Omega_n)$, $\|\nabla \mathbf{v}_n \|_{L^\infty(\R^N)} \leq 1$, and $\mathbf{v}_n(0) = 0$, for every $n$. Each set $\{v_{i,n}>0\}$ enjoys the exterior sphere condition of the same radius 1. By Lemmas \ref{lem: Rn o rn} and \ref{lem ratio}, the sets $\Omega_n$ exhaust $\R^N$ as $n \to \infty$. Moreover,
\begin{equation}\label{eqn bu scaling int}
\begin{split}
	&\int_{\R^N} v_{i,n}^2 = \frac{1}{r_n^{N+2} M_n^2} \int_{\R^N} u_{i,n}^2 =  \frac{1}{r_n^{N+2} M_n^2},\\ 
	&\int_{\R^N} |\nabla v_{i,n}|^2 = \frac{1}{r_n^{N} M_n^2} \int_{\R^N} |\nabla u_{i,n}|^2 = \frac{1}{r_n^{N} M_n^2} \lambda_{i,n},
\end{split}
\end{equation}
and $\mathbf{v}_n$ is a minimizer for the following scaled version of problem \eqref{eq:weak_characterization}:
\begin{equation}\label{min v}
\inf\left\{ J(\mathbf{v}) : \ v_i \in H^1_0(\Omega_n)\setminus\{0\}\ \forall i,\ \dist(\text{supp}\, v_i,\text{supp}\, v_j)\geq 1,\ \forall i\neq j\right\},
\end{equation}
where
\[
J(\mathbf{v}) = \sum_{i=1}^k \frac{\int_{\Omega_n} |\nabla v_{i}|^2 }{\int_{\Omega_n} v_{i}^2} = \sum_{i=1}^k \frac{\int_{\R^N} |\nabla v_{i}|^2 }{\int_{\R^N} v_{i}^2}.
\]
The asymptotic properties of $\{\mathbf{v}_n\}$ are collected in the following statement.

\begin{lemma}\label{lem: asy vn}
There exists a globally Lipschitz function $\mathbf{v}=(v_1,\ldots, v_k)$ defined in $\R^N$, with Lipschitz constant $1$, such that:
\begin{itemize}
\item[(i)] $\mathbf{v}_n \to \mathbf{v}$ in $C^{0,\alpha}_{\loc}(\R^N)$, for every $\alpha \in (0,1)$, and strongly in $H^1_{\loc}(\R^N)$;
\item[(ii)] the first component $v_1$ is not identically $0$ in $B_1$, and moreover, for any $R\geq 1$ there exists a constant $C=C(R)$ such that
\begin{equation}\label{eqn h1 estimate bu 1}
	\int_{B_R} |\nabla v_{1,n}|^2 \geq C \frac{r_n^N M_n^2}{\lambda_{1,n}} \int_{\R^N} |\nabla v_{1,n}|^2.
\end{equation}
\item[(iii)] the other components $v_j$, $j \neq1$, vanish identically in $\R^N$, and moreover, for any $R>1$ there exists $C= C(R)$ such that
\begin{equation}\label{eqn h1 estimate bu}
	\int_{B_R} |\nabla v_{j,n}|^2 + v_{j,n}^2 \leq C \frac{r_n^N}{M_n^2 \lambda_{j,n}} \int_{\R^N} |\nabla v_{j,n}|^2.
\end{equation}
\end{itemize}
\end{lemma}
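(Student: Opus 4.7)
The plan is to proceed in three stages: extract a uniformly $1$-Lipschitz compactness limit, then dispose of the ``frozen'' components $v_{j,n}$ ($j\ne 1$), and finally handle the ``leading'' component $v_{1,n}$.

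For compactness, $|\nabla v_{i,n}|=|\nabla u_{i,n}|/M_n\le 1$ by the very choice of the rescaling, so each $v_{i,n}$ is $1$-Lipschitz. Since $y_n\in\partial\Omega_{1,n}\subset\overline{\supp u_{1,n}}$, the constraint $\dist(\supp u_{1,n},\supp u_{j,n})\ge r_n$ for $j\ne 1$ forces $u_{j,n}\equiv 0$ on $B_{r_n}(y_n)$; equivalently $v_{j,n}\equiv 0$ on $B_1(0)$, and $v_{1,n}(0)=0$. So $\mathbf v_n$ is uniformly bounded on compact sets, and by Arzel\`a--Ascoli a subsequence converges in $C^{0,\alpha}_{\loc}(\R^N)$, for every $\alpha\in(0,1)$, to a $1$-Lipschitz limit $\mathbf v$; weak $L^2_{\loc}$ convergence of the gradients comes for free.

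Next I address (iii). For fixed $R>1$ and $n$ large (so $r_nR<1$), the Caffarelli--Jerison--Kenig bound \eqref{eqn bound acf} applied at radius $r_nR$ gives a product estimate. The $u_{1,n}$ factor is bounded from below by enlarging the ball and invoking Lemma~\ref{lem ratio}:
\[
\frac{1}{(r_nR)^N}\int_{B_{r_nR}(y_n)}|\nabla u_{1,n}|^2 \,\ge\, \frac{1}{R^N}\cdot\frac{1}{r_n^N}\int_{B_{r_n}(y_n)}|\nabla u_{1,n}|^2 \,\ge\, \frac{M_n^2}{CR^N}.
\]
Consequently the $u_{j,n}$ factor is at most $CR^N/M_n^2$, which after rescaling becomes $\int_{B_R}|\nabla v_{j,n}|^2\le CR^{2N}/M_n^4$. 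Since $v_{j,n}\equiv 0$ on $B_1\subset B_R$, Poincar\'e's inequality on $B_R$ promotes this to $\int_{B_R}v_{j,n}^2\le C(R)/M_n^4$. Using $\int_{\R^N}|\nabla v_{j,n}|^2=\lambda_{j,n}/(r_n^N M_n^2)$ from \eqref{eqn bu scaling int} to rewrite $M_n^{-4}$, one obtains \eqref{eqn h1 estimate bu}; in particular $v_j\equiv 0$ with strong $H^1_{\loc}$ convergence.

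For (ii), the rescaled form of Lemma~\ref{lem ratio} reads $\int_{B_1}|\nabla v_{1,n}|^2\ge 1/C$, which via \eqref{eqn bu scaling int} is exactly \eqref{eqn h1 estimate bu 1} for any $R\ge 1$. To deduce $v_1\not\equiv 0$ on $B_1$ I upgrade the weak convergence of $\nabla v_{1,n}$ to strong $L^2_{\loc}$ convergence using the PDE. Since $-\Delta v_{1,n}=r_n^2\lambda_{1,n}v_{1,n}$ on $\{v_{1,n}>0\}$ and $v_{1,n}\to v_1$ uniformly on compacts, any $\varphi\in C_c^\infty(\{v_1>0\})$ is supported in $\{v_{1,n}>0\}$ eventually, so passing to the limit shows $v_1$ is harmonic on $\{v_1>0\}$. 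For arbitrary nonnegative $\varphi\in C_c^\infty(\R^N)$, the function $v_{1,n}\varphi$ lies in $H^1_0(\{v_{1,n}>0\})$, and testing the equation yields
\[
\int|\nabla v_{1,n}|^2\varphi = r_n^2\lambda_{1,n}\int v_{1,n}^2\varphi - \int v_{1,n}\nabla v_{1,n}\cdot\nabla\varphi.
\]
Letting $n\to\infty$, the right-hand side converges (product of strong $C^0_{\loc}$ convergence against weak $L^2_{\loc}$ convergence) to $-\int v_1\nabla v_1\cdot\nabla\varphi=\int|\nabla v_1|^2\varphi$, the last equality coming from the corresponding identity for the harmonic $v_1$. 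Strong $H^1_{\loc}$ convergence of $v_{1,n}$ to $v_1$ follows, and then $\int_{B_1}|\nabla v_1|^2\ge 1/C>0$ forces $v_1\not\equiv 0$ on $B_1$. The key technical hurdle is exactly this upgrade: weak convergence plus lower semicontinuity would give the wrong inequality for concluding, and the PDE-based argument requires verifying (in standard fashion) both the admissibility of $v_{1,n}\varphi$ as a test function and the harmonicity of $v_1$ on its positivity set.
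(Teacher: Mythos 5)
Your proposal is correct, and for parts (ii) and (iii) it is essentially the paper's own argument: rescaling Lemma \ref{lem ratio} gives $\int_{B_1}|\nabla v_{1,n}|^2\ge 1/C$, and the Caffarelli--Jerison--Kenig bound \eqref{eqn bound acf} at radius $r_nR$, combined with this lower bound on the first factor and a Poincar\'e inequality for functions vanishing on $B_1$, yields $\int_{B_R}|\nabla v_{j,n}|^2\le C R^{2N}M_n^{-4}$ and then \eqref{eqn h1 estimate bu} via \eqref{eqn bu scaling int}, exactly as in the paper. The genuine difference is how you obtain the strong $H^1_{\loc}$ convergence. The paper writes $-\Delta v_{i,n}=r_n^2\lambda_{i,n}v_{i,n}+\mu_{i,n}$ with nonnegative Radon measures $\mu_{i,n}\rightharpoonup\mu_i$ and tests the equation for the difference $v_{i,n}-v_i$ with $(v_{i,n}-v_i)\varphi$, adapting \cite[Lemma 3.11]{TT}; this handles all components simultaneously and never needs to identify the limit equation. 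You instead get the components $j\neq 1$ for free from the quantitative decay in (iii), and for the first component you show that $v_1$ is harmonic on $\{v_1>0\}$ (compact subsets of $\{v_1>0\}$ lie in $\{v_{1,n}>0\}$ eventually, and $r_n^2\lambda_{1,n}\to 0$) and test the equation with $v_{1,n}\varphi$ to get convergence of the localized Dirichlet energies, which together with weak convergence upgrades to strong convergence. This is a standard alternative in the segregation literature, and the two points you flag are indeed the only ones needing care; both go through: $u_{1,n}$ is the first eigenfunction of $\{u_{1,n}>0\}$, hence belongs to $H^1_0$ of that set, so $v_{1,n}\varphi$ is admissible, and the identity $-\int v_1\nabla v_1\cdot\nabla\varphi=\int|\nabla v_1|^2\varphi$ follows by testing the harmonicity with $v_1\varphi\in H^1_0(\{v_1>0\})$ (approximate by $(v_1-\eps)^+\varphi$ and use that $\nabla v_1=0$ a.e.\ on $\{v_1=0\}$). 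Your observation that $v_{j,n}\equiv 0$ on all of $B_1$, rather than only $v_{j,n}(0)=0$ as used in the paper, is correct and slightly streamlines the identification of the limit of $v_{j,n}$. What the paper's route buys is that it avoids the admissibility and limit-equation verifications; what yours buys is a more self-contained argument that also produces the harmonicity of $v_1$ on its positivity set as a by-product.
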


\begin{proof}
The $C^{0,\alpha}_{\loc}$ convergence $\mathbf{v}_n \to \mathbf{v}$ to some $\mathbf{v} \in \mathrm{Lip}_{\loc}(\R^N)$, with $\|\nabla v\|_{L^\infty(\R^N)} \le 1$, follows directly from the uniform gradient bound, and the fact that $\mathbf{v}_n(0) = 0$, via the Ascoli-Arzel\`a theorem. To show that the convergence is also strong in $H^1_{\loc}(\R^N)$, it is not difficult to adapt the argument in \cite[Lemma 3.11]{TT}: in fact, since $-\Delta v_{i,n}\leq r_n^2 \lambda_{i,n} v_{i,n}$ and $-\Delta v_{i}\leq 0$ in $\R^N$, there exists (local) nonnegative Radon measures $\mu_{i,n},\mu_i\in \mathcal{M}_{loc}(\R^N)$ such that
\[
-\Delta v_{i,n}=r_n^2\lambda_{i,n} v_{i,n}+ \mu_{i,n},\qquad -\Delta v_i=\mu_i,
\]
and since $v_{i,n}\rightharpoonup v_i$ weakly in $H^1_{loc}(\R^N)$, then $\mu_{i,n}\rightharpoonup \mu_i$ in the sense of measures $\mathcal{M}_{loc}(\R^N)$. Then, the argument follows by testing $-\Delta (v_{i,n}-v_i)=r_n^2\lambda_{i,n} v_{i,n} + \mu_{i,n}-\mu_i$ with $(v_{i,n}-v_i)\varphi$, for $\varphi\in C^\infty_c(\R^N)$. This proves (i). 
Concerning (ii), we just need to recall that
\[
	M_n^2 \leq C \frac{1}{r_n^N} \int_{B_{r_n}(y_n)} |\nabla u_{1,n}|^2
\]
by Lemma \ref{lem ratio}. This gives, by rescaling and passing to the limit in $n$, that
\[
	 \int_{B_{1}} |\nabla v_{1,n}|^2 \geq \frac{1}{C} \quad \implies \quad \int_{B_{1}} |\nabla v_{1}|^2 \geq \frac{1}{C}
\]
and hence $v_1 \not \equiv 0$ in $B_1$. Furthermore, combining this estimate and \eqref{eqn bu scaling int}, we obtain \eqref{eqn h1 estimate bu 1}. It remains to prove the validity of point (iii). By scaling \eqref{eqn bound acf}, we have that for any $R > 1$ and $n$ sufficiently large,
\[
	\int_{B_R} |\nabla v_{j,n}|^2 \le C \int_{B_1} |\nabla v_{1,n}|^2 \int_{B_R} |\nabla v_{j,n}|^2 
	\le C \int_{B_R} |\nabla v_{1,n}|^2 \int_{B_R} |\nabla v_{j,n}|^2 \le C R^{2N} M_n^{-4}
\]
that is, 
\begin{equation}\label{eqn h1 estimate bu j}
	\int_{B_R} |\nabla v_{j,n}|^2  \le C R^{2N} M_n^{-4}\to 0.
\end{equation}
Thus, for any $R>1$, the sequence $\{v_{j,n}\}$ converges in $H^1(B_R)$ and in $C^{0,\alpha}(B_R)$ (for any $\alpha \in (0,1)$) to a constant $c$. But since $v_{j,n}^2(0)=0$, necessarily the limit $c=0$ and, since $R>1$ was arbitrarily fixed, $v_j \equiv 0$ in $\R^N$ for every $j=2,\dots,k$. Moreover, by Lemma \ref{lem poinc} we have
\[
	\int_{B_R} (|\nabla v_{j,n}|^2 + v_{j,n}^2) \leq \left(1+ C_P\right) \int_{B_R} |\nabla v_{j,n}|^2
\]
for a constant $C_P=C_P(N,R)$ that depends only on the dimension $N$ and on the fixed radius $R$. On the other hand, recalling \eqref{eqn bu scaling int} and \eqref{eqn h1 estimate bu j}, we find
\[
	\frac{\displaystyle \int_{B_R} |\nabla v_{j,n}|^2 }{ \displaystyle \int_{\R^N} |\nabla v_{j,n}|^2} \leq \frac{C R^{2N} M_n^{-4} }{\frac{1}{r_n^{N} M_n^2} \lambda_{j,n} } \implies \int_{B_R} |\nabla v_{j,n}|^2  \leq C \frac{r_n^N}{M_n^2 \lambda_{j,n}} \int_{\R^N} |\nabla v_{j,n}|^2
\]
Putting these last two inequalities together, estimate \eqref{eqn h1 estimate bu} follows.
\end{proof}
Point (iii) of the previous lemma establishes that the energy of each $v_{j,n}$, with $j\geq 2$, ``escapes to infinity'': thus, whenever we remove mass from a fixed ball and distribute it on the remainder of the domain, the $H^1$-norm should not increase in a significant way. We can be more precise. Let $\rho > 0$ be a fixed large positive number and let $\eta$ be the defined by
	\[
	\eta(x) := \begin{cases}
	1 & \text{if $|x|> 2 + \rho$}\\
	|x| - (1+\rho) & \text{if $1+\rho \leq |x| \leq 2 + \rho$}\\
	0 & \text{if $|x|< 1 + \rho$}.
	\end{cases}
	\]
We point out that $0\leq \eta \leq 1$, $|\nabla \eta|\leq 1$. Let also $\bar v_{j,n} := \eta v_{j,n}$, for $j\geq 2$. We have that $\bar v_{j,n} = v_{j,n}$ in $\R^N \setminus B_{2+\rho}$, while $\bar v_{j,n} \le v_{j,n}$ in $B_{2+\rho}$, and actually the support of $v_{j,n}$ is ``cut" by the multiplication with $\eta$. In the next lemma we estimate the energy gap between $\bar v_{j,n}$ and $v_{j,n}$.

\begin{lemma}\label{lem: en vjn}
Let $\delta_n  :=  r_n^N / M_n^2$, which tends to $0$ as $n \to \infty$. There exists $C>0$ such that
\[
\frac{\displaystyle\int_{\R^N} |\nabla\bar v_{j,n}|^2 }{ \displaystyle	\int_{\R^N} \bar v_{j,n}^2 } \leq \left(1+\frac{C}{\lambda_{j,n}}\delta_n\right) \frac{\displaystyle\int_{\R^N} |\nabla v_{j,n}|^2 }{ \displaystyle	\int_{\R^N}  v_{j,n}^2 } 
\]
for every $n$ sufficiently large.
\end{lemma}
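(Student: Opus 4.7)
The plan is to exploit point (iii) of Lemma \ref{lem: asy vn}: on any fixed ball $B_R$, both $\int_{B_R}|\nabla v_{j,n}|^2$ and $\int_{B_R} v_{j,n}^2$ are controlled by $(C\delta_n/\lambda_{j,n})\int_{\R^N}|\nabla v_{j,n}|^2$. Since the cutoff $\eta$ differs from $1$ only inside the fixed ball $B_{2+\rho}$, the passage from $v_{j,n}$ to $\bar v_{j,n}=\eta v_{j,n}$ affects only integrals supported in $B_{2+\rho}$, and these are negligible compared to the full $H^1$-norm of $v_{j,n}$. The whole proof is a direct, quantitative comparison of numerator and denominator of the Rayleigh quotients.

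For the numerator I would split
\[
\int_{\R^N}|\nabla \bar v_{j,n}|^2 = \int_{\R^N\setminus B_{2+\rho}}|\nabla v_{j,n}|^2+\int_{B_{2+\rho}}|\nabla(\eta v_{j,n})|^2,
\]
and estimate the second piece by $2\int_{B_{2+\rho}}(\eta^2|\nabla v_{j,n}|^2+v_{j,n}^2|\nabla\eta|^2)\le 2\int_{B_{2+\rho}}(|\nabla v_{j,n}|^2+v_{j,n}^2)$, using $0\le\eta\le 1$ and $|\nabla\eta|\le 1$. Applying \eqref{eqn h1 estimate bu} with $R=2+\rho$ yields
\[
\int_{\R^N}|\nabla \bar v_{j,n}|^2 \le \left(1+\frac{C\delta_n}{\lambda_{j,n}}\right)\int_{\R^N}|\nabla v_{j,n}|^2.
\]

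For the denominator, $\int_{\R^N}\bar v_{j,n}^2=\int_{\R^N}\eta^2 v_{j,n}^2\geq \int_{\R^N} v_{j,n}^2-\int_{B_{2+\rho}}v_{j,n}^2$. Using \eqref{eqn h1 estimate bu} once more together with the rescaling identity $\int_{\R^N}|\nabla v_{j,n}|^2=r_n^2\lambda_{j,n}\int_{\R^N} v_{j,n}^2$, which is a direct consequence of \eqref{eqn bu scaling int}, I obtain
\[
\int_{\R^N}\bar v_{j,n}^2\geq (1-C\delta_n r_n^2)\int_{\R^N} v_{j,n}^2.
\]

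Combining the two estimates gives
\[
\frac{\int_{\R^N}|\nabla\bar v_{j,n}|^2}{\int_{\R^N}\bar v_{j,n}^2}\le \frac{1+C\delta_n/\lambda_{j,n}}{1-C\delta_n r_n^2}\,\frac{\int_{\R^N}|\nabla v_{j,n}|^2}{\int_{\R^N} v_{j,n}^2}.
\]
Since $r_n\le\bar r$ and $\lambda_{j,n}$ is bounded between two positive constants by Lemma \ref{eq:uniformbounds_H^1_L^infty}, the inequality $\delta_n r_n^2\leq (\bar r^2 \Lambda)\,\delta_n/\lambda_{j,n}$ lets me absorb the denominator error into the numerator one, so that a first-order expansion of the fraction yields the claim with a new constant. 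There is no real obstacle here; the only point requiring a bit of care is to keep track of the explicit factor $1/\lambda_{j,n}$ in front of $\delta_n$, which is the reason for expressing $\int_{B_{2+\rho}}v_{j,n}^2$ via the gradient bound from \eqref{eqn h1 estimate bu} rather than from a direct $L^2$ estimate.
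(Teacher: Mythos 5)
Your proposal is correct and follows essentially the same route as the paper: both estimates hinge on \eqref{eqn h1 estimate bu} with $R=2+\rho$ to control the cutoff region, on the scaling identity $\int_{\R^N}|\nabla v_{j,n}|^2=\lambda_{j,n}r_n^2\int_{\R^N}v_{j,n}^2$ from \eqref{eqn bu scaling int} for the denominator, and on the uniform bounds $\lambda_1(\Omega)\le\lambda_{j,n}\le\Lambda$ to absorb the error $\bigl(1-C\delta_n r_n^2\bigr)^{-1}$ into the factor $1+\tfrac{C}{\lambda_{j,n}}\delta_n$ for $n$ large. The only cosmetic difference is that you split the gradient integral over $B_{2+\rho}$ and its complement while the paper expands $|\nabla(\eta v_{j,n})|^2$ globally; the content is identical.
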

\begin{proof}
Recalling that $\|v_{j,n}\|_{H^1(B_R)} \to 0$ for all $R> 0$, we find that 
\[
	\int_{\R^N} v_{j,n}^2 = \int_{\R^N} \eta^2 v_{j,n}^2 + \int_{\R^N} \left(1- \eta^2\right) v_{j,n}^2 \leq \int_{\R^N} \bar v_{j,n}^2 + \int_{B_{2+\rho}} v_{j,n}^2 = \int_{\R^N} \bar v_{j,n}^2 +\frac{C}{\lambda_{j,n}} \delta_{n} \int_{\R^N} |\nabla v_{j,n}|^2,
\]
where in the last step we used estimate \eqref{eqn h1 estimate bu} (notice that $C$ depends on $\rho$, which is fixed).  Similarly, we find
\[
\begin{split}
	\int_{\R^N} |\nabla\bar v_{j,n}|^2 & = \int_{\R^N} \left( |\nabla \eta|^2 v_{j,n}^2 + 2\eta  v_{j,n} \nabla \eta \cdot \nabla v_{j,n} + \eta^2 | \nabla v_{j,n}|^2\right)\\
	& =  \int_{\R^N} | \nabla v_{j,n}|^2  + \int_{B_{2+\rho}} \left[(\eta^2 -1) | \nabla v_{j,n}|^2 + |\nabla \eta|^2 v_{j,n}^2 + 2\eta  v_{j,n} \nabla \eta \cdot \nabla v_{j,n} \right]\\
	& \leq  \int_{\R^N} | \nabla v_{j,n}|^2  + \int_{B_{2+\rho}}  \left[ (\eta^2 -1) | \nabla v_{j,n}|^2 +2|\nabla \eta|^2 v_{j,n}^2 +    \eta^2 |\nabla v_{j,n} |^2\right]
\\	& \leq  \int_{\R^N} | \nabla v_{j,n}|^2  + 2 \int_{B_{2+\rho}} \left(  v_{j,n}^2 + | \nabla v_{j,n}|^2 \right)\leq \left(1+\frac{C}{\lambda_{j,n}}\delta_n\right)  \int_{\R^N} | \nabla v_{j,n}|^2.
\end{split}
\]
As a result, combining this with \eqref{eqn bu scaling int} and recalling from Lemma \ref{eq:uniformbounds_H^1_L^infty} that the eigenvalues $\lambda_{j,n}$ are bounded from above and away from $0$, we obtain that
\[
	\frac{\displaystyle\int_{\R^N} |\nabla\bar v_{j,n}|^2 }{ \displaystyle	\int_{\R^N} \bar v_{j,n}^2 } \leq \frac{\displaystyle \left(1+\frac{C}{\lambda_{j,n}}\delta_n\right)  \int_{\R^N} |\nabla v_{j,n}|^2 }{ \displaystyle	\int_{\R^N} v_{j,n}^2 - \frac{C}{\lambda_{j,n}}\delta_{n} \int_{\R^N} |\nabla v_{j,n}|^2 } = \frac{1+\frac{C}{\lambda_{j,n}}\delta_n}{1 - C \delta_n r_n^2 }  \frac{\displaystyle\int_{\R^N} |\nabla v_{j,n}|^2 }{ \displaystyle	\int_{\R^N} v_{j,n}^2 } \leq \left(1+\frac{C}{\lambda_{j,n}}\delta_n\right) \frac{\displaystyle\int_{\R^N} |\nabla v_{j,n}|^2 }{ \displaystyle	\int_{\R^N}  v_{j,n}^2 } 
\]
for $n$ sufficiently large, which is the desired result.
\end{proof}

Now the idea is to construct a competitor for $\mathbf{v}_n$ with lower energy $J$. This will be in contradiction with the fact that $\mathbf{v}_n$ is a minimizer for \eqref{min v}, and will complete the proof. The $j$-th component of the competitor will be $\bar v_{j,n}$, for $j \ge 2$. We need to conveniently define the first component $\bar v_{1,n}$, and the idea is to enlarge the support of $v_{1,n}$ (taking advantage of the fact that the support of $v_{j,n}$ was previously cut, for $j \ge 2$), in order to substantially lower the Rayleigh quotient of $v_{1,n}$. We present the details in what follows.

We have already established that, in any ball $B_R$ with $R>1$, the function $v_1$ is not identically 0. Moreover, $0 \in \partial \{v_{1,n}>0\}$ for every $n$, and $\{v_{1,n}>0\}$ satisfies the exterior sphere condition of radius $1$ at $0$, and, in the exterior sphere, we have $v_{1,n} \equiv 0$. Up to a rotation, it is not restrictive to suppose that $B_1(e_1)$ is such exterior sphere. We consider a new sequence of functions $\bar v_{1,n} \in H^1_0(\Omega_n)$ defined piece-wise as follows:
\begin{itemize}
	\item for $|x| \geq \rho$, we let $\bar v_{1,n}(x) =  v_{1,n}(x)$;
	\item for $|x| < \rho$, we let $\bar v_{1,n}$ be such that
	\begin{equation}\label{var car bar v}
		\bar v_{1,n} = \arg\min \left\{ \int_{B_{\rho}} |\nabla v|^2 : \ v -  v_{1,n} \in H_0^1(B_\rho), \ \int_{B_\rho} v^2  = \int_{B_\rho} v_{1,n}^2 \right\}
		\end{equation}
		\end{itemize}		
Since $v_{1,n} \equiv 0$ in $B_\rho \cap B_1(e_1)$, the support of $\bar v_{1,n}$ is strictly larger than the one of $v_{1,n}$, and it is at distance at least 1 from the support of $\bar v_{j,n}$, for any $j \geq 2$ (by definition of $\eta$). Moreover, we have that
\[
	\int_{\R^N} \bar v_{1,n}^2 = \int_{\R^N} v_{1,n}^2 \qquad \text{while} \qquad \int_{\R^N} |\nabla \bar v_{1,n}|^2 < \int_{\R^N} |\nabla v_{1,n}|^2.
\]
Concerning the last inequality, we have to be more precise.

\begin{lemma}\label{lem: en v_1}
There exists $\eps \in (0,1)$ such that
\begin{equation}\label{scarto energia}
	\int_{B_\rho}|\nabla \bar v_{1,n}|^2 \leq (1-\eps) \int_{B_\rho}|\nabla v_{1,n}|^2 \qquad \forall n.
\end{equation}
\end{lemma}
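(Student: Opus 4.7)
The plan is to argue by contradiction. Suppose no uniform $\eps$ exists; then, along a subsequence (still indexed by $n$), one has $\int_{B_\rho}|\nabla \bar v_{1,n}|^2/\int_{B_\rho}|\nabla v_{1,n}|^2\to 1$. Since Lemma \ref{lem: asy vn}(i) gives $v_{1,n}\to v_1$ strongly in $H^1(B_\rho)$, and since $\bar v_{1,n}$ is a minimizer with energy bounded above by $\int_{B_\rho}|\nabla v_{1,n}|^2$, this assumption would force
\[
\int_{B_\rho}|\nabla \bar v_{1,n}|^2\longrightarrow \int_{B_\rho}|\nabla v_1|^2.
\]
I would then reach a contradiction by exhibiting two complementary facts about the limiting variational problem
\[
I:=\inf\left\{\int_{B_\rho}|\nabla w|^2 : w-v_1\in H^1_0(B_\rho),\ \int_{B_\rho} w^2=\int_{B_\rho} v_1^2\right\},
\]
namely the approximation bound $\limsup_n \int_{B_\rho}|\nabla \bar v_{1,n}|^2\leq I$ and the strict gap $I<\int_{B_\rho}|\nabla v_1|^2$.

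For the approximation bound, I would fix a nonnegative minimizer $\bar v_1$ of $I$ and pick a bump $\psi\in C^\infty_c(B_\rho)$, $\psi\geq 0$, supported where $\bar v_1>0$, so that $\int_{B_\rho}\bar v_1\psi>0$. Setting $\tilde\phi_n:=\bar v_1+(v_{1,n}-v_1)+\alpha_n\psi$, I would pick $\alpha_n$ by solving the scalar quadratic $\int_{B_\rho}\tilde\phi_n^2=\int_{B_\rho} v_{1,n}^2$. A short check shows that the defect $\int_{B_\rho} v_{1,n}^2-\int_{B_\rho}\phi_n^2$ tends to $0$, while the linear coefficient of the quadratic tends to $2\int_{B_\rho}\bar v_1\psi\neq 0$, forcing the (small) root $\alpha_n\to 0$. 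Then $\tilde\phi_n\to\bar v_1$ in $H^1(B_\rho)$; moreover $\tilde\phi_n-v_{1,n}=(\bar v_1-v_1)+\alpha_n\psi\in H^1_0(B_\rho)$, since $\bar v_1=v_1$ on $\partial B_\rho$ and $\psi$ has compact support in $B_\rho$. Thus $\tilde\phi_n$ is admissible in \eqref{var car bar v}, and minimality of $\bar v_{1,n}$ yields $\limsup_n\int_{B_\rho}|\nabla\bar v_{1,n}|^2\leq \lim_n\int_{B_\rho}|\nabla\tilde\phi_n|^2=I$.

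The strict gap $I<\int_{B_\rho}|\nabla v_1|^2$ is the key point, and will be proved by unique continuation. If equality held, $v_1$ would itself minimize the limit problem and would thus satisfy the Euler--Lagrange equation $-\Delta v_1=\mu v_1$ in $B_\rho$ for some Lagrange multiplier $\mu\in\R$. By elliptic regularity for constant-coefficient operators, $v_1$ would be real-analytic in the connected set $B_\rho$. On the other hand, $v_{1,n}\equiv 0$ on the exterior sphere $B_1(e_1)$, so the locally uniform limit $v_1$ vanishes identically on the nonempty open set $B_\rho\cap B_1(e_1)$; analyticity would then force $v_1\equiv 0$ throughout $B_\rho$, in direct contradiction with Lemma \ref{lem: asy vn}(ii). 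Combining the two estimates gives $\int_{B_\rho}|\nabla v_1|^2\leq I<\int_{B_\rho}|\nabla v_1|^2$, the desired contradiction. The main obstacle is the competitor construction: the perturbation must simultaneously preserve the Dirichlet trace $v_{1,n}$ on $\partial B_\rho$ and the exact $L^2$ normalization $\int_{B_\rho} v_{1,n}^2$ while remaining close to $\bar v_1$ in $H^1$; the interior correction $\alpha_n\psi$ is precisely the minimal degree of freedom that adjusts the $L^2$ mass without disturbing the boundary trace.
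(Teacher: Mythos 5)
Your proof is correct, and it follows the same overall skeleton as the paper (contradiction along a subsequence with energy ratio tending to $1$, strong convergence $v_{1,n}\to v_1$ from Lemma \ref{lem: asy vn}, the limiting constrained problem on $B_\rho$, and the same ``base $+$ $(v_{1,n}-v_1)$ $+$ small bump'' perturbation solved via a scalar quadratic to restore the $L^2$ constraint), but it is organized in a genuinely leaner way. The paper passes to the weak limit $\bar v$ of $\bar v_{1,n}$ and must therefore prove that the multipliers $\bar\lambda_n$ in $-\Delta\bar v_{1,n}=\bar\lambda_n\bar v_{1,n}$ are bounded and that $\bar v_{1,n}\to\bar v$ strongly in $H^1(B_\rho)$ (its Steps 1--2), before showing $\bar v$ minimizes the limit problem; you instead use the perturbation argument in a $\limsup$ (upper bound) fashion against a fixed minimizer of $I$, which exploits only the minimality of $\bar v_{1,n}$ in \eqref{var car bar v} and bypasses Steps 1--2 entirely. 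The final rigidity step also differs: the paper rules out $v_1$ being a minimizer via the strong maximum principle (a nonnegative minimizer is strictly positive in $B_\rho$), while you use real-analyticity of solutions of $-\Delta v_1=\mu v_1$ and unique continuation from the open set $B_\rho\cap B_1(e_1)$ where $v_1$ vanishes; both are valid, the paper's version avoiding any analyticity input. Two points you take for granted are standard but worth a line if written up: existence of a (nonnegative) minimizer of $I$ by the direct method (the constraint passes to weak $H^1$ limits by Rellich, and one may replace $w$ by $|w|$ since $v_1\ge 0$ on $\partial B_\rho$), and the existence of the Lagrange multiplier $\mu$, which is legitimate because $\int_{B_\rho}v_1^2>0$ makes the constraint nondegenerate. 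What each approach buys: the paper's longer route yields, as a by-product, strong $H^1$ convergence of $\bar v_{1,n}$ to a minimizer of the limit problem, information that is not needed for \eqref{scarto energia}; your route is shorter and isolates exactly what the lemma requires.
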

\begin{proof}
The proof is quite long and, for the reader's convenience, we divide it into some intermediate steps. Assume by contradiction that, up to striking out a subsequence,
\begin{equation}\label{0405nic}
	 (1-\eps_n) \int_{B_\rho}|\nabla v_{1,n}|^2 \leq \int_{B_\rho}|\nabla \bar v_{1,n}|^2 \leq \int_{B_\rho}|\nabla v_{1,n}|^2,
\end{equation}
with $\eps_n \to 0^+$. Since $\{\bar v_{1,n}\}$ is bounded in $H^1(B_\rho)$, $v_{1,n} = \bar v_{1,n}$ on $\pa B_\rho$, $\|v_{1,n}\|_{L^2(B_\rho)} = \|\bar v_{1,n}\|_{L^2(B_\rho)}$, and $v_{1,n} \to v_1 \not \equiv 0$ strongly in $H^1(B_\rho)$ (see Lemma \ref{lem: asy vn}), we have that up to a subsequence $\bar v_{1,n} \rightharpoonup \bar v$ weakly in $H^1(B_\rho)$, strongly in $L^2(B_\rho)$ and in $L^2(\partial B_\rho)$ (by compactness of the trace operator $H^1(B_\rho) \to L^2(\pa B_\rho)$), with  $v_1 = \bar v$ on $\pa B_\rho$, and $\int_{B_\rho} \bar v_1^2 =: c>0$. Moreover, by minimality and the strong maximum principle
\[
-\Delta \bar v_{1,n}= \bar \lambda_{n} \bar v_{1,n}, \quad \bar v_{1,n}>0 \qquad \text{in $B_\rho$},
\]
for some $\bar \lambda_n \in \R$.\\
\emph{Step 1)} The sequence $\{\bar \lambda_n\}$ is bounded. To prove this claim, we first show that there exists $r \in (0,\rho)$ and a subsequence $n_k \to +\infty$ such that, 
\begin{equation}\label{stima dal basso su palletta}
\int_{B_{r}} \bar v_{1,n_k}^2 \ge \frac{c}{2} \qquad  \text{for every $k$}.
\end{equation} 
Indeed, if by contradiction this were not true, we would have that 
\[
\int_{B_r} \bar v_{1,n}^2 < \frac{c}{2} \quad \text{for every $r \in (0,\rho)$, for every $n$ large.}
\]
But, in this case, if $r_m \to \rho^-$, with a diagonal selection we could find an increasing sequence $n_m \to \infty$ such that
\[
\int_{B_{r_m}} \bar v_{1,n_m}^2 <  \frac{c}{2} \quad \text{for every $m$;}
\]
then, by strong convergence,
\[
\frac{c}{2} > \int_{B_{r_m}} \bar v_{1,n_m}^2 = \int_{B_\rho} \bar v_{1,n_m}^2 \chi_{B_{r_m}} \to \int_{B_\rho} \bar v_{1}^2  = c >0
\]
a contradiction. Now, denoting for the sake of simplicity by $\{\bar v_{1,n}\}$ the sequence in \eqref{stima dal basso su palletta}, let us take a non-negative $\varphi \in C^\infty_c(B_\rho)$, with $\varphi \equiv 1$ on $B_r$, and let us test the equation of $\bar v_{1,n}$ with $\bar v_{1,n} \varphi^2$: we obtain
\[
\begin{split}
 |\bar \lambda_n| \int_{B_\rho} \bar v_{1,n}^2 \varphi^2 = \left|\int_{B_\rho} |\nabla \bar v_{1,n}|^2 \varphi^2 + 2 \bar v_{1,n} \varphi \nabla \bar v_{1,n} \cdot \nabla \varphi\right| \le C \|\bar v_{1,n}\|_{H^1(B_\rho)}^2. 
\end{split}
\]
Since the coefficient of $ |\bar \lambda_n|$ is bounded from below, by \eqref{stima dal basso su palletta}, and $\{\bar v_{1,n}\}$ is bounded in $H^1(B_\rho)$, this implies that $\{\lambda_n\}$ is bounded.\\
\emph{Step 2)} $\bar v_{1,n} \to \bar v$ strongly in $H^1(B_\rho)$. Let $\bar v_{1,n} = w_n + v_{1,n}$, by linearity we have that the sequence $\{w_n\} \subset H^1_0(B_\rho)$ converges weakly in $H^1_0(B_\rho)$ and strongly in $L^2(B_\rho)$ to $\bar v - v_1$. Moreover, for any $n \in \N$, $w_n$ solves
\[
	\int_{B_\rho} \nabla w_n \cdot \nabla \varphi - \bar \lambda_{n} w_n \varphi + \int_{B_\rho} \nabla v_{1,n} \cdot \nabla \varphi - \bar \lambda_{n} v_{1,n} \varphi = 0 \qquad \forall \varphi \in H^1_0(\Omega).
\]
That is, for any $n, m \in \N$, we have
\[
	\int_{B_\rho} \nabla (w_n -w_m) \cdot \nabla \varphi + \int_{B_\rho} \nabla (v_{1,n}-v_{1,m}) \cdot \nabla \varphi - \left(\bar \lambda_{n} v_{1,n} - \lambda_{m} v_{1,m}\right) \varphi +  \left(\bar \lambda_{n} w_n - \bar \lambda_{m} w_m\right) \varphi = 0,
\]
for every $\varphi \in H^1_0(\Omega)$. Taking $\varphi = w_n-w_m \in H^1_0(B_\rho)$ yields
\begin{multline*}
\int_{B_\rho} |\nabla (w_n -w_m)|^2 = -\int_{B_\rho} \nabla (v_{1,n}-v_{1,m}) \cdot \nabla (w_n -w_m) \\
-\int_{B_\rho} \left(\bar \lambda_{n} w_n - \bar \lambda_{m} w_m -\bar \lambda_{n} v_{1,n} + \lambda_{m} v_{1,m}\right) (w_n -w_m).
\end{multline*}
Now we recall that $v_{1,n} \to v$ in $H^1(B_\rho)$, $\bar v_{1,n} \rightharpoonup \bar v$  in $H^1(B_\rho)$, and that the sequence $\{\bar \lambda_n\}$ is bounded, as proved in Step 1. Thus, we deduce that the right hand side of the previous equation converges to 0 as $m,n \to +\infty$. That is, $\{w_n\}$ is a Cauchy sequence in $H^1(B_\rho)$, and we conclude by linearity that $\bar v_{1,n} \to \bar v$ strongly in $H^1(B_\rho)$, as claimed.\\
\emph{Step 3)} We are ready to prove that \eqref{0405nic} gives a contradiction, which entails the validity of estimate \eqref{scarto energia}. Recall the variational characterization of $\bar v_{1,n}$, given in \eqref{var car bar v}. Collecting what we proved so far, we have that $\bar v_{1,n} \to \bar v$ strongly in $H^1(B_\rho)$, where $\bar v$ satisfies, for some $\bar \lambda \in \R$,
\[
\begin{cases} -\Delta \bar v = \bar \lambda \bar v, \quad \bar v >0 & \text{in $B_\rho$}\\ \bar v= v_1 & \text{on $\pa B_\rho$;} \end{cases}
\]
moreover $\|\bar v\|_{L^2(B_\rho)} = \|v_1\|_{L^2(B_\rho)}$. We claim that $\bar v$ minimizes
\[
\inf\left\{ \int_{B_\rho}  |\nabla w|^2 : \ w- v_1 \in H_0^1(B_\rho), \  \int_{B_\rho} w^2  = \int_{B_\rho} v_{1}^2 \right\}.
\]
The desired contradiction follows easily from this claim: by \eqref{0405nic} and the strong convergence $v_{1,n} \to v_1$, we would have that also $v_1$ is a nonnegative minimizer for the same problem. But any nonnegative minimizer solves
\[
 -\Delta v = \lambda v, \quad  v >0 \qquad \text{in $B_\rho$}
\]
for some $\lambda>0$, which is in contradiction with the fact that $v_1 \equiv 0$ in $B_\rho \cap B_1(e_1)$. To prove that $\bar v$ is a minimizer, we argue again by contradiction, and suppose that there exists $w \in H^1(B_\rho)$ such that 
\[
w-v_1 \in H_0^1(B_\rho), \quad \int_{B_\rho} w^2 = \int_{B_\rho} v_1^2, \quad  
\int_{B_\rho} |\nabla w|^2 < \int_{B_\rho} |\nabla \bar v|^2. 
\]
In this case, take 
\[
z_n = w + (\bar v_{1,n}-\bar v) + t_n \varphi \quad \text{with }\varphi \in C^\infty_c(B_\rho): \ \int_{B_\rho} w \varphi >0.
\]
and $t_n \to 0$ to be chosen later. It is plain that $z_n-\bar v_{1,n} \in H_0^1(B_\rho)$, with $z_n \to w$ strongly in $H^1(B_\rho)$. Thus, by strong convergence,
\[
\int_{B_\rho} |\nabla w|^2 < \int_{B_\rho} |\nabla \bar v|^2 \quad \implies \quad \int_{B_\rho} |\nabla z_n|^2 < \int_{B_\rho} |\nabla \bar v_{1,n}|^2
\]
for every $n$ large enough. Now we show that we can choose $t_n$ in such a way that $\int_{B_\rho} z_n^2 = \int_{B_\rho} \bar v_{1,n}^2$. In fact, to impose such a condition $\int_{B_\rho} z_n^2 = \int_{B_\rho} \bar v_{1,n}^2$ amounts to require that
\[
 \int_{B_\rho} \bar v^2 +\int_{B_\rho} (\bar v_{1,n}- \bar v)^2 + t_n^2 \int_{B_\rho} \varphi^2 + 2 t_n\left( \int_{B_{\rho}} w \varphi + (\bar v_{1,n}- \bar v) \varphi\right)  + 2 \int_{B_\rho} (\bar v_{1,n}- \bar v) w = \int_{B_\rho} \bar v_{1,n}^2.
\]
This is an equation of type
\[
t_n^2 + a_n t_n + b_n = 0 \quad \text{with $a_n \to a>0$ and $b_n \to 0$}, 
\]
where we used the fact that $\int_{B_\rho} w \varphi>0$ by assumption, and the convergence of $\bar v_{1,n}$ to $\bar v$. Such an equation clearly admits a solution $t_n \to 0$. To sum up, we showed that $z_n$ is an admissible competitor for $\bar v_{1,n}$ with a lower energy, in contradiction with the minimality of $\bar v_{1,n}$. The contradiction shows that $w$ as above cannot exist, that is, $\bar v_1$ is a minimizer. As observed, this completes the proof of the lemma.
\end{proof}

\begin{proof}[Conclusion of the proof of Theorem \ref{thm:unif Lip}]
As consequence of the Lemma \ref{lem: en v_1}, we can give a quantitative estimate for the energy gap between $v_{1,n}$ and $\bar v_{1,n}$. Indeed, exploiting also \eqref{eqn h1 estimate bu 1}, we have that
\[
	\begin{split}
	\int_{\R^N}|\nabla \bar v_{1,n}|^2 &= \int_{B_\rho}|\nabla \bar v_{1,n}|^2 + \int_{\R^N \setminus B_\rho}|\nabla \bar v_{1,n}|^2 \leq (1-\eps)\int_{B_\rho}|\nabla v_{1,n}|^2 + \int_{\R^N \setminus B_\rho}|\nabla v_{1,n}|^2 \\
	&\leq \int_{\R^N}|\nabla v_{1,n}|^2 -\eps \int_{B_\rho}|\nabla v_{1,n}|^2 \\
	&\leq \int_{\R^N}|\nabla v_{1,n}|^2 -\eps C \frac{r_n^N M_n^2}{\lambda_{1,n}} \int_{\R^N} |\nabla v_{1,n}|^2 = \left(1 - \eps C \frac{ M_n^4}{\lambda_{1,n}} \delta_n \right) \int_{\R^N} |\nabla v_{1,n}|^2
	\end{split}
\]
where $\delta_n = r_n^N / M_n^2$, as in Lemma \ref{lem: en vjn}, and $C>0$ is a positive constant independent of $n$. Combining this estimate with Lemma \ref{lem: en vjn}, we can finally prove that the competitor $\bar{\mathbf{v}}_n=(\bar v_{1,n}, \bar v_{2,n},\dots, \bar v_{k,n})$ has lower energy than $\mathbf{v}_n$: indeed
\[
	\begin{split}
		J(\bar{\mathbf{v}}_n) &= \sum_{i=1}^k \frac{\displaystyle\int_{\R^N} |\nabla\bar v_{i,n}|^2 }{ \displaystyle	\int_{\R^N} \bar v_{i,n}^2 } = 	\frac{\displaystyle\int_{\R^N} |\nabla\bar v_{1,n}|^2 }{ \displaystyle	\int_{\R^N} \bar v_{1,n}^2 } + \sum_{j=2}^k 	\frac{\displaystyle\int_{\R^N} |\nabla\bar v_{j,n}|^2 }{ \displaystyle	\int_{\R^N} \bar v_{j,n}^2 }\\
		&\leq \left(1 - \eps C \frac{ M_n^4}{\lambda_{1,n}} \delta_n \right) \frac{\displaystyle\int_{\R^N} |\nabla v_{1,n}|^2 }{ \displaystyle	\int_{\R^N} v_{1,n}^2 } + \sum_{j=2}^k \left(1+\frac{C}{\lambda_{j,n}}\delta_n\right) \frac{\displaystyle\int_{\R^N} |\nabla v_{j,n}|^2 }{ \displaystyle	\int_{\R^N} v_{j,n}^2 }\\
		&\leq J(\mathbf{v}_n)	 + \left( - \eps C \frac{ M_n^4}{\lambda_{1,n}} \delta_n r^2_n\lambda_{1,n} +  \sum_{j=2}^N \frac{C}{\lambda_{j,n}}\delta_n r^2_n \lambda_{j,n} \right)\\
		&\leq J(\mathbf{v}_n) + \delta_n r^2_n \left( C (k-1)  - \eps C  M_n^4  \right) < 	J(\mathbf{v}_n)	\end{split}
\]
for every $n$ large, where in the last step we have exploited the fact that $M_n \to +\infty$, as by assumption. This is a contradiction with the minimality of $\mathbf{v}_n$, and completes the proof of the Lipschitz bound in Theorem \ref{thm:unif Lip}.
\end{proof}

We now pass to the proof of Corollary \ref{coro:unif Lip}. We start with an estimate which implies the convergence of $c_r$ to $c_0$ as $r\to 0$.

\begin{lemma}\label{eq:estimate}
	There exists a constant $C > 0$ such that
	\[
		c_0 \leq c_r \leq c_0 + C r
	\]
for any $r>0$.
\end{lemma}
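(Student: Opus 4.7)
The plan is to separate the two inequalities. The lower bound $c_0 \leq c_r$ is immediate: since $\mathcal{P}_r(\Omega) \subset \mathcal{P}_0(\Omega)$ for every $r > 0$ (an $r$-separated partition is \emph{a fortiori} disjoint), $c_r$ is an infimum over a smaller class.

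For the upper bound, I would build an $r$-admissible competitor out of an optimal partition $(\Omega_{1,0},\ldots,\Omega_{k,0})$ for $c_0$ with $L^2$-normalized eigenfunctions $u_{i,0}$ supplied by Theorem A. The natural construction is to pass to the inward offset
\[
\Omega_{i,r}^* := \{x \in \Omega_{i,0} : B_{r/2}(x) \subset \Omega_{i,0}\}.
\]
If $x \in \Omega_{i,r}^*$ and $y \in \Omega_{j,r}^*$ with $i \ne j$, then the balls $B_{r/2}(x) \subset \Omega_{i,0}$ and $B_{r/2}(y) \subset \Omega_{j,0}$ are disjoint, so $|x-y| \ge r$ and $(\Omega_{1,r}^*,\ldots,\Omega_{k,r}^*) \in \mathcal{P}_r(\Omega)$ for $r$ small enough. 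Consequently $c_r \le \sum_i \lambda_1(\Omega_{i,r}^*)$, and the problem reduces to the continuity-type bound $\lambda_1(\Omega_{i,r}^*) \le \lambda_1(\Omega_{i,0}) + Cr$.

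For this I would test the Rayleigh quotient on $\Omega_{i,r}^*$ against $v_{i,r} := u_{i,0}\,\eta_r$, where $\eta_r$ is a standard distance-based cutoff equal to $0$ where $\dist(\cdot,\Omega\setminus \Omega_{i,0}) \le r/2$, equal to $1$ where $\dist(\cdot,\Omega\setminus \Omega_{i,0}) \ge r$, and satisfying $|\nabla \eta_r| \le 2/r$. Expanding $|\nabla v_{i,r}|^2$ and using that $u_{i,0}$ is globally Lipschitz with $u_{i,0} = 0$ on $\partial \Omega_{i,0}$ (so $u_{i,0} \le C r$ on the strip $S_{i,r} := \{r/2 \le \dist(\cdot,\partial \Omega_{i,0}) \le r\}$), every defect integrand appearing in the expansion is uniformly bounded on $S_{i,r}$. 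Combined with a Minkowski-type estimate $|S_{i,r}| \le C r$, this yields $\int |\nabla v_{i,r}|^2 \le \lambda_1(\Omega_{i,0}) + C r$ and, by the same reasoning, $\int v_{i,r}^2 \ge 1 - C r$. Therefore $\lambda_1(\Omega_{i,r}^*) \le (\lambda_1(\Omega_{i,0}) + Cr)/(1-Cr) \le \lambda_1(\Omega_{i,0}) + C'r$, and summing over $i$ gives $c_r \le c_0 + Cr$.

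The step that requires the most care is the tubular-neighborhood estimate $|\{x\in\Omega : \dist(x,\partial\Omega_{i,0}) \le r\}| \le C r$. This is where the fine regularity from Theorem A is used essentially: $\partial \Omega_{i,0}$ is piecewise $C^{1,\alpha}$ away from a singular set of dimension $\le N-2$, so it has finite $(N-1)$-dimensional Hausdorff measure in the bounded domain $\Omega$; a standard covering argument (the singular set contributing only $O(r^2)$) then produces the $O(r)$ bound, which is linear in $r$ and matches the claimed order of the remainder.
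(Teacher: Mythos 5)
Your overall strategy is the same as the paper's: take an optimal configuration for $c_0$, multiply the Lipschitz eigenfunctions by a distance cutoff that vanishes in an $r/2$-neighborhood of the free boundary, control the extra terms using $u_{i,0}\le K\dist(\cdot,\{u_{i,0}=0\})$ together with $|\nabla\eta_r|\le 2/r$, and absorb everything into an $O(r)$ tubular-neighborhood (Minkowski content) bound. The paper does this with a single cutoff built from the common nodal set $\mathcal{N}=\{\mathbf{u}=0\}\cap\Omega$ and plugs $\mathbf{u}\eta$ directly into the Rayleigh quotients of the weak formulation \eqref{eq:weak_characterization}; your reformulation through shrunken domains and $\lambda_1(\Omega^*_{i,r})$ is only a cosmetic variant of this.

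There is, however, one step that fails as written: the test function $v_{i,r}=u_{i,0}\eta_r$ is not supported in your set $\Omega^*_{i,r}=\{x:\ B_{r/2}(x)\subset\Omega_{i,0}\}$, so it is not admissible for $\lambda_1(\Omega^*_{i,r})$. Indeed, a point $x\in\Omega_{i,0}$ with $\dist(x,\partial\Omega)<r/2$ but far from $\mathcal{N}$ and from the other components has $\dist(x,\Omega\setminus\Omega_{i,0})>r/2$, hence $\eta_r(x)=1$ and $v_{i,r}(x)>0$, while $B_{r/2}(x)$ exits $\Omega$ and so $x\notin\Omega^*_{i,r}$; thus $v_{i,r}\notin H^1_0(\Omega^*_{i,r})$. (Your ball-containment definition is exactly what makes the mutual distance $\ge r$ clean, but it shrinks the sets away from $\partial\Omega$ as well, which your cutoff does not see.) The repair is easy and keeps the order $O(r)$: either let $\eta_r$ also vanish in the $r/2$-neighborhood of $\partial\Omega$ (harmless, since $u_{i,0}\in H^1_0(\Omega)$ is Lipschitz and vanishes there, and $|\{\dist(\cdot,\partial\Omega)<r\}|=O(r)$ by smoothness of $\partial\Omega$), or take as competitor partition the positivity sets of the truncated functions and argue on the Rayleigh quotients, which is precisely the paper's route with the nodal-set cutoff. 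One further caution on the "$|\mathcal{N}_r|\le Cr$" step: Hausdorff dimension $\le N-2$ of the singular set does not by itself give an $O(r^2)$ (or even $o(r)$) volume for its tubular neighborhood, since upper Minkowski dimension can exceed Hausdorff dimension; the paper instead invokes rectifiability of $\mathcal{N}$ with finite $\mathcal{H}_{N-1}$ measure and Federer's theorem identifying Minkowski content with $\mathcal{H}_{N-1}$, and you should quote the same result rather than rely on the covering sketch.
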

\begin{proof} The estimate $c_0\leq c_r$ is straightforward, so we prove the second one. We will use the solution of the problem $c_0$ (that is with distance constraint $r=0$) to construct a competitor for $c_r$ with $r > 0$.

Let $\mathbf{u}\in H^1_0(\Omega)$ be any minimizer of the problem $c_0$, and recall from Theorem A that $\mathbf{u} \in \mathrm{Lip}(\overline{\Omega})$. We denote $K = \max_{i= 1, \dots, N} \|\nabla u_i\|_{L^\infty(\Omega)}$. For any $i = 1, \dots, N$ we let $\Omega_i = \{u_i > 0\}$ so that $\Omega_i \cap \Omega_j = \emptyset$, and we have
\[
    \int_{\Omega} u_i^2 = 1, \quad \int_{\Omega} |\nabla u_i|^2 = \lambda_1(\Omega_i).
\]
We also recall from Theorem A that the free-boundary $\mathcal{N} := \{x \in \Omega : \mathbf{u}(x) = 0\} = \Omega \setminus (\cup_i \Omega_i)$ is an $(N-1)$-rectifiable set of finite $(N-1)$-Hausdorff measure. In particular, by the rectifiability of $\mathcal{N}$, we have that the Minkowski content of $\mathcal{N}$ coincides with its $N-1$-Hausdorff measure (\cite[Thm 3.2.39]{Federer}). More explicitly, if for a given $r>0$ we denote the $r$-tubular neighborhood of $\mathcal{N}$ as
\[
	\mathcal{N}_r = \{x \in \R^N : \dist(x, \mathcal{N}) < r\}
\]
then we have
\[
	\lim_{r \to 0^+} \frac{|\mathcal{N}_r|}{2r} = \mathcal{H}_{N-1}(\mathcal{N})
\]
where $|\cdot|$ is the Lebesgue measure in $\R^n$. 
 
Fix now a constant $C \geq 2$ such that for any $r>0$ sufficiently small we have
\[
	|\mathcal{N}_r| \leq C r \mathcal{H}_{N-1}(\mathcal{N}).
\]
We define a cutoff function $\eta \in Lip(\Omega)$ as follows
\[
	\eta(x) = \begin{cases}
		0 &\text{if $x\in \mathcal{N}_{r/2}$},\\
		\frac{\dist(x, \mathcal{N}) - r/2}{r/2} &\text{if $\dist(x, \mathcal{N}) \in [r/2,r]$},\\
		1 &\text{if $\Omega\setminus \mathcal{N}_r$}.
	\end{cases}
\]
Observe that $0 \leq \eta(x) \leq 1$ and we have, for a.e.\ $x \in \Omega$,
\[
|\nabla\eta(x)| = \begin{cases}
	0 &\text{if $\dist(x, \mathcal{N}) < r/2$ or $\dist(x, \mathcal{N}) > r$}\\
	\frac{2}{r} &\text{if $\dist(x, \mathcal{N}) \in [r/2,r]$}
\end{cases}
\]
We claim that, for $r>0$ small enough, the function $\mathbf{u} \eta \in H^1_0(\Omega)$ is an admissible competitor for the functional with distance constraint greater than or equal to $0$, and moreover that
\[
	J(\mathbf{u} \eta) \leq J(\mathbf{u}) + C r \mathcal{H}_{N-1}(\mathcal{N}).
\]
Indeed, by construction we immediately see that, for any $i \neq j$,
\[
	\dist(\supp (u_i \eta), \supp (u_j \eta)) \geq r.
\]
Thus $\mathbf{u} \eta$ is an admissible competitor for $c_r$. In order to estimate the energy
\[
	J(\mathbf{u} \eta) = \sum_{i=1}^k \frac{\displaystyle \int_{\Omega_i} |\nabla (u_i \eta)|^2}{\displaystyle \int_{\Omega_i} |u_i \eta|^2}
\]
we proceed separately for each component. We start with the denominator corresponding to the function $u_i \eta$, for which we can write
\[
	\int_{\Omega_i} |u_i \eta|^2 = \int_{\Omega_i} |u_i|^2 - \int_{\Omega_i \cap \mathcal{N}_r} |u_i|^2 (1 -|\eta|^2) = 1 - \int_{\Omega_i \cap \mathcal{N}_r} |u_i|^2 (1 -|\eta|^2) 
\]
Since $|\nabla u_i|\leq K$, we find that
\[
	|u_i(x)| \leq \dist(x, \mathcal{N}) K \qquad \forall x \in \Omega
\]
thus, recalling that $0\leq\eta(x)\leq 1$ for any $x\in\Omega$, we can carry on with the estimate as follows
\[
	\int_{\Omega_i \cap \mathcal{N}_r} |u_i|^2 (1 -|\eta|^2) \leq \int_{\Omega_i \cap \mathcal{N}_r } \dist(x, \mathcal{N})^2 K^2 \leq K^2 r^2 \left| \Omega_i \cap \mathcal{N}_r \right|.
\]
Finally we find that, for $r>0$ small
\[
	\left(\int_{\Omega_i} |u_i \eta|^2 \right)^{-1} \leq 1 + 2 K^2 r^2 \left| \Omega_i \cap \mathcal{N}_r \right|.
\]
Concerning the numerator of the Rayleigh quotient, we get
\[
\begin{split}
	\int_{\Omega_i} |\nabla (u_i \eta)|^2 &= \int_{\Omega_i} |\nabla u_i|^2 + \int_{\Omega_i \cap \mathcal{N}_r} (|\nabla(u_i\eta)|^2  - |\nabla u_i|^2)\\
	&= \lambda_1(\Omega_i) + \int_{\Omega_i \cap \mathcal{N}_r} (|\nabla u_i|^2 (|\eta|^2-1) + |u_i|^2|\nabla \eta|^2 + 2 u_i \eta \nabla u_i \cdot \nabla \eta) \\
	&\leq \lambda_1(\Omega_i) + \int_{\Omega_i \cap \mathcal{N}_r} (|u_i|^2|\nabla \eta|^2 + 2 u_i \eta \nabla u_i \cdot \nabla \eta).
\end{split}
\]
We estimate the two remaining terms separately. For the first one we have
\[
	\int_{\Omega_i \cap \mathcal{N}_r} |u_i|^2|\nabla \eta|^2 \leq \int_{\Omega_i \cap \{r/2 < \dist(x,\mathcal{N}) < r\} } \dist(x, \mathcal{N})^2 K^2 \frac{4}{r^2} \leq 4 K^2 \left| \Omega_i \cap \mathcal{N}_r \right|.
\]
For the second one, in a similar fashion, we obtain
\[
\begin{split}
	\int_{\Omega_i \cap \mathcal{N}_r} 2 u_i \eta \nabla u_i \cdot \nabla \eta &\leq \int_{\Omega_i \cap \mathcal{N}_r} 2 u_i |\nabla u_i| |\nabla \eta| \\
	&\leq  \int_{\Omega_i \cap \{r/2 < \dist(x,\mathcal{N}) < r\} } 2 \dist(x, \mathcal{N}) K^2 \frac{2}{r} \leq 4 K^2 \left| \Omega_i \cap \mathcal{N}_r\right|.
\end{split}
\]
As a result, recollecting the two estimates, we obtain
\[
	\int_{\Omega_i} |\nabla (u_i \eta)|^2 \leq \lambda_1(\Omega_i) +  8 K^2 \left| \Omega_i \cap \mathcal{N}_r\right|.
\]

We can control the Rayleigh quotient of $u_i\eta$ by combining the previous estimates. We get
\[
	\frac{\displaystyle \int_{\Omega_i} |\nabla (u_i \eta)|^2}{\displaystyle \int_{\Omega_i} |u_i \eta|^2} \leq \lambda_1(\Omega_i) +  C K^2 \left| \Omega_i \cap \mathcal{N}_r \right| + R(r)
\]
where $C$ is a constant independent of $r$ and $R(r)$ is a remainder term of higher order. Summing up in $i$ we find
\begin{multline*}
	J(\mathbf{u} \eta) = \sum_{i=1}^k \frac{\displaystyle \int_{\Omega_i} |\nabla (u_i \eta)|^2}{\displaystyle \int_{\Omega_i} |u_i \eta|^2} \leq \sum_{i=1}^k \lambda_1(\Omega_i) + C K^2 \sum_{i=1}^k \left| \Omega_i \cap \left\{ \dist(x,\mathcal{N}) < r\right\} \right| \\
	\leq J(\mathbf{u}) + CK^2 |\mathcal{N}_r| \leq  J(\mathbf{u}) + C 2r \mathcal{H}_{N-1}(\mathcal{N}).
\end{multline*}

To conclude, it suffices to remark that, since $\mathbf{u}\eta$ is an admissible competitor for $c_r$, 
\[
	c_r \leq J(\mathbf{u} \eta) \leq J(\mathbf{u}) + C 2r \mathcal{H}_{N-1}(\mathcal{N}) = c_0 + C 2r \mathcal{H}_{N-1}(\mathcal{N}). \qedhere
\]
\end{proof}

\begin{proof}[Proof of Corollary \ref{coro:unif Lip}]
Let $\mathbf{u}_r$ be a minimizer for problem $c_r$, with $r>0$ sufficiently small. From Theorem \ref{thm:unif Lip}, Lemma \ref{eq:estimate} and the fact that $\mathbf{u}_r|_{\partial \Omega}=0$, there exists $\mathbf{u}\in \mathrm{Lip}(\overline \Omega)$ such that, up to a subsequence,
\[
\mathbf{u}_r\to \mathbf{u}\quad \text{ weakly in $H^1(\Omega)$, strongly in $C^{0,\alpha}(\overline \Omega)$}.
\]
This implies that $\|u_{i,0}\|_{L^2(\Omega)} =1$ for every $i$, $\int_{\Omega} u_{i,0}^2 u_{j,0}^2 =0$ and $\dist(\supp u_{i,0}, \supp u_{j,0}) =0$ for every $i \neq j$; thus, $\mf{u}_0$ is an admissible competitor for $c_0$. Moreover, since $\|\nabla u_{i,0}\|_{L^2(\Omega)}\leq \liminf_{r\to 0} \|\nabla u_{i,r}\|_{L^2(\Omega)}$ for every $i$,
\[
c_0=\lim_{r\to 0} c_r = \lim_{r\to 0} \sum_{i=1}^k\int_{\Omega} |\nabla u_{i,r}|^2\geq \sum_{i=1}^k\int_{\Omega} |\nabla u_{i,0}|^2\geq c_0,
\]
which shows that $\mathbf{u}_0$ achieves $c_0$, and that $\mathbf{u}_r\to \mathbf{u}_0$ strongly in $H^1_0(\Omega)$.
\end{proof}

\begin{remark}
	The proof of Theorem \ref{thm:unif Lip 2} (the case of singularly perturbed harmonic maps with distance constraint) follows by similar arguments, with few differences (for instance the corresponding results in Section \ref{sec: pre} are much easier to prove). In particular, functions $\mathbf{u}_r$ are not zero on $\partial \Omega$ and we cannot achieve the first conclusion of Lemma \ref{lem ratio}, i.e., the sequence $\{x_n\}$ may accumulate at $\partial \Omega$. To circumvent this issue one can reason with the family of functions $\{\mathbf{u}_r \eta\}$, where $\eta \in C^\infty_0(\Omega)$ is a positive smooth cutoff. We refer to \cite{SoZi} for further details. This is the reason why the uniform estimate in Theorem \ref{thm:unif Lip 2} is only of local type (true in any compact $K \subset \Omega$).
\end{remark}


\end{document}